\documentclass[12pt]{amsart}
\listfiles
\usepackage[all,cmtip]{xy}
\usepackage{amsmath,amscd,amsthm,amsfonts, amssymb,amsxtra}
\usepackage{graphicx}
\usepackage{hyperref}
  \hypersetup{colorlinks=true,citecolor=blue}
  \usepackage[font=scriptsize]{caption} 
\usepackage{tikz}
\usetikzlibrary{cd,decorations.markings}
\usepackage{subcaption}
\captionsetup{compatibility=false}
\CDat
\SelectTips{cm}{}
\newtheorem{thm}{Theorem}[section]
\newtheorem*{un-no-thm}{Theorem}
\newtheorem{cor}[thm]{Corollary}     
\newtheorem{lem}[thm]{Lemma}         
\newtheorem{prop}[thm]{Proposition}


\newtheorem{hypo_no}[thm]{Hypothesis}


\newtheorem{bigthm}{Theorem}

\theoremstyle{definition}
\newtheorem{defn}[thm]{Definition}   

\theoremstyle{definition}

\theoremstyle{definition}

\theoremstyle{definition}

\theoremstyle{remark}
\newtheorem{rem}[thm]{Remark}

\newtheorem*{acks}{Acknowledgements}
\newtheorem*{out}{Outline}
\newtheorem{rems}[thm]{Remarks}

\newtheorem{ex}[thm]{Example}



\newcommand{\mM}{\mathcal M}

\begin{document}
\title[Fluctuations of cycles]{Fluctuations of cycles\\ in a finite CW complex}

\date{\today}

\author[M.~J.~Catanzaro]{Michael J.\ Catanzaro}
\address{Department of Mathematics, University of Florida, Gainesville, FL 32611}
\email{catanzaro@math.ufl.edu}
\author[V.~Y.~Chernyak] {Vladimir Y.\ Chernyak}
\address{Department of Chemistry, Wayne State University, Detroit, MI 48202}
\email{chernyak@chem.wayne.edu}
\author[J.~R.~Klein]{John R.\ Klein}
\address{Department of Mathematics, Wayne State University, Detroit, MI 48202}
\email{klein@math.wayne.edu}

\begin{abstract}
 We use algebraic topology to study
the stochastic motion of cellular cycles in a finite CW
complex. Inspired by statistical mechanics, we introduce a homological
 observable called the {\it average current}. The latter measures the average flux of  the probability in the process. 
In the low temperature, adiabatic limit, we prove that the  average current fractionally 
quantizes, in which the denominators are 
combinatorial invariants of the CW complex.
\end{abstract}

\maketitle
\setlength{\parindent}{15pt}
\setlength{\parskip}{1pt plus 0pt minus 1pt}
\def\bdot{\bold .}
\def\Sp{\bold S\bold p}

\def\vo{\varOmega}
\def\smsh{\wedge}
\def\^{\wedge}
\def\flush{\flushpar}
\def\id{\text{\rm id}}
\def\dbslash{/\!\! /}
\def\codim{\text{\rm codim\,}}
\def\:{\colon}
\def\holim{\text{holim\,}}
\def\hocolim{\text{hocolim\,}}
\def\cal{\mathcal}
\def\Bbb{\mathbb}
\def\bold{\mathbf}
\def\simtwohead{\,\, \hbox{\raise1pt\hbox{$^\sim$} \kern-13pt $\twoheadrightarrow \, $}}
\def\codim{\text{\rm codim\,}}
\def\stableto{\mapstochar \!\!\to}
\let\Sec=\S
\def\Z{\mathbb Z}
\def\Q{\mathbb Q}
\def\R{\mathbb R}
\def\E{\mathbb E}
\newcommand{\lra}{\longrightarrow}

\setcounter{tocdepth}{1}
\tableofcontents

\section{Introduction \label{sec:intro}}

The  interplay  between  dynamical systems and other branches of mathematics
is more than a century old. One of the early prototype results in differential topology, the Poincar\'e-Hopf theorem, equates the Euler 
characteristic of a compact smooth manifold with the enumeration of zeros of 
a generic vector field. In the 1930s, Marston Morse 
generalized this result, in what came to be known as the Morse inequalities, using {\it gradient dynamics}. 
A more recent interaction stems from {\it Hamiltonian dynamics.} The latter 
has inspired insights in the study of symplectic manifolds, 
enumerative geometry, string theory and algebraic topology. 

The scope of this paper is to erect yet another scaffold, one that will link
the fields of {\it stochastic dynamics,} enumerative combinatorics and
algebraic topology. Our investigation concerns the random motion of cellular
cycles in a finite CW complex.  Along the way, a higher dimensional analog of
electrical current will be defined as a homological observable for the random
process. The observable is subsequently used to relate the process to the
algebraic topology of the CW complex.  


To fix our ideas,
consider a finite, connected CW complex $X$ of dimension $d$. 
The first step of the program will be to associate to $X$, together with
auxiliary data, a continuous time Markov chain, which will hereafter be
referred to as a  {\it Markov CW chain}.  The latter will be a Markov process
in which a {\it state} is given by a $(k-1)$-cellular cycle in $X$ within a fixed
integer homology class. Such cycles are
constrained to evolve stochastically by jumping across  $k$-cells, i.e.,
``elementary homologies,'' where  each such jump adds the boundary of a
$k$-cell with a prescribed weight.  Our definition reduces in dimension one to
the notion of a time-dependent biased random walk on a graph, 
where an initial 0-cycle evolves by
jumping across 1-cells.  For now, we assume $k = d$. This represents no loss in
generality as we can replace $X$ by its $k$-skeleton, if necessary.



It is often convenient to represent the states and transitions of a Markov chain by
a {\it state diagram}. This is a directed topological graph with a vertex for
each state, where an edge corresponds to a transition between states.  The
state diagram for a Markov CW chain takes some care to define. To avoid tedium in this introduction, we will settle for an impressionistic description, referring the reader
to Definition \ref{defn:cycle_incidence} for the remaining details.  

As outlined above, the homology class of the
states of the system is fixed with respect to time evolution. 
Hence, we initially postulate that the allowed states of a Markov CW chain are the cellular $(d-1)$-cycles in $X$ over the integers that are 
homologous to a fixed initial cycle $z_0$. In other words, the set of allowed states is the coset 
\[
z_0+ B_{d-1}(X;\Bbb Z)\, ,
\]
where $B_{d-1}(X;\Bbb Z)$ is the abelian group of cellular $(d-1)$-boundaries.

Roughly, a transition from a state $z$ to a state $z'$ requires a choice
of $d$-cell $\alpha$ and a choice of $(d-1)$-cell $f$ such that
\[
  \langle z, f \rangle \neq 0 \quad \text{ and } \quad
  \langle \partial \alpha, f \rangle \neq 0 \, \, ,
\]
where $\langle -,- \rangle \in \Z$ denotes the incidence number. 
Furthermore, the target state $z'$ is obtained from $z$ by adding $\partial\alpha$, the boundary of $\alpha$, with
the proper coefficient $u \in \Bbb Z$:
\[
z' = z + u\partial \alpha
\]
(see Definition~\ref{defn:cycle_incidence} for the precise
statement). 

For processes on finite CW complexes $X$ with $d>1$, there will generally be an
infinite number of states, but only finitely many are directly accessible from
any given state by a single transition. Consequently, the state diagram is a
locally finite graph which may be globally infinite.  Furthermore, the states which are
inaccessible from $z_0$ by a sequence of elementary transitions are considered
as being decoupled from the process; for this reason we will omit
them.

The rate at which a cycle evolves is controlled by external data which we now
describe. Fix a real number $\beta >0$ which is to be interpreted as inverse temperature.
Label each $(d-1)$-cell $f$ by a real number $E_f$ and label each $d$-cell
$\alpha$ by a real number $W_{\alpha}$.  In the discrete-time case, a Markov CW
process is then described by labelling the above transition from $z$ to $z'$ with
the transition rate
\[
e^{\beta(E_f - W_\alpha)}\, .
\]
However, more interesting phenomena arise when considering
the continuous time case, in which 
the numbers $E_f$ and $W_\alpha$ are permitted
to vary in periodic, 1-parameter families.
Specifically,
the set of all such  labels $(E_f,W_\alpha)$ 
varying over all $(f,\alpha) \in X_{d-1}\times X_d$ 
will be  called the  {\it space of parameters}; denote it
by $\mM_X$. Then $\mM_X$ is a real vector space of dimension $|X_{d-1}| + |X_d|$, where
$X_k$ is the set of $k$-cells of $X$.  

A {\it driving protocol} is a smooth map
\[
\lambda\:\Bbb R \to \mM_X\, ,
\]
i.e.,  a 1-parameter family of labels $(E_f,W_\alpha)$
for every $(f,\alpha) \in X_{d-1}\times X_d$.

For a real number $\tau_D > 0$, we
say that $\lambda$ is {\it $\tau_D$-periodic} if 
$\lambda(t) = \lambda(t + \tau_D)$ for every $t  \in
\R$. In this case, $\lambda$ amounts to a choice of  pair
\[
(\tau_D,\gamma)\, ,
\]
where $\gamma\:[0,1] \to \mM_X $ is the smooth loop defined by
$\gamma(t) := \lambda(\tau_Dt)$. 

Given a $\tau_D$-periodic driving protocol,
there is a continuous-time Markov CW process that evolves 
by a (backward) Kolmogorov equation
\begin{equation}
\label{eqn:gen_master}
p'(t) \,\, =\,\,  -\tau_D \cal H(t) p(t) \, , \quad p(0) = z_0 \, ,
\end{equation}
where $p(t)$ is a one parameter family of
distributions (i.e., real-valued 0-cochains) on the state space.
The time-dependent operator $\cal H$ is 
the infinitesimal generator or transition rate matrix 
of the process (also known as the master operator or Fokker-Planck operator); 
it  is determined by the driving protocol
 $\gamma$  and inverse temperature $\beta >0$ (for the details see \S\ref{sec:process}). The  presence of the factor
 $\tau_D$ in equation \eqref{eqn:gen_master} is the result of time rescaling
$t\mapsto \tau_Dt$.

The differential
equation~\eqref{eqn:gen_master} is also known as the {\it master equation}. It has a formal solution $\varrho(t)$, which is
unique once an initial value $\varrho(0)$ is  specified.
Taking the weighted sum defined by $\varrho$, we obtain the  {\it expectation} 
\begin{equation} \label{eqn:intro-expect}
\Bbb E[\varrho]  = \sum_z \varrho(z)z\, ,
\end{equation}
(also known as the  {\it first moment}).
We will show that the expression \eqref{eqn:intro-expect}, which is typically infinite, always converges to a well-defined element of the group of real $(d-1)$-cycles $Z_{d-1}(X;\Bbb R)$.
 Applying 
the {\it biased coboundary operator} 
\[
\partial^\ast_{E,W} = e^{-\beta W}\partial^\ast e^{\beta E}\: C_{d-1}(X;\Bbb R) 
\to C_d(X;\Bbb R)
\]  
(cf.\ ~\eqref{eqn:biased_div} below; here $C_\ast(X;\Bbb R)$ denotes the real cellular chain complex of $X$) to $\Bbb E[\varrho]$  and integrating, we  
obtain a real cellular $d$-chain
\begin{equation} \label{eqn:avg-density}
Q := \int_0^1 \partial_{E,W}^\ast \Bbb E[\varrho]\, dt\, ,
\end{equation}
which can be viewed as the {\it average current} of the process; 
it depends on the triple $(\beta,\tau_D,\gamma)$.
The explanation for the terminology is that $\partial_{E,W}^\ast \Bbb E[\varrho] $ 
measures the flux of the expected value,
and the displayed integral is just the average value over time of this flux. 
 
When the driving time $\tau_D$ is sufficiently large, 
the expected value $\Bbb E[\varrho(t)]$  will be 1-periodic (cf.~Theorem \ref{bigthm:adiabatic}). Then 
$Q$ is a real $d$-cycle for large $\tau_D$ (by Theorem \ref{bigthm:expectation} below and the fundamental theorem of calculus). 
 Since $X$ has dimension $d$, there are no $d$-boundaries, so
 the group of real $d$-cycles coincides with the homology group  $H_d(X;\Bbb R)$.
Consequently, the average current will be a real homology class:
\begin{equation} \label{eqn:avg-density-homology}
Q  \in H_d(X;\Bbb R)\, ,
\end{equation}
which we view as a characteristic class for the Markov CW chain. 
Summarizing, we have associated an observable to any Markov CW chain that arises from periodic driving.
The current work investigates the properties of this homology class.

\subsection{Motivation and related work}

This paper is an  extension of the program
introduced in~\cite{Chernyak:algtop} to higher dimensions. A topological study
of continuous time random walks on graphs was performed there, and
an explicit result regarding the long time behavior of trajectories
was obtained. The results of that paper were proved using Kirchhoff's theorems on the  
flow of current in an electrical circuit.
While the motivations of the papers are similar, 
the generalization to higher dimensions introduces
formidable technicalities.

Other authors have considered generalizations of random walks to higher
dimensional simplicial complexes. Parzanchevski and Rosenthal~\cite{Parzan:SC}
define the {\it ($p$-lazy) $k$-walk} to be a Markov particle process on the set
of $k$-simplices of a simplicial complex. In their setup, a $k$-simplex
transitions to another $k$-simplex
through a co-face, and they relate this to the `up-down'
component of the $k$-Laplacian.  Mukherjee and Steenbergen~\cite{Mukherjee:RW}
consider a stochastic process where a $k$-simplex transitions to another $k$-simplex via a
face in the simplicial complex. The latter is related to the `down-up'
component of the Laplacian, and the two processes are dual to one another.
Rosenthal~\cite{Rosenthal:SBRW} also defined a {\it simplicial branching random
walk} (SBRW), which modifies the $k$-walk so that a $k$-simplex transitions to all
neighbors of an adjacent $(k+1)$-cell instead of a single neighbor.  Our notion of
Markov CW chain is closest to the SBRW of~\cite{Rosenthal:SBRW},
but is still distinct. 

The continuous time Markov chain considered in this paper is both
time-inhomogeneous and
not uniform (due to non-trivial values of $E$ and $W$). 
Even if we restrict to the embedded discrete time process, 
take trivial weights, and use a simplicial complex in which 
we collapse out the $(k-2)$-skeleton (so every $(k-1)$-cell is a cycle), 
our process is still distinct from the $k$-walk and the SBRW.
In the $k$-walk of~\cite{Parzan:SC} and~\cite{Mukherjee:RW},
a simplex transitions to a single neighbor, instead of all adjacent
neighbors as in the Markov CW chain.
In the language of~\cite{Rosenthal:SBRW}, the SBRW treats each of the $z_b=\langle z,b\rangle$ 
`particles' on a $k$-cell $b$ independently. This is in contrast to
the Markov CW chain, where all $z_b$ `particles' move 
together (see Figure~\ref{fig:intro_example}).

Our definition of a Markov CW chain is derived from the notion 
of a Langevin process in statistical mechanics. We 
imagine a smooth, compact, Riemannian manifold $(M,g)$ together with two
additional pieces of data.  The first is a Morse function $f\: M \to \R$ 
on a compact Riemannian manifold that satisfies 
the Morse-Smale transversality condition. Hence,
the Morse-Smale chain complex is defined.  The second is a stochastic
vector field on $M$ that possesses Gaussian and Markovian statistics. From 
a statistical mechanics perspective, the stochastic vector field arises 
from coupling our dynamical system to a `bath,' i.e., another dynamical
system with an enormous number of degrees of freedom.

These two ingredients can be used to define a stochastic flow
on $M$ (see e.g., \cite{Kunita:flows}).  An initial embedded $k$-dimensional submanifold
will then evolve according to the appropriate stochastic differential equation,
also known as a Langevin equation.  The setting of this manuscript is concerned
with the low-noise limit of these continuous processes under which the
stochastic motion becomes more deterministic and is restricted to the
associated Morse CW decomposition of $M$ given by the unstable manifolds of
$f$ (cf.\ \cite{Qin:CW}). In this way, the CW complexes considered here originate from
the Morse-Smale CW decompositions of smooth manifolds (this is the basis
for Hypothesis~\ref{hypo:decomp}).  It is worth noting that the smooth
setting is what distinguishes our process from those already  appearing in the literature. The embedded
submanifold is a $k$-cycle in bordism homology, forcing the state space 
to consist of $k$-cycles instead of $k$-cells. 
If the stochastic diffeomorphism pushes a
portion of the $k$-cycle $z$ over a $(k+1)$-cell and onto all adjacent $k$-cells,
it must do so uniformly. That is, all $z_b = \langle z, b \rangle$ `particles' on a $k$-cell 
$b$ move together, not independently.

\begin{figure}[h]
  \centering
  \begin{tikzpicture}
    \draw (0,0) -- (0,1) -- (1,2) -- (2,1) -- (2,0) -- (0,0);
    \draw [thick,red] (-0.3,-0.3) -- (0,0);
    \draw [thick,red] (0,0) -- (0,1) node [midway,left] {\tiny $j$};
    \draw (0,0) -- (2,0) node [midway,below] {\tiny $0$};
    \draw (2,0) -- (2,1) node [midway,right] {\tiny $0$};
    \draw (2,1) -- (1,2) node [midway,above] {\tiny $0$};
    \draw (1,2) -- (0,1) node [midway,above] {\tiny $0$};
    \draw[thick,red] (0,1) -- (-0.3, 1.3);
    \node [label={[red,rotate=140]$\ldots$}] at (-0.45,1.5) {};
    \node [label={[red,rotate=-140]$\ldots$}] at (-0.65,-0.55) {};
  \end{tikzpicture}
  \raisebox{3.5em}{$\Rightarrow$}
  \hspace{-1em}
  \begin{tikzpicture}
    \draw (0,0) -- (0,1) -- (1,2) -- (2,1) -- (2,0) -- (0,0);
    \draw[thick,red] (-0.3,-0.3) -- (0,0);
    \draw[thick,red] (0,1) -- (-0.3,1.3);
    \node [label={[red,rotate=140]$\ldots$}] at (-0.45,1.5) {};
    \node [label={[red,rotate=-140]$\ldots$}] at (-0.65,-0.55) {};
    \draw[thick,red] plot [smooth, tension=1.2] coordinates{(0,0) (0.7,0.6) (0,1)};
    \draw[thick,red] plot [smooth,tension=0.9] coordinates{(0,0) (1.1,0.2) (1.5,0.6) (1.1,1.3) (0,1)};
    \node [label={[red,scale=0.8,rotate=20]$\rightarrow$}] at (0.4,0.2) {};
    \node [label={[red,scale=0.8,rotate=15]$\rightarrow$}] at (1.2,0.4) {};
    \node [label={[red,scale=0.8,rotate=90]$\rightarrow$}] at (1.15,1.5) {};
    \node [label={[red,scale=0.8,rotate=-15]$\rightarrow$}] at (1.7,0.2) {};
  \end{tikzpicture}
  \raisebox{3.5em}{$\Rightarrow$}
  \hspace{-1em}
  \begin{tikzpicture}
    \draw (0,1) -- (1,2) -- (2,1) -- (2,0) -- (0,0);
    \draw (0,0) -- (0,1) node [midway,left] {\tiny $0$};
    \draw [thick,red] (-0.3,-0.3) -- (0,0);
    \draw[thick,red] (0,0) -- (2,0) node [midway,below] {\tiny $j$};
    \draw[thick,red] (2,0) -- (2,1) node [midway,right] {\tiny $j$};
    \draw[thick,red] (2,1) -- (1,2) node [midway,above] {\tiny $j$};
    \draw[thick,red] (1,2) -- (0,1) node [midway,above] {\tiny $j$};
    \draw[thick,red] (0,1) -- (-0.3, 1.3);
    \node [label={[red,rotate=140]$\ldots$}] at (-0.45,1.5) {};
    \node [label={[red,rotate=-140]$\ldots$}] at (-0.65,-0.55) {};
  \end{tikzpicture}
  \caption{The motivation for an elementary transition of the Markov CW chain. 
    The initial cycle has incidence $j$ with one face of a 2-cell. The stochastic vector field
  pushes the cycle off the face and across the entire 2-cell. Only the first and last pictures
  take place in the process on the CW complex; the intermediate figure is the smooth manifold picture
  which motivates our definition.}
  \label{fig:intro_example}
\end{figure}
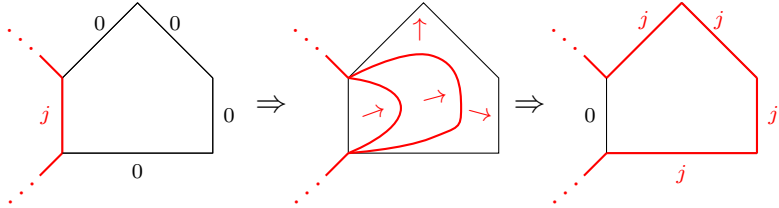

\subsection{Statement of Results}
\label{sec:statement_of_results}

For the motion of points on both graphs and smooth
manifolds, the Fokker-Planck operator takes the form of a
biased Laplacian~\cite{Gardiner:SM, Ikeda:SDE}. Suitably understood, the master operator will also be a biased Laplacian.
However, there are analytical difficulties in working
with the master equation directly, 
as the space of distributions on the set of states is typically  
infinite dimensional.
Fortunately, the expectation
of the formal solution of the master equation
 also satisfies a Kolmogorov-type dynamical equation in which 
 the dynamical operator acts
 on the finite dimensional vector space of cellular $(d-1)$-chains
 $C_{d-1}(X;\Bbb R)$, as we will now explain.

The {\it (reduced) biased Laplacian} $H\: C_{d-1}(X;\Bbb R) \to C_{d-1}(X;\Bbb R)$
is the operator given by
\[
\partial \partial_{E,W}^\ast = \partial e^{-\beta W}\partial^\ast e^{\beta E}\, .
\]
Note that $H$ is defined in terms of $\gamma$ and $\beta$; in particular $H$ is time-dependent.
The {\it dynamical equation} is given by
\begin{equation}
  \dot q  = -\tau_DH q\, .
\label{eqn:dynamical_eqn_gen}
\end{equation}

\begin{bigthm}[Expectation Dynamics]
  \label{bigthm:expectation} Let $\varrho(t)$ be the formal
solution of the master equation \eqref{eqn:gen_master}  
with initial value $\varrho(0) = z_0$. Then its expectation
  \[
    \rho(t) := \E[\varrho(t)]
  \]
is the unique solution to the dynamical equation \eqref{eqn:dynamical_eqn_gen}
with respect to the initial condition $\rho(0) = z_0$.
\end{bigthm}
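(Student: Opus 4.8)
The plan is to transport the infinite-dimensional master equation \eqref{eqn:gen_master} across the first-moment map and recognize the output as the finite-dimensional dynamical equation \eqref{eqn:dynamical_eqn_gen}. Write a distribution on the state space as a real $0$-cochain $\nu$ and set $\E[\nu] = \sum_z \nu(z)\,z$, as in \eqref{eqn:intro-expect}; since every state $z$ is a cellular $(d-1)$-cycle, $\E[\nu]$ lies in $Z_{d-1}(X;\R)$ whenever it converges, and the point mass at $z_0$ is carried to $z_0$. The engine of the proof is the \emph{intertwining identity}
\begin{equation*}
\E\bigl[\mathcal H(t)\,\nu\bigr] \;=\; H(t)\,\E[\nu],
\end{equation*}
to be checked first for finitely supported $\nu$. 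Granting it, apply $\E$ to \eqref{eqn:gen_master} and differentiate under the sum: $\dot\rho(t) = \E[\dot\varrho(t)] = -\tau_D\,\E[\mathcal H(t)\varrho(t)] = -\tau_D\,H(t)\,\rho(t)$, with $\rho(0) = \E[\varrho(0)] = z_0$, so $\rho$ solves \eqref{eqn:dynamical_eqn_gen}.

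The intertwining identity is proved by a direct bookkeeping computation unwinding Definition \ref{defn:cycle_incidence}. Writing $\mathcal H$ in its probability-conserving form — off-diagonal entries the negatives of the transition rates, diagonal entries the total escape rates — one regroups the (finite) double sum defining $\E[\mathcal H\nu]$ so that each transition $z\to z'$ contributes its rate times its increment $z' - z$. Such a transition is indexed by a $d$-cell $\alpha$ and a $(d-1)$-cell $f$ with $\langle z,f\rangle\neq 0$ and $\langle\partial\alpha,f\rangle\neq 0$, its increment is $u\,\partial\alpha$, and its rate is a multiple of $e^{\beta(E_f - W_\alpha)}$; by design, the weight $u$ and the rate of \ref{defn:cycle_incidence} are calibrated so that their product equals, up to a universal sign, $e^{\beta(E_f - W_\alpha)}\langle\partial\alpha,f\rangle\,\langle z,f\rangle$. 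Summing over all valid pairs $(\alpha,f)$ and recognizing $\sum_z \nu(z)\langle z,f\rangle$ as the coefficient of $f$ in $\E[\nu]$, the $e^{\beta E_f}$-factor, the $\langle\partial\alpha,f\rangle$-factor, the $e^{-\beta W_\alpha}$-factor, and the final $\partial\alpha$ assemble $\E[\mathcal H\nu]$ into $\partial\,e^{-\beta W}\,\partial^\ast\,e^{\beta E}$ applied to $\E[\nu]$, i.e. $H(t)\E[\nu]$. It is essential here that the states are cycles, so that pairing $z$ against the $(d-1)$-cochains reproduces the chain $z$ and the identity survives composition with $\partial^\ast$ and $\partial$.

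Two analytic points must then be cleared. First, $\E[\varrho(t)]$ must actually converge — i.e. each of the finitely many coefficients $\sum_z \varrho(z,t)\langle z,f\rangle$ is finite — and, second, the series may be differentiated term by term in $t$, with the $t$-derivative series (controlled via $\dot\varrho = -\tau_D\mathcal H\varrho$ and the local, banded structure of $\mathcal H$) converging locally uniformly. Both reduce to showing that $\varrho(t)$, obtained from the point mass at $z_0$ by the time-ordered exponential of $-\tau_D\mathcal H$, is a bona fide probability distribution whose mass propagates through the locally finite state diagram with summable tails; dominated convergence then licenses pushing $\tfrac{d}{dt}$ through the sum. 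I expect this tameness estimate — non-explosion together with finiteness of the first moment — to be the main obstacle, and it is the only place where the infinitude of the state space genuinely intervenes; it is also what underlies the assertion, recorded after \eqref{eqn:intro-expect}, that $\E[\varrho(t)] \in Z_{d-1}(X;\R)$.

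Uniqueness is then immediate. Equation \eqref{eqn:dynamical_eqn_gen} is a linear ODE on the finite-dimensional vector space $C_{d-1}(X;\R)$ whose coefficient $H(t) = \partial\,e^{-\beta W(\gamma(t))}\,\partial^\ast\,e^{\beta E(\gamma(t))}$ depends smoothly — hence continuously and locally Lipschitz-ly — on $t$, so the Picard--Lindel\"of theorem supplies a unique global solution through any prescribed initial value. Since $\rho$ is a solution with $\rho(0) = z_0$, it is \emph{the} solution, which is the claim.
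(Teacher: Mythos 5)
Your proposal is correct and follows essentially the same route as the paper: the intertwining identity $\E[\mathcal H p]=H\,\E[p]$ is the paper's Lemma~\ref{lem:H&H}, the convergence/first-moment tameness you flag is handled in Proposition~\ref{prop:prob-measure} and Lemma~\ref{lem:rho-converge} (via the bounded-valence, path-length filtration estimate $\sum_{u(z)=n}\varrho_t(z)\|z\|\le e^{w}w^{n}/n!$), and uniqueness comes from the same finite-dimensional linear ODE argument. No substantive differences to report.
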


Theorem   \ref{bigthm:expectation} is the cornerstone of our investigation:
it relates the evolution of a state of the process
 to the evolution of its first moment. The latter is more directly related to the
 topology of the CW complex $X$.
 
The next step of the program is to analyze $\rho$ under two limits on the process.  The
first of these is the {\it adiabatic limit}, in which $\tau_D \to \infty$.  The term
``adiabatic'' appreciates the sufficiently slow variation of the parameters.


\begin{bigthm}[Adiabatic Theorem] \label{bigthm:adiabatic}
There exists a positive real number $\tau_0 = \tau_0(\beta, \gamma)$ 
such that for all $\tau_D > \tau_0$, a
1-periodic solution $\rho_{\tau_D}$ of the dynamical equation
\eqref{eqn:dynamical_eqn_gen}
 exists and is unique. Furthermore,
 \[
   \lim_{\tau_D \to \infty} \rho_{\tau_D} = \rho^B\, ,
 \]
 where $\rho^B  = \rho^B(\gamma,\beta)$ is the Boltzmann distribution at $[z_0]\in H_{d-1}(X;\Bbb R)$ (cf.~Definition~\ref{defn:Boltzmann_dist}).
\end{bigthm}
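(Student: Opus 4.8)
The plan is to restrict the dynamical equation~\eqref{eqn:dynamical_eqn_gen} to the affine subspace $z_0 + B_{d-1}(X;\Bbb R)$, reduce it there to an inhomogeneous linear ODE on the \emph{fixed} finite-dimensional space $B_{d-1} := B_{d-1}(X;\Bbb R)$, and then run a fast-relaxation estimate. First I would record the structure of the biased Laplacian $H(t)$. Since $\im H(t) \subseteq \im\partial = B_{d-1}$, every solution of~\eqref{eqn:dynamical_eqn_gen} has $\dot q(t) \in B_{d-1}$, so $z_0 + B_{d-1}$ is invariant (consistently with Theorem~\ref{bigthm:expectation}: the solution $\rho = \E[\varrho]$ takes values in $z_0 + B_{d-1}$). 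Equip $C_{d-1}(X;\Bbb R)$ with the time-dependent inner product $\langle a,b\rangle_{E(t)} := \langle e^{\beta E(t)}a,b\rangle$, which is positive definite because $e^{\beta E_f}>0$. A one-line computation gives $\langle H(t)a,b\rangle_{E(t)} = \langle e^{-\beta W(t)}\partial^\ast e^{\beta E(t)}a,\ \partial^\ast e^{\beta E(t)}b\rangle$, so $H(t)$ is self-adjoint and positive semidefinite for this product, with $\langle H(t)a,a\rangle_{E(t)} = 0$ exactly when $\partial^\ast e^{\beta E(t)}a = 0$; hence $\ker H(t) = e^{-\beta E(t)}\big(B_{d-1}^\perp\big)$ (standard orthogonal complement). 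A dimension count then forces $\im H(t) = B_{d-1}$, and self-adjointness gives $\big(\ker H(t)\big)^{\perp_{E(t)}} = \im H(t) = B_{d-1}$, so $C_{d-1}(X;\Bbb R) = \ker H(t) \oplus B_{d-1}$; in particular $A(t) := H(t)|_{B_{d-1}}\colon B_{d-1} \to B_{d-1}$ is a positive-definite isomorphism, and compactness of $[0,1]$ together with $1$-periodicity of $t\mapsto(E(t),W(t))$ yields a uniform spectral gap $\langle A(t)b,b\rangle_{E(t)} \ge \mu_0\|b\|^2_{E(t)}$ with $\mu_0 > 0$. Finally, the Boltzmann distribution is the unique point $\rho^B(t)$ of $(z_0 + B_{d-1}) \cap \ker H(t)$, explicitly $\rho^B(t) = z_0 - A(t)^{-1}\big(H(t)z_0\big)$ (using $H(t)z_0 \in B_{d-1}$); it agrees with $\rho^B = \rho^B(\gamma,\beta)$ of Definition~\ref{defn:Boltzmann_dist} and is a smooth $1$-periodic curve.

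Next I would write a solution of~\eqref{eqn:dynamical_eqn_gen} in $z_0 + B_{d-1}$ as $q(t) = \rho^B(t) + r(t)$, $r(t) \in B_{d-1}$; since $H(t)\rho^B(t) = 0$ this becomes $\dot r = -\tau_D A(t)r + g(t)$ on $B_{d-1}$, with $g := -\dot\rho^B$ smooth and $1$-periodic. Let $\Phi_{\tau_D}(t,s)$ be the propagator of $\dot y = -\tau_D A(t)y$. The crux is the energy inequality
\[
\tfrac{d}{dt}\|y\|^2_{E(t)} \;=\; -2\tau_D\langle A(t)y,y\rangle_{E(t)} + \beta\langle e^{\beta E(t)}\dot E(t)\,y,\,y\rangle \;\le\; \big(-2\tau_D\mu_0 + C\big)\|y\|^2_{E(t)},
\]
where the constant $C$ bounding the $\dot E$-term is \emph{independent of $\tau_D$} (again by compactness of $[0,1]$); together with the uniform equivalence of the norms $\|\cdot\|_{E(t)}$ to the standard norm, this gives $\|\Phi_{\tau_D}(t,s)\| \le C' e^{-(\tau_D\mu_0 - C/2)(t-s)}$ for $t \ge s$, so $\|\Phi_{\tau_D}(1,0)\| \to 0$ as $\tau_D \to \infty$. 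A $1$-periodic solution of the reduced equation exists and is unique exactly when $\mathrm{Id} - \Phi_{\tau_D}(1,0)$ is invertible, in which case $r_{\tau_D}(0) = \big(\mathrm{Id} - \Phi_{\tau_D}(1,0)\big)^{-1}\!\int_0^1 \Phi_{\tau_D}(1,s)g(s)\,ds$; hence there is $\tau_0 = \tau_0(\beta,\gamma)$ beyond which the unique $1$-periodic solution $\rho_{\tau_D} = \rho^B + r_{\tau_D}$ in $z_0 + B_{d-1}$ exists. Finally $\big\|\int_0^1\Phi_{\tau_D}(1,s)g(s)\,ds\big\| \le C'\|g\|_\infty \int_0^1 e^{-(\tau_D\mu_0 - C/2)(1-s)}\,ds = O(1/\tau_D)$, so $r_{\tau_D}(0) \to 0$, and then $r_{\tau_D}(t) = \Phi_{\tau_D}(t,0)r_{\tau_D}(0) + \int_0^t\Phi_{\tau_D}(t,s)g(s)\,ds$ gives $\sup_{t\in[0,1]}\|r_{\tau_D}(t)\| \to 0$; that is, $\rho_{\tau_D} \to \rho^B$ uniformly in $t$.

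I expect the main obstacle to be the propagator bound: one must produce a spectral gap for $A(t)$ that is \emph{uniform} in $t$ while simultaneously controlling the non-normality of the nonautonomous flow, which is precisely why the energy estimate has to be carried out in the moving inner product $\langle\cdot,\cdot\rangle_{E(t)}$ rather than a fixed one. A secondary point requiring care is the identification $\big(\ker H(t)\big)^{\perp_{E(t)}} = B_{d-1}$ and the fact that $A(t)$ is an isomorphism; both rest on the strict positivity of the weights $e^{\beta E_f}$ and $e^{\beta W_\alpha}$, which is what renders the relevant quadratic forms definite.
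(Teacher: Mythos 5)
Your proposal is correct and follows essentially the same route as the paper: write the candidate solution as $\rho^B(t)$ plus a correction taking values in $B_{d-1}(X;\R)$, prove exponential decay of the propagator on $B_{d-1}(X;\R)$ with respect to the modified inner product, invert $I-U(1,0)$ for $\tau_D$ large to obtain existence and uniqueness of the $1$-periodic solution, and then bound the correction by $O(1/\tau_D)$. If anything, your energy estimate in the moving inner product $\langle -,-\rangle_{E(t)}$, with the explicit $\dot E$ term, treats the one delicate point more carefully than the paper's Lemma~\ref{lem:evol-bound}, which compares $U(t,t_0)$ with the evolution of a constant operator without addressing the time-dependence of the norm.
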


In particular, Theorem \ref{bigthm:adiabatic} shows that the long time behavior of the process 
is no longer dynamical in nature. Furthermore, the limiting cycle is
given by the Boltzmann distribution~\cite{CCK:Boltzmann}, which can be interpreted as the
unique harmonic form on this class (see Theorem~\ref{thm:Boltzmann}). 
This cycle is a weighted
average over the cycles representing the homology class, so Theorem \ref{bigthm:adiabatic} 
is a kind of ergodic theorem for the expectation of our
Markov process.  

\begin{rem} We reiterate that Theorem~\ref{bigthm:adiabatic}
refers to first moment of the process, whereas the actual process typically
has no limiting distribution (in fact, it can blow up in finite time).
\end{rem}

Recall that the average current $Q$ is defined in 
terms of the parameters $(\tau_D,\gamma,\beta)$. In what follows set $Q = Q_{\tau_D}$
to emphasize its dependence on $\tau_D$.
 Set
\[
Q^B := \lim_{\tau_D \to \infty} Q_{\tau_D} \, .
\]
Theorem \ref{bigthm:adiabatic}  implies that $Q^B$ is well-defined and
depends only on Boltzmann
distribution $\rho^B$. 

The second limit we are interested in is the {\it low temperature limit}, or
low noise limit, under which $\beta \to \infty$.  The main result of this
paper is stated in the low temperature, adiabatic limit. In what follows we
write $Q^B_\beta$ to indicate the dependence of $Q^B$ on $\beta$.

\begin{bigthm}[Quantization]
  \label{bigthm:quant} 
  Assume $X$ is a connected finite CW complex of dimension $d$. For a ``good''
  periodic driving protocol $(\tau_D,\gamma)$, the low temperature, adiabatic
  limit of the average current is well-defined and fractionally quantizes, i.e., there is a
  positive integer $\delta$ such that
  \[
    \lim_{\beta \to \infty} Q^B_\beta
    \in H_d(X;\Z[\tfrac{1}{\delta}]) \subset H_d(X;\R) \, ,
  \]
 Morever, the $\delta$ is a combinatorial invariant of $X$ (cf.\ Theorem~\ref{thm:quantization}).
\end{bigthm}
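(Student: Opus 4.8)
The plan is to use the two limit theorems to reduce the low-temperature, adiabatic limit of the average current to a finite combinatorial sum, and then to show that the terms of that sum are rational $d$-cycles whose denominators divide a single integer $\delta$ read off from the integer cellular boundary matrix $\partial_d$.

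By Theorem~\ref{bigthm:expectation} it suffices to work with the expectation $\rho$ solving the finite-dimensional dynamical equation~\eqref{eqn:dynamical_eqn_gen}, and by Theorem~\ref{bigthm:adiabatic} its $1$-periodic solution $\rho_{\tau_D}$ converges as $\tau_D\to\infty$ to the instantaneous Boltzmann cycle $\rho^B=\rho^B(\gamma,\beta)$. Passing to this limit under the integral sign in~\eqref{eqn:avg-density} presents
\[
Q^B_\beta \;=\; \int_0^1 \partial_{E,W}^\ast\,\rho^B\,dt \;\in\; Z_d(X;\R)=H_d(X;\R),
\]
where $E$, $W$ and $\rho^B$ are evaluated at the protocol point $\gamma(t)$; thus $Q^B_\beta$ is the time-average over one period of the instantaneous Boltzmann flux, a function of the loop $\gamma$ valued in the finite-dimensional space $H_d(X;\R)$. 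For this to be meaningful $\rho^B$ must be defined and unique along $\gamma$, which I expect to be part of what ``good'' means for the pair $(\tau_D,\gamma)$: concretely, that $\gamma$ avoids the locus where the reduced biased Laplacian $H$ fails to be invertible on the space $B_{d-1}(X;\R)$ of $(d-1)$-boundaries, and that $\gamma$ meets the walls introduced below transversally.

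The heart of the matter is the low-temperature limit $\beta\to\infty$, which I would carry out by a tropical, Laplace-type analysis of the above integral. The matrix entries of $H$, and of the operators building $\partial_{E,W}^\ast\rho^B$, are rational functions of the exponentials $e^{\beta E_f}$ and $e^{-\beta W_\alpha}$; grouping monomials according to which one dominates cuts $\mM_X$ into finitely many chambers bounded by hyperplanes with integer coefficients coming from the incidence numbers of $X$ --- for instance the equalities between the weights of the cellular spanning forests that enter the (simplicial) matrix-tree expression for $\rho^B$ (cf.~\cite{CCK:Boltzmann}) --- so the decomposition is purely combinatorial and independent of $\beta$. I would then establish: (i) on the interior of a chamber the instantaneous Boltzmann cycle converges to a rational cycle $\zeta_\sigma\in Z_{d-1}(X;\Q)$ supported on a minimal-weight cellular spanning forest, and the instantaneous flux is asymptotically trivial there (a spanning forest carries no $d$-cycle, so the dominant low-temperature dynamics is reversible), so the bulk of each chamber contributes nothing to $Q^B_\infty:=\lim_{\beta\to\infty}Q^B_\beta$; (ii) the whole of $Q^B_\infty$ is therefore a finite sum of ``residues'', one for each wall crossed by $\gamma$, each determined by the pair of competing minimal forests at that wall. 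The coefficients of the cycles entering a residue solve square integer linear systems whose matrices are submatrices of $\partial_d$ indexed by the surviving cells, so by Cramer's rule these coefficients are ratios of minors of $\partial_d$. Hence every residue, and therefore $Q^B_\infty$, lies in $Z_d(X;\Z[\tfrac{1}{\delta}])=H_d(X;\Z[\tfrac{1}{\delta}])$, where one may take $\delta$ to be the least common multiple of the absolute values of the nonzero maximal minors of $\partial_d$ in the cellular bases --- equivalently, of the torsion orders $|H_{d-1}(\Upsilon)_{\mathrm{tors}}|$ over the cellular spanning forests $\Upsilon$ of $X$. This $\delta$ is manifestly a combinatorial invariant of $X$; when $d=1$ the boundary matrix is totally unimodular, so $\delta=1$ and one recovers the integral quantization of~\cite{Chernyak:algtop}. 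Identifying $\delta$ precisely and recording the resulting formula for $Q^B_\infty$ is the content of Theorem~\ref{thm:quantization}.

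I expect the main obstacle to be the uniform low-temperature asymptotics, and especially the analysis near the chamber walls crossed by $\gamma$: there the leading-order asymptotics degenerate, the surviving-cell linear systems can drop rank, and several minimal forests tie, so each residue must be extracted from a matched asymptotic expansion rather than from a naive leading term. One must also justify the ``asymptotically trivial flux'' claim (i), since that is precisely what prevents irrational, time-fraction-weighted contributions from surviving the limit, and show that the sum of residues depends only on the homotopy class of $\gamma$ (and on $X$), so that $Q^B_\infty$ is genuinely a combinatorial invariant paired with $[\gamma]$. Granting these, the bound on denominators is an immediate consequence of Cramer's rule applied to the relevant submatrices of $\partial_d$.
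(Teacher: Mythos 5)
There is a genuine gap at the very first step, and it is the step the whole paper turns on. Your reduction
\[
Q^B_\beta \;=\; \int_0^1 \partial_{E,W}^\ast\,\rho^B\,dt
\]
is not correct: the current density is ${\mathbf J}=\tau_D\,\partial^*_{E,W}\rho$ (note the factor $\tau_D$ in \eqref{eqn:emp_cur_dens}), and, more fatally, the Boltzmann cycle is \emph{harmonic}: since $\rho^B$ is by construction $\langle-,-\rangle_E$-orthogonal to $B_{d-1}(X;\R)$, one has $\partial^*e^{\beta E}\rho^B=0$, hence $\partial^*_{E,W}\rho^B\equiv 0$. So the integrand you propose to analyze tropically is identically zero, and the adiabatic limit of $Q_{\tau_D}$ is a genuine ``$\infty\cdot 0$'' problem: $\tau_D\to\infty$ while $\partial^*_{E,W}\rho_{\tau_D}\to 0$. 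The paper resolves this by the continuity equation $\partial{\mathbf J}=-\dot\rho$ together with the higher Kirchhoff theorem (Theorem~\ref{thm:Kirchhoff_network}): the orthogonal splitting $\mathcal{A}=\tfrac1\Delta\sum_T w_T\varsigma_T$ of $\partial$ in the $W$-inner product gives ${\mathbf J}=\mathcal{A}(\dot\rho)$ (Lemma~\ref{lem:current-K}), whence $Q^B=\int_0^1\mathcal{A}(\dot\rho^B)\,dt$ as in \eqref{eqn:boltzmann-current-explicit}. This re-expression of the current through the spanning-tree operator $\mathcal{A}$ is the key idea missing from your proposal; without it there is no correct finite quantity to which your chamber/residue analysis can be applied.

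Your combinatorial endgame is in the right spirit (jumps of the low-temperature Boltzmann representative at exchanges of the minimal co-tree, transported by integral $d$-chains, with denominators controlled by torsion of trees/co-trees, i.e.\ by integer linear algebra of $\partial_d$), but two further points diverge from what is actually needed. First, ``good'' does not mean avoiding non-invertibility of the reduced Laplacian --- $H$ is automatically definite on $B_{d-1}(X;\R)$, as the adiabatic proof uses --- it means the loop stays in $\breve{\mathcal M}_X=U\cup V$, i.e.\ at each time either $E$ or $W$ is injective. Second, you defer the hard part (``matched asymptotic expansions near the walls''), whereas the paper's segment decomposition removes it entirely: on type-$U$ pieces $\dot\rho^B\to 0$ uniformly (Proposition~\ref{prop:low_temp_deriveboltz}), so they contribute nothing; on type-$V$ pieces $\mathcal{A}\to\varsigma_{T^\mu}$ uniformly (Lemma~\ref{lem:low_temp_K}) and the fundamental theorem of calculus plus Lemma~\ref{lem:low_temp_boltz} turn the contribution into $\varsigma_{T^\mu}\bigl(\psi_{L^\mu(v)}-\psi_{L^\mu(u)}\bigr)[z_0]$, which lies in $C_d\bigl(X;\Z[\tfrac1{\delta_I}]\bigr)$ with $\delta_I=\theta_{T^\mu}a_{L^\mu(u)}a_{L^\mu(v)}$ by the integral lifting of $\varsigma_T$ and $\psi_L$ (Remark~\ref{rem:D-coeffs}, Lemma~\ref{lem:current_V}). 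To repair your argument you would need to (i) restore the $\tau_D$ factor and prove an analogue of ${\mathbf J}=\mathcal{A}(\dot\rho)$, and (ii) either carry out the wall analysis you postpone or adopt the $U/V$ decomposition, which is exactly what makes the paper's proof elementary.
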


The term ``good'' that appears in Theorem \ref{bigthm:quant} is a generic property: it refers to those driving protocols whose image lies in a suitable open and dense
topological subspace of $\cal M_X$. More precisely, the subspace we take, denoted by
$\breve {\cal M}_X$, is the subset of pairs
$(E,W)$ in which either $E\: X_{d-1} \to \Bbb R$ or $W\: X_d \to \Bbb R$ is one-to-one.

If $d=1$, then $\delta=1$ and Theorem \ref{bigthm:quant}
recovers a version of the integral quantization result of~\cite[thm.~A]{Chernyak:algtop}. In higher dimensions, the appearance of torsion phenomena in the integral homology of
$X$ is partly responsible for the inversion of the number $\delta$.
We consider Theorem \ref{bigthm:quant} to be the main result of this manuscript.

\subsection{An example} For $d \ge 2$, let $c\: S^{d-1}\to S^{d-1} \vee S^{d-1}$
 be the $(d-2)$-fold suspension of the map $S^1 \to S^1 \vee S^1$ which is given by the loop multiplication $xy^{-1}$, where $x,y$ denote the two inclusions of $S^1$ into $S^1 \vee S^1$.
 
Let 
\[
X = (S^{d-1} \vee S^{d-1}) \cup (D^d \amalg D^d)
\]
 be the CW complex of dimension $d$  given by attaching two $d$-cells
to $S^{d-1} \vee S^{d-1}$, each one using the map $c$. Then 
\[
H_{d}(X;\Bbb Z) \cong H_{d-1}(X;\Bbb Z) \cong \Bbb Z\, .
\]  
Denote the $(d-1)$-cells of $X$ by $f_1,f_2$ and the $d$-cells by $e_1,e_2$. We take $x\in H_{d-1}(X;\Bbb Z)$ to be the generator defined by $f_1$.

Let $W_1\: [0,1] \to [-1,1]$ be any smooth function which vanishes for $t \in \{0,1/2,1\}$
and which satisfies $W_1(t) < 0$ for $t\in (0,1/2)$ and $W_1(t) > 0$ for $t\in (1/2,1)$.
We take $W_1$ to be a one-parameter family of weights for the $d$-cell $e_1$.
Set $W_2(t) := -W_1(t)$, providing a
family of weights for the $d$-cell $e_2$. Let $E_1\:[0,1] \to [-1,1]$ be any smooth function such that
$E_1(1/2) > 1$ and $E_1(0) = E_1(1) = -1$. Set $E_2(t) = -E_1(t)$. 
Then $\gamma(t):= (E_\bullet(t),W_\bullet(t))$ defines a good 1-periodic continuous driving protocol on $X$.
If we additionally assume  $E_i'(0) = E_i'(1), W_i'(0) = W_i'(1)$ for $i=1,2$, then $\gamma$ will be smooth. 
  
\begin{bigthm}\label{bigthm:ex}  With respect to the above choices, 
the average current in the low temperature, adiabiatic limit coincides with the generator
of $H_d(X;\Bbb Z)$ given by the cycle $e_1-e_2$.
\end{bigthm}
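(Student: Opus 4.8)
The plan is to read the result off the quantization theorem (Theorem~\ref{thm:quantization}): once its hypotheses are in force, $\lim_{\beta\to\infty}Q^B_\beta$ is a combinatorial invariant, and it remains only to compute it for the data at hand. First I would record the cellular chain complex. Since $c=\Sigma^{d-2}(xy^{-1})$, composing $c$ with the two collapse maps $S^{d-1}\vee S^{d-1}\to S^{d-1}$ has degrees $+1$ and $-1$, so $\partial e_1=\partial e_2=f_1-f_2$ in $C_\ast(X;\Z)$. Hence $Z_d(X;\Z)=\Z\langle e_1-e_2\rangle=H_d(X;\Z)$, while $Z_{d-1}(X;\Z)=C_{d-1}(X;\Z)$ (the $(d-1)$-cells being cycles) and $B_{d-1}(X;\Z)=\Z\langle f_1-f_2\rangle$, so $H_{d-1}(X;\Z)\cong\Z$ is generated by $x=[f_1]=[f_2]$ and $z_0=f_1$; the expectation therefore evolves on the affine line $\{af_1+bf_2\st a+b=1\}$. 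Since the integral homology of $X$ is torsion free and $B_{d-1}(X;\Z)$ is a direct summand of $C_{d-1}(X;\Z)$, the denominator of Theorem~\ref{thm:quantization} is $\delta=1$, so the limit we want is an integral $d$-cycle, hence an integer multiple of $e_1-e_2$.

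Next, by Theorems~\ref{bigthm:expectation} and~\ref{bigthm:adiabatic}, $Q^B_\beta$ is governed by the Boltzmann cycle $\rho^B$, the instantaneous harmonic representative of $x$ for the biased Laplacian $H(t)=\partial\,\partial^\ast_{E,W}$ (Theorem~\ref{thm:Boltzmann}). Writing $\rho^B(t)=af_1+bf_2$ with $a+b=1$, the equation $H(t)\rho^B(t)=0$ reduces for this $X$ to $a\,e^{\beta E_1}=b\,e^{-\beta E_1}$, whence
\[
\rho^B(t)\;=\;\frac{e^{-\beta E_1(t)}f_1+e^{\beta E_1(t)}f_2}{e^{-\beta E_1(t)}+e^{\beta E_1(t)}}\,.
\]
As $\beta\to\infty$ the mass of $\rho^B(t)$ concentrates on $f_1$ when $E_1(t)<0$ and on $f_2$ when $E_1(t)>0$; since $E_1(0)=E_1(1)=-1<0$ and $E_1(\tfrac12)>1>0$, the support of the limiting Boltzmann cycle starts at $f_1$, migrates to $f_2$ at some time(s) in $(0,\tfrac12)$, and returns to $f_1$ at some time(s) in $(\tfrac12,1)$. (The hypothesis $E_1(\tfrac12)>1$, together with $W_1(\tfrac12)=0$, is what keeps the ground state unambiguous at the midpoint.)

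It then remains to route the probability flux. Although $e_1$ and $e_2$ both implement the elementary homology $f_1\sim f_2$, they are distinguished in the state diagram by their weights: one computes
\[
\partial^\ast_{E,W}f_1=e^{\beta E_1}\,v\,,\qquad \partial^\ast_{E,W}f_2=-e^{-\beta E_1}\,v\,,\qquad v:=e^{-\beta W_1}e_1+e^{\beta W_1}e_2\,,
\]
and the $e_1$-- to $e_2$--component ratio of $v$ is $e^{-2\beta W_1}$, so in the low temperature limit $v$ lies along $e_1$ when $W_1<0$ and along $e_2$ when $W_1>0$. Because $W_1<0$ on all of $(0,\tfrac12)$ and $W_1>0$ on all of $(\tfrac12,1)$, every migration of the Boltzmann support in the first half period is carried by $e_1$ alone and every migration in the second half period by $e_2$ alone; and irrespective of how many times $E_1$ changes sign on each half period, the net displacements there are $f_1\rightsquigarrow f_2$ and $f_2\rightsquigarrow f_1$. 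Substituting this into the quantization formula of Theorem~\ref{thm:quantization}, the two half periods contribute one unit of current along $e_1$ and the opposite unit along $e_2$, giving $\lim_{\beta\to\infty}Q^B_\beta=e_1-e_2$. (Goodness of $\gamma$, needed to invoke that theorem, holds since at every $t$ either $E_1(t)\neq 0$ or $W_1(t)\neq 0$, so $(E(t),W(t))\in\breve{\cal M}_X$.)

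The step I expect to be the real obstacle is the flux routing: proving that each passage of the Boltzmann support between $f_1$ and $f_2$ contributes, in the low temperature adiabatic limit, exactly one signed unit of current along the dominant $d$-cell --- neither $0$ nor a fraction --- and that passages on the same half period accumulate rather than cancel. This boundary-layer (adiabatic pumping) analysis is precisely what the proof of Theorem~\ref{thm:quantization} supplies; with it available, everything above is bookkeeping with orientations and signs.
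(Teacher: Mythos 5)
Your route is essentially the paper's: the paper also reduces the example to the machinery behind Theorem~\ref{thm:quantization}, identifying $L_1,L_2$ (the spheres on $f_1,f_2$) as the two spanning co-trees, $T_1,T_2$ (attach $e_1$, resp.\ $e_2$, to the $(d-1)$-skeleton) as the two spanning trees, noting that $E_1<E_2$ at $t=0,1$ and $E_1>E_2$ at $t=\tfrac12$ select $L_1,L_2,L_1$ as minimal co-trees while $W_1<W_2$ on $(0,\tfrac12)$ and $W_2<W_1$ on $(\tfrac12,1)$ select $T_1$, then $T_2$, and then summing the two type-$V$ contributions $\varsigma_{T^\mu}\bigl(\psi_{L^\mu(v)}-\psi_{L^\mu(u)}\bigr)[z_0]$ from the decomposition \eqref{eqn:sum-decomp}. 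Your explicit formulas for $\rho^B$ and for $\partial^*_{E,W}f_i$ are just the concrete form of Lemmas~\ref{lem:low_temp_boltz} and \ref{lem:low_temp_K} in this two-cell case, and deferring the boundary-layer/adiabatic-pumping step to the proof of Theorem~\ref{thm:quantization} is exactly what the paper does, so that is not a gap. The one point to tighten is the final sign: having fixed $\partial e_1=\partial e_2=f_1-f_2$, the first half-period contributes $\varsigma_{T_1}\bigl(\psi_{L_2}[z_0]-\psi_{L_1}[z_0]\bigr)=\varsigma_{T_1}(f_2-f_1)=-e_1$ and the second contributes $\varsigma_{T_2}(f_1-f_2)=+e_2$, so your bookkeeping as stated yields $e_2-e_1$ rather than the asserted $e_1-e_2$; the paper gets $e_1-e_2$ because its proof orients the attaching data so that $e_1$ bounds $f_2-f_1$. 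So either adjust your convention for $\partial e_i$ (or the labelling of $f_1,f_2$ against $x,y$) or correct the last tally; with the signs made consistent the argument is complete and matches the paper's.
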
 

Integer coefficients occur in Theorem \ref{bigthm:ex},
since in this example $\delta=1$, where $\delta$ is as in Theorem \ref{bigthm:quant}.

\begin{out} Section \ref{sec:prelim} is about language. In section
\ref{sec:combinatorial_struct} we review some material on spanning trees and spanning co-trees that appears in our earlier papers \cite{CCK:Kirchhoff}, \cite{CCK:Boltzmann}. In section \ref{sec:process} we define the Markov CW chain, derive its basic properties and then give a proof of Theorem \ref{bigthm:expectation}.  The proof of the Adiabatic Theorem appears in section \ref{sec:adiabatic}. In section \ref{sec:temp}, we determine the low temperature limit of the time-dependent Boltzmann distribution. Section \ref{sec:quantize} contains the proof of the Quantization Theorem. Section \ref{sec:example} validates the example
(Theorem \ref{bigthm:ex}).
\end{out}

\begin{acks}  The results of this paper are a distillation
of the first author's Ph.~\!\!D.~dissertation.
During the period when this paper was being written,
the third author received generous support
from the Eurias foundation and the Israel Institute for Advanced Studies.
This material is based upon work supported by the National Science
Foundation Grant CHE-1111350 and the Simons Foundation Collaboration
Grant 317496.
\end{acks}

\section{Preliminaries \label{sec:prelim}}

\subsection{Notation}
Many of the functions appearing in this paper depend on several variables. To avoid clutter we typically avoid displaying the function arguments. However, when a particular
variable is to be emphasized we display it. For example, if $f(t,x,y,z)$
is a function of four variables, we typically write it as $f$. When we wish to emphasize some of the variables, say $t,z$, we write $f = f_{t,z}$ or $f = f(t,z)$.
If $f$ is differentiable in the $t$-variable, where $t$ is viewed as time, we write $\dot f$ for its time derivative (i.e., $\tfrac{\partial f}{\partial t}$).

\subsection{CW Complexes}
Let $X$ be a finite CW complex of fixed dimension $d \geq 1$.
We denote the $k$-skeleton of $X$ by $X^{(k)}$, and the set of $k$-cells
by $X_k$. We are primarily interested in the top dimensions of $X$ 
($d$ and $d-1$), but our results
hold for any intermediate dimension $k$ by truncation to $X^{(k)}$.

Recall that the CW structure of $X$ 
is specified inductively by attaching cells of increasing dimension.
The $k$-skeleton is formed from the $(k-1)$-skeleton by means of 
attaching maps
\[
  S^{k-1}_{\alpha} @> \varphi_{\alpha} >> X^{(k-1)}
\]
where $\alpha$ indexes the set of $k$-cells to be attached. Then 
\[
  X^{(k)} = X^{(k-1)} \cup \coprod_{\alpha} D_{\alpha}^k \, ,
\]
where the disjoint union is amalgamated along the attaching maps.

\subsection{The cellular chain complex}  For a commutative ring $A$, let 
\[
C_k := C_k(X;A)
\] denote the free $A$-module
with basis $X_k$. In this section, $A =\Bbb R$ is the field of real numbers.
For now, equip $C_k$ with the standard inner product $\langle{-},{-}\rangle$
by declaring $X_k$ to be an orthonormal basis. 
Recall that $C_\ast$ forms a chain complex of abelian groups (vector spaces when $A = \Bbb R$), in which
the effect of the boundary operator $\partial\: C_k \to C_{k-1}$
on a $k$-cell $\alpha$ is
\[
  \partial \alpha = \sum_{j \in X_{k-1}}
    b_{\alpha, j} j\, ,
\]
where $b_{\alpha, j} := \langle \partial \alpha, j \rangle$
is the incidence number of $\alpha$ and $j$; this is a finite sum. 
The incidence number
can be explicitly described by means of the attaching maps: 
$b_{\alpha, j}$ is the degree of the composite
\[
 S^{k-1}_{\alpha} @>\varphi_{\alpha}>> X^{(k-1)} @>>> X^{(k-1)}/X^{(k-2)}
 \cong \bigvee_i S^{k-1}_i @>>> S^{k-1}_j,
\]
where the last map is given by projection onto the wedge summand 
corresponding to cell $j$.

We will  assume $X$ comes equipped with the following auxiliary structure:

\begin{hypo_no}
  \label{hypo:decomp} Let $k$ be an integer satisfying $1\le k \le d$.
  For every $k$-cell $\alpha$ and $(k-1)$-cell $j$, we fix a
  choice of finite set $X(\alpha,j)$ such that the cellular boundary map
  $\partial \: C_k \to C_{k-1}$  admits an additional decomposition:
  \[
    b_{\alpha, j} = \sum_{\varepsilon_{\alpha,j} \in X(\alpha,j)} 
    (-1)^{\chi(\varepsilon_{\alpha,j})}\, ,
  \]
  where $\chi \in \{0,1\}$.
\end{hypo_no}

\begin{rems}(1).~Clearly, one can always make choices so that the hypothesis is satisfied. However,
in the main cases of interest the decomposition comes for free. 

In fact, the hypothesis is inspired by properties of the boundary map in the Morse-Smale complex of a 
Morse function $f : M \to \R$ on a compact Riemannian manifold
$M$ satisfying the Morse-Smale transversality condition. 
In the Morse-Smale case, we take
$X(\alpha,j)$ to the (finite) set of flow lines between the corresponding critical points of index $k$ and $k-1$,
$b_{\alpha,j}$ is a signed sum over the flow lines, 
with $(-1)^{\chi(\varepsilon)}$ the sign of flow line $\varepsilon$.
\smallskip

\noindent (2).~There are other cases of interest in which the hypothesis holds without additional choices:
if $X$ is a regular CW complex, connected polyhedron, or
simplicial complex, then the hypothesis holds with $|X(\alpha,j)| = 1$ for every $\alpha$ and $j$, and
therefore, $b_{\alpha,j} = \pm 1$.
\end{rems}

The {\it coboundary operator} $\partial^*\: C_{k-1} \to C_k$ is the formal
adjoint to the boundary operator in the standard inner products. 
Explicitly,
\[
  \partial^*j = \sum_{\alpha \in X_d}
   b^*_{j,\alpha} \alpha \, ,
\]
for any $(k-1)$-cell $j$, where $b^*_{j, \alpha} := b_{\alpha, j}$.

%
%
%
%
%

We write $H_\ast(X;A)$ for the {\it cellular homology} of $X$ with coefficients in $A$. That is
\[
H_k(X;A) \, :=\, 
Z_k/B_k\, ,
\]
where $Z_k$, the group of $k$-cycles, is the kernel of 
the homomorphism $\partial \: C_k \to C_{k-1}$, and
$B_k$, the group of $k$-boundaries, is the image of $\partial \: C_{k+1} \to C_k$.
The quotient is well-defined since $\partial \circ \partial = 0$.

\subsection{Weight systems}
For the remainder of the paper, we assume that 
\[
k = d := \dim X\, .
\]
Fix a real number $\beta>0$, known as {\it inverse temperature}. 

\begin{defn}
  A system of {\it weights} for $X$ consists of functions
  \[
    E\:X_{d-1} \to \R \quad \text{ and }  \quad
    W\: X_d \to \R \, .
  \]
  We write $E_j := E(j)$ for a $(d-1)$-cell $j$ and 
  $W_{\alpha} := W(\alpha)$ for a
  $d$-cell $\alpha$.
\end{defn}

Fixing the weights for the moment and a real number $\beta >0$, define (diagonal) operators
\begin{align*}
  e^{\beta E}\: C_{d-1}(X;\R) &\to C_{d-1}(X;\R) \quad \text{ and } \quad
  e^{\beta W}\: C_d(X;\R) \to C_d(X;\R)\, ,
\end{align*}
by
\begin{align}\label{eqn:op_chain_cx}
  j &\mapsto e^{\beta E_j} j  \quad \text{ and } \quad
  \alpha \mapsto e^{\beta W_{\alpha}} \alpha\, ,
\end{align}
for $j \in X_{d-1}$ and  $\alpha \in X_d$. 
We use these  to equip $C_d(X;\R)$ and 
$C_{d-1}(X;\R)$ with {\it modified inner products:} 
for $i,j \in X_{d-1}$ and $\alpha,\gamma\in X_{d}$, set
\begin{gather}\label{eqn:modified_ip}
  \langle i,j \rangle_E := e^{\beta E_i} \delta_{ij} 
  \quad \text{ and } \quad \langle \alpha, \gamma \rangle_W := e^{\beta W_{\alpha}} 
  \delta_{\alpha \gamma} \, ,
\end{gather}
where in this case
$\delta$ denotes Kronecker delta. The modified inner products
are then given by extending these formulas bilinearly.

If we define the formal adjoint of $\partial$ using the modified
inner products, we obtain
the {\it biased coboundary operator;} explicitly, 
\begin{gather}\label{eqn:biased_div}
  \partial_{E,W}^* \, := \, e^{-\beta W} \partial^* e^{\beta E} \, ,
\end{gather}
where $\partial^*$, the standard coboundary operator, is the formal adjoint
 with respect to the standard inner
products.

\section{Combinatorial structures \label{sec:combinatorial_struct}}

We briefly recall
the properties of spanning trees and spanning co-trees in this section.
We do not present any 
new results in this section and we refer the reader to~\cite{CCK:Kirchhoff} 
and~\cite{CCK:Boltzmann} for a more complete treatment. 
For a finite complex $Y$, let $\beta_k(Y)$ be the $k$-th betti number, i.e., the rank of $H_k(Y;\Bbb Q)$.

\subsection{Spanning trees}

\begin{defn} Assume $\dim X = d \ge 1$ and let $1\le k \le d$ .
A {\it k-spanning tree} for $X$ is a subcomplex $i: T \subset X$
such that
\begin{itemize}
  \item $H_k(T;\Z) \cong 0$, and
  \item $\beta_{k-1}(T) =\beta_{k-1}(X)$, and 
  \item $X^{(k-1)} \subset T \subset X^{(k)}$.
\end{itemize}
When $k=d$, we simplify the terminology to {\it spanning tree}.
\end{defn}

\begin{rem} If $d=k=1$ then the above  coincides with the usual
notion of spanning tree in a connected graph.
\end{rem}

\begin{defn}
  A $k$-cell $b \in X_k$ is said to be {\it essential} if there exists
  a $k$-cycle $z \in Z_k(X;\R)$ such that $\langle z,b \rangle \neq 0$.
\end{defn}

Removing an essential $k$-cell from $X^{(k)}$ results in a complex in which
$\beta_{k}$ decreases by one and $\beta_{k-1}$ is 
fixed~\cite[Lemma 2.2]{CCK:Kirchhoff}. Every $k$-spanning tree can therefore
be constructed by iteratively removing essential $k$-cells from $X^{(k)}$.

\begin{defn} Let $k=d$.
  For a spanning tree $T$, define a linear transformation
  \begin{gather}
    \label{eqn:K-T}
    \varsigma_T\: B_{d-1}(X;\Q) \to C_{d}(T;\Q)
  \end{gather}
  as follows: $\varsigma_T(b)$ is the unique $d$-chain in $T$ so that
  $\partial \varsigma_T(b) = b$.
\end{defn}

The $d$-chain $\varsigma_T(b)$ exists since $B_{d-1}(T;\Q) = B_{d-1}(X;\Q)$ for every spanning
tree $T$. The chain is unique since the difference of any 
two distinct $d$-chains with boundary $b$ would give rise to a 
non-trivial $d$-cycle in $T$, for 
which there are none.


\begin{defn} For a given system of weights $(E,W)$ on $X$,
 the {\it weight} of a spanning tree $T$ is the positive real number
  \[
    w_T := \theta_T^2 \prod_{\alpha \in T_d} e^{-\beta W_\alpha} \, \, ,
  \]
  where $\theta_T$ denotes the order of the torsion subgroup of $H_{d-1}(T;\Z)$.
\end{defn}
 
 \begin{thm}[{cf.~\cite[thm.~A]{CCK:Kirchhoff}}]
   \label{thm:Kirchhoff_network}
  With respect to the modified inner product $\langle - , - \rangle_W$, 
  an orthogonal splitting to the boundary operator $\partial\: C_d(X;\R) 
  \to B_{d-1}(X;\R)$ is given by
  \begin{equation}\label{eqn:K_op}
\cal A:= \tfrac{1}{\Delta} \sum_T w_T \varsigma_T\, ,
 \end{equation}
  where the sum is over all spanning trees, and $\Delta = \sum_T w_T$.
\end{thm}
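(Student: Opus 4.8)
The plan is to verify two things: that $\cal A\colon B_{d-1}(X;\R)\to C_d(X;\R)$ is a section of $\partial$ (i.e.\ $\partial\cal A=\mathrm{id}$ on $B_{d-1}$), and that its image is orthogonal to $\ker(\partial\colon C_d\to B_{d-1})=Z_d(X;\R)$ with respect to $\langle-,-\rangle_W$. The section property is essentially immediate from the definition of $\varsigma_T$: since $\partial\varsigma_T(b)=b$ for every spanning tree $T$ and every $b\in B_{d-1}(X;\R)$, we get $\partial\cal A(b)=\tfrac1\Delta\sum_T w_T\,\partial\varsigma_T(b)=\tfrac1\Delta\bigl(\sum_T w_T\bigr)b=b$, using $\Delta=\sum_T w_T>0$ (each $w_T>0$, and there is at least one spanning tree since $X$ is finite). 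So the content of the theorem is the orthogonality assertion, and this is where I would cite the structural result from \cite{CCK:Kirchhoff}.

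The approach to orthogonality I would take is the following. Fix a spanning tree $T$; the cells $X_d\setminus T_d$ are exactly the essential $d$-cells removed in forming $T$, and these index a basis for $Z_d(X;\R)$ — more precisely, for each removed essential cell $\alpha$ there is a unique $d$-cycle $z^T_\alpha$ supported on $T_d\cup\{\alpha\}$ with $\langle z^T_\alpha,\alpha\rangle=1$, and these $z^T_\alpha$ form a basis of $Z_d(X;\R)$. Dually, $\varsigma_T$ lands in $C_d(T;\R)$, which is spanned by $T_d$. The crux is a ``matrix-tree''-style computation: one shows that for any $b\in B_{d-1}$ and any $d$-cycle $z$, the pairing $\langle\cal A(b),z\rangle_W$ telescopes to zero. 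In practice this is done by a weighted deletion–contraction / Cauchy–Binet argument — the weights $w_T=\theta_T^2\prod_{\alpha\in T_d}e^{-\beta W_\alpha}$ are precisely the ``generalized tree weights'' for which the Kirchhoff-type identities hold, and $\theta_T^2$ (the torsion factor) is exactly what is needed for the identity $\langle\cal A(b),z\rangle_W=0$ to hold over a complex with torsion in $H_{d-1}$. Since the excerpt explicitly says this section reproves nothing and defers to \cite[thm.~A]{CCK:Kirchhoff}, I would state the reduction cleanly and then invoke that theorem.

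Concretely, the proof I would write has three short steps. \emph{Step 1:} Note $\Delta>0$ and each summand $w_T\varsigma_T$ is $\Q$-linear on $B_{d-1}$, so $\cal A$ is well-defined; extend scalars to $\R$. \emph{Step 2:} Compute $\partial\cal A=\mathrm{id}_{B_{d-1}}$ as above. \emph{Step 3:} For the orthogonality, observe that $C_d(X;\R)=C_d(T;\R)\oplus(\text{span of removed cells})$ is \emph{not} the orthogonal decomposition we want; instead use that $C_d(X;\R)=\im(\cal A)\oplus Z_d(X;\R)$ as vector spaces (a consequence of Step 2 plus dimension count: $\dim\im\cal A=\dim B_{d-1}=\dim C_d-\dim Z_d$), so it suffices to check $\langle\cal A(b),z\rangle_W=0$ for all $b\in B_{d-1}$, $z\in Z_d(X;\R)$. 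This last identity is exactly the statement proved in \cite[thm.~A]{CCK:Kirchhoff} (the operator there, built from the same weighted sum over spanning trees, is shown to be the $W$-orthogonal Moore–Penrose-type splitting), so the theorem follows by appealing to it — possibly after translating between the normalization conventions of the two papers.

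The main obstacle, as I see it, is \emph{not} any hard new argument but rather bookkeeping: making sure the torsion factor $\theta_T$, the choice of orientation/incidence signs, and the normalization $\tfrac1\Delta$ all match the conventions of \cite{CCK:Kirchhoff} so that the cited theorem applies verbatim. If one wanted a self-contained proof, the genuinely hard part would be the weighted matrix-tree identity underlying Step 3 — establishing $\langle\cal A(b),z\rangle_W=0$ from scratch requires the full Cauchy–Binet expansion with the torsion-squared weights, which is the technical heart of \cite{CCK:Kirchhoff} and not something I would reprove here.
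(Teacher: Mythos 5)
Your proposal is correct, and it matches the paper's treatment: the paper offers no proof of Theorem~\ref{thm:Kirchhoff_network} at all, stating explicitly that Section~\ref{sec:combinatorial_struct} contains no new results and deferring entirely to \cite[thm.~A]{CCK:Kirchhoff}, which is exactly where you send the orthogonality statement. Your direct verification that $\partial\mathcal{A}=\mathrm{id}$ on $B_{d-1}(X;\R)$ (using $\partial\varsigma_T(b)=b$ and $\Delta=\sum_T w_T>0$) is a harmless and correct addition, and your reduction of the remaining content to $\langle\mathcal{A}(b),z\rangle_W=0$ for $z\in Z_d(X;\R)$ is precisely the part carried by the cited theorem.
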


\begin{rem} \label{rem:ortho-section-bdy}
The map $\cal A$ is a orthogonal splitting of $\partial$ in the
short exact sequence
\[
  0 \lra Z_d(X;\R) \stackrel{i}{\lra} C_d(X;\R) \stackrel{\partial}{\lra} B_{d-1}(X;\R) \lra 0 \, ,
\]
with respect to the modified inner product $\langle - , - \rangle_W$. It follows
that 
\[
I- \cal A\partial\:  C_{d}(X;\R) \to Z_d(X;\R)
\]
gives the orthogonal projection
of $i$. The latter operator was 
constructed explicitly and studied in~\cite{CCK:Kirchhoff}.
\end{rem}

\subsection{Spanning co-trees}

\begin{defn} Assume $\dim X = d \ge 1$. Fix an integer $k$ with $0 \le k \le d$.
  A {\it k-spanning co-tree} for $X$ is a 
  subcomplex $j:L \subset X$ such that
  \begin{itemize}
    \item $j_*: H_{k}(L;\Q) \to H_{k}(X;\Q)$ is an isomorphism,
    \item $\beta_{k-1}(L) = \beta_{k-1}(X)$, and 
    \item $X^{(k-1)} \subset L \subset X^{(k)}$.
  \end{itemize} 
  When $k=d-1$ we shorten the terminology to {\it spanning co-tree}.
\end{defn}

\begin{rem} Similar to $k$-spanning trees, $k$-spanning co-trees are shown to exist by
removing certain $k$-cells from $X^{(k)}$. 

Note that a 0-spanning co-tree is just a 0-cell of $X$. There is
only one $d$-spanning co-tree given by $X$. 
\end{rem}

We now restrict to the case $k=d-1$.
Since a spanning co-tree $L$
has no $d$-cells, the relative homology group $H_{d-1}(X,L;\Q)$ 
is trivial. It follows that $H_{d-1}(X,L;\Z)$ is finite; let $a_L$
denote its order. Note that the composite
\[
  \phi_L\: Z_{d-1}(L;\Z) @> \cong >> H_{d-1}(L;\Z) \to H_{d-1}(X;\Z).
\]
is a rational isomorphism since $L$ has no $d$-cells. 

\begin{defn} With $L$ as above, let
 $\psi_L\: H_{d-1}(X;\Q) \to Z_{d-1}(X;\Q)$ denote the 
  composite
  \[
 H_{d-1}(X;\Q) @> (\phi_L \otimes \Q)^{-1}>\cong> Z_{d-1}(L;\Q) @> j_{*} >> Z_{d-1}(X;\Q) \, .
  \]
\end{defn}

\begin{defn} For a given system of weights $(E,W)$ on $X$,
  the {\it weight} of a spanning co-tree $L$ is the positive real number 
  \[
    b_L = a_L^2 \prod_{b\in L_{d-1}} e^{-\beta E_b} \,\, .
  \]
\end{defn}

\begin{thm}[{\cite[thm.~A]{CCK:Boltzmann}}]
  \label{thm:Boltzmann}
  With respect to the modified inner product $\langle - , 
  - \rangle_E$, the orthogonal splitting to the quotient homomorphism
  $Z_{d-1}(X;\R) \to H_{d-1}(X;\R)$ is given by
  \begin{gather}
\label{eqn:rhoB}
    \rho^B(E) = \rho^B = \tfrac{1}{\nabla} \sum_L b_L \psi_L\, ,
  \end{gather}
  where the sum is over all spanning co-trees $L$, and $\nabla = \sum_L b_L$.
\end{thm}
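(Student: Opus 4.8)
{\bf Proof proposal for Theorem~\ref{thm:Boltzmann}.}

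The plan is to reduce the statement to a finite-dimensional linear algebra identity in the inner product space $(Z_{d-1}(X;\R),\langle-,-\rangle_E)$, and then verify that the displayed operator $\rho^B = \tfrac{1}{\nabla}\sum_L b_L\psi_L$ is a right inverse to the quotient map $q\: Z_{d-1}(X;\R)\to H_{d-1}(X;\R)$ whose image is the orthogonal complement of $B_{d-1}(X;\R)$. Since this is quoted from \cite[thm.~A]{CCK:Boltzmann}, I would present it as a reconstruction rather than a wholly new argument, but the natural line of attack is as follows. First I would observe that each $\psi_L$ is \emph{a} section of $q$: by construction $\phi_L\otimes\Q$ is an isomorphism $Z_{d-1}(L;\Q)\xrightarrow{\cong}H_{d-1}(X;\Q)$ and $j_*$ composed with the quotient recovers $\phi_L$, so $q\circ\psi_L = \mathrm{id}$ on $H_{d-1}(X;\Q)$, hence on $H_{d-1}(X;\R)$ after extension of scalars. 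Taking the normalized convex combination $\tfrac{1}{\nabla}\sum_L b_L\psi_L$ with $\nabla=\sum_L b_L>0$ therefore also gives a section of $q$. This handles one of the two defining properties of the orthogonal splitting for free; the content is in the orthogonality.

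For the orthogonality claim I would need to show that for every $x\in H_{d-1}(X;\R)$, the cycle $\rho^B(x)$ is $\langle-,-\rangle_E$-orthogonal to $B_{d-1}(X;\R)$, equivalently that $\langle \rho^B(x), \partial\alpha\rangle_E = 0$ for every $d$-cell $\alpha$. Unwinding the modified inner product via \eqref{eqn:modified_ip}, this is the statement that $\partial^* e^{\beta E}\rho^B(x) = 0$, i.e.\ $e^{\beta E}\rho^B(x)$ is a cocycle, or again that $\rho^B(x)$ lies in the kernel of $\partial_{E,W}^*$ precomposed suitably — in any case it becomes a weighted Kirchhoff-type identity. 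The key step is a matrix-tree/all-minors computation: expanding $\psi_L$ in the cell basis, the coefficient $b_L = a_L^2\prod_{b\in L_{d-1}}e^{-\beta E_b}$ is exactly the weighted enumerator of the spanning co-tree $L$, and the alternating sum over all co-trees $L$ containing a fixed $(d-1)$-cell, weighted this way, telescopes against the incidence relations coming from $\partial\alpha$. Concretely I would fix $\alpha$ and a cocycle-testing vector, write $\langle\rho^B(x),\partial\alpha\rangle_E$ as $\tfrac{1}{\nabla}\sum_L b_L\langle\psi_L(x),\partial\alpha\rangle_E$, and use the combinatorial fact (the higher-dimensional Cauchy–Binet / Maxwell's rule from \cite{CCK:Kirchhoff}) that this sum vanishes because each $\psi_L(x)$ is a \emph{cycle}, so its pairing with a boundary is insensitive to the choice of representative, while the co-tree weights are arranged to make the barycenter land in the harmonic subspace. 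The torsion factors $a_L^2$ and $\theta_T^2$ are precisely what is needed for the integral normalization to work out, paralleling the role of $\theta_T^2$ in Theorem~\ref{thm:Kirchhoff_network}.

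The main obstacle, and the part that takes real work, is the weighted all-minors matrix-tree identity controlling $\sum_L b_L\psi_L$: one must show that this particular convex combination of sections is the \emph{orthogonal} one and not merely some section, and that requires the deletion–contraction / co-tree enumeration bookkeeping together with careful tracking of the torsion orders $a_L$ — this is where the dimension $d\ge 2$ genuinely differs from the classical graph case and where \cite{CCK:Boltzmann} does the heavy lifting. A clean alternative route, which I would actually prefer to write up, is dual to the proof of Theorem~\ref{thm:Kirchhoff_network}: the orthogonal splitting of $q\: Z_{d-1}\to H_{d-1}$ with respect to $\langle-,-\rangle_E$ is unique (it is the orthogonal projection onto $(B_{d-1})^{\perp_E}\cap Z_{d-1}$), so it suffices to exhibit \emph{any} formula with the right two properties; one then checks the two bullet points above — section of $q$ (done) and image orthogonal to $B_{d-1}$ — with the orthogonality following from Theorem~\ref{thm:Kirchhoff_network} applied in the dual, by identifying co-trees of $X$ in dimension $d-1$ with trees of a suitable dual complex and matching $b_L$ with $w_T$ under that correspondence. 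Either way, uniqueness of the orthogonal splitting reduces the theorem to the single verification that $e^{\beta E}\rho^B(x)$ is a cocycle for all $x$.
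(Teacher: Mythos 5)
There is nothing in this paper to compare your argument against: Theorem~\ref{thm:Boltzmann} is quoted verbatim from \cite[thm.~A]{CCK:Boltzmann} and no proof is given here, so your proposal has to stand on its own. Its framing is fine as far as it goes: each $\psi_L$ is indeed a section of the quotient $q\: Z_{d-1}(X;\R)\to H_{d-1}(X;\R)$, a positive convex combination of sections is again a section, the orthogonal splitting of $q$ with respect to $\langle-,-\rangle_E$ is unique, and orthogonality of the image to $B_{d-1}(X;\R)$ is correctly translated into the condition $\partial^\ast e^{\beta E}\rho^B(x)=0$ via \eqref{eqn:modified_ip}. So the theorem is correctly reduced to that one identity.

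The gap is that this identity --- the entire content of the theorem --- is never actually established, and the two justifications you sketch both fail. First, the claim that the sum $\tfrac{1}{\nabla}\sum_L b_L\langle\psi_L(x),\partial\alpha\rangle_E$ vanishes ``because each $\psi_L(x)$ is a cycle, so its pairing with a boundary is insensitive to the choice of representative'' is false: for homologous cycles $z,z'$ the difference $z-z'$ lies in $B_{d-1}(X;\R)$, and $\langle z-z',\partial\alpha\rangle_E$ has no reason to vanish (the modified inner product is not a homology--cohomology pairing). If it were true, \emph{every} section of $q$ would be orthogonal and the specific weights $b_L=a_L^2\prod e^{-\beta E_b}$ would be irrelevant --- which contradicts your own (correct) remark that the point is to single out the orthogonal section among all sections. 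Second, the proposed ``clean alternative'' of deducing the result from Theorem~\ref{thm:Kirchhoff_network} by identifying $(d-1)$-spanning co-trees of $X$ with spanning trees of ``a suitable dual complex'' is unsupported: a general finite CW complex has no such dual complex, and no correspondence matching $b_L$ with $w_T$ is constructed; the two statements are proved in \cite{CCK:Kirchhoff} and \cite{CCK:Boltzmann} by parallel but independent weighted enumeration arguments, not by dualizing one into the other. What is missing, then, is precisely the weighted co-tree (matrix-tree type) computation showing that $e^{\beta E}\rho^B(x)$ is a cocycle, including the role of the torsion orders $a_L$; deferring that to the cited paper is legitimate as a citation, but it means your text is a reduction plus an appeal to \cite{CCK:Boltzmann}, not a proof.
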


\begin{defn}[{cf.~\cite[defn.~1.12]{CCK:Boltzmann}}] \label{defn:Boltzmann_dist} 
Let  $x \in H_{d-1}(X;\Z)$ be an integer homology class.
The {\it Boltzmann distribution at $x$}
is the real $(d-1)$-cycle
\[
\rho^B(x) := \tfrac{1}{\nabla}\sum_L b_L \psi_L(\bar x) \in Z_{d-1}(X;\R)\, ,
\]
where $\bar x \in H_{d-1}(X;\Q)$ is the image of $x$ under the homomorphism
$H_{d-1}(X;\Z) \to H_{d-1}(X;\Q)$.
\end{defn}

\begin{rem}
  \label{rem:D-coeffs}
  For a spanning tree $T$, let $A \subset \Bbb Q$ be a ring
  in which $\theta_T$ is a unit. An elementary  diagram chase 
involving the long exact sequence in 
  homology of the pair $(X,T)$ implies that 
  the linear transformation $\varsigma_T$ uniquely lifts to a homomorphism
 \[
B_{d-1}(X;A) \to C_{d}(T;A)\, .
 \]
  
Similarly, for any spanning co-tree $L$, 
 if the $a_L$ is a unit in $A$, then 
  $\psi_L$ uniquely lifts to a homomorphism
  \[
H_{d-1}(X;A) \to Z_{d-1}(X;A)\, .
  \]
  \end{rem}


\begin{rem}
  \label{rem:harmonic_form}
  The Boltzmann distribution is the unique `harmonic form' on $X$ as 
  specified by combinatorial Hodge theory (see~\cite{CCK:Boltzmann}).
  Remark~\ref{rem:D-coeffs} specifies the minimal 
  coefficients under which the harmonic form of a 
  homology class will exist.
\end{rem}

\begin{ex}
\label{ex:torus}

Let $X$ denote the torus with CW structure given by four $0$-cells,
eight $1$-cells, and four $2$-cells, shown in Figure~\ref{fig:torus}. 
We make the usual identifications
of opposite sides in this picture, although this is not shown explicitly. Instead,
the displayed arrows label a chosen orientation.

This complex has four $2$-spanning trees, given by removing 
any single 2-cell. There are thirty-two 1-spanning trees, 
obtained by subtracting the 24 loops of $X^{(1)}$ from 
the 56 possible choices of 3 edges.

On the other hand, 
there are thirty-two 1-spanning co-trees, and four 0-spanning
co-trees (cf.\ Figure \ref{fig:trees_cotrees}). These statements can be obtained by careful
enumeration or by using Theorem~\cite[Corollary D]{CCK:Boltzmann}.
\end{ex}

\begin{center}
  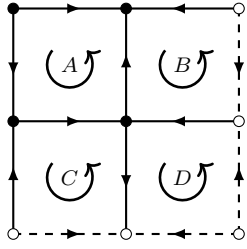
\begin{figure}
\begin{tikzpicture}[scale=1.5]
\begin{scope}[thick,decoration={
    markings,
    mark=at position 0.6 with {\arrow{latex}}}
    ] 
\draw [postaction={decorate}] (0,0) -- (0,1);
\draw [postaction={decorate}] (0,2) -- (0,1);
\draw [postaction={decorate}] (1,1) -- (1,0);
\draw [postaction={decorate}] (1,1) -- (1,2);
\draw [postaction={decorate}] (0,2) -- (1,2);
\draw [postaction={decorate}] (2,2) -- (1,2);
\draw [postaction={decorate}] (0,1) -- (1,1);
\draw [postaction={decorate}] (2,1) -- (1,1);
\draw [postaction={decorate},dashed] (2,0) -- (2,1);
\draw [postaction={decorate},dashed] (2,2) -- (2,1);
\draw [postaction={decorate},dashed] (0,0) -- (1,0);
\draw [postaction={decorate},dashed] (2,0) -- (1,0);
\draw [->,line width = 1pt] (.5,.5) ++(140:2mm) arc (-220:40:2mm);
\draw [->,line width = 1pt] (.5,1.5) ++(140:2mm) arc (-220:40:2mm);
\draw [->,line width = 1pt] (1.5,.5) ++(140:2mm) arc (-220:40:2mm);
\draw [->,line width = 1pt] (1.5,1.5) ++(140:2mm) arc (-220:40:2mm);
\end{scope}
\node[align=left] at (.5,1.5)%
{\tiny $A$};
\node[align=left] at (1.5,1.5)%
{\tiny $B$};
\node[align=left] at (.5,.5)%
{\tiny $C$};
\node[align=left] at (1.5,.5)%
{\tiny $D$};
\node at (0,0) [circle,draw=black,fill=white,inner sep=0.5mm] {};
\node at (0,1) [circle,draw=black, fill=black, inner sep=0.5mm] {};
\node at (0,2) [circle,draw=black, fill=black, inner sep=0.5mm] {};
\node at (1,0) [circle,draw=black, fill=white, inner sep=0.5mm] {};
\node at (1,1) [circle,draw=black, fill=black, inner sep=0.5mm] {};
\node at (1,2) [circle,draw=black, fill=black, inner sep=0.5mm] {};
\node at (2,0) [circle,draw=black, fill=white, inner sep=0.5mm] {};
\node at (2,1) [circle,draw=black, fill=white, inner sep=0.5mm] {};
\node at (2,2) [circle,draw=black, fill=white, inner sep=0.5mm] {};
\end{tikzpicture}
\caption{A torus with the usual identifications of opposite sides made.
The arrows denote a choice of orientation.}
\label{fig:torus}
\end{figure}
\end{center}

\begin{figure}[h]
  \centering
  \begin{subfigure}{\textwidth}
    \centering
\begin{tikzpicture}[scale=1] 
\draw [thick, dotted] (0,0) -- (0,1) ;
\draw [ultra thick] (0,1) -- (0,2) ;
\draw [thick,dotted] (1,0) -- (1,1) ;
\draw [ultra thick] (1,1) -- (1,2) ;
\draw [ultra thick] (0,1) -- (1,1) ;
\draw [thick,dotted] (1,1) -- (2,1) ;
\draw [thick, dotted] (0,2) -- (1,2) ;
\draw [thick,dotted] (1,2) -- (2,2) ;
\draw [thick, dashed] (2,0) -- (2,1) -- (2,2);
\draw [thick, dashed] (0,0) -- (1,0) -- (2,0);
\node at (0,0) [circle,draw=black,fill=white,inner sep=0.5mm] {};
\node at (0,1) [circle,draw=black, fill=black, inner sep=0.5mm] {};
\node at (0,2) [circle,draw=black, fill=black, inner sep=0.5mm] {};
\node at (1,0) [circle,draw=black, fill=white, inner sep=0.5mm] {};
\node at (1,1) [circle,draw=black, fill=black, inner sep=0.5mm] {};
\node at (1,2) [circle,draw=black, fill=black, inner sep=0.5mm] {};
\node at (2,0) [circle,draw=black, fill=white, inner sep=0.5mm] {};
\node at (2,1) [circle,draw=black, fill=white, inner sep=0.5mm] {};
\node at (2,2) [circle,draw=black, fill=white, inner sep=0.5mm] {};
\end{tikzpicture}
\begin{tikzpicture}[scale=1] 
\draw [thick,dotted] (0,0) -- (0,1) ;
\draw [ultra thick] (0,1) -- (0,2) ;
\draw [thick,dotted] (1,0) -- (1,1) ;
\draw [thick,dotted] (1,1) -- (1,2) ;
\draw [thick,dotted] (0,1) -- (1,1) ;
\draw [ultra thick] (1,1) -- (2,1) ;
\draw [ultra thick] (0,2) -- (1,2) ;
\draw [thick,dotted] (1,2) -- (2,2) ;
\draw [thick, dashed] (2,0) -- (2,1) -- (2,2);
\draw [thick, dashed] (0,0) -- (1,0) -- (2,0);
\node at (0,0) [circle,draw=black,fill=white,inner sep=0.5mm] {};
\node at (0,1) [circle,draw=black, fill=black, inner sep=0.5mm] {};
\node at (0,2) [circle,draw=black, fill=black, inner sep=0.5mm] {};
\node at (1,0) [circle,draw=black, fill=white, inner sep=0.5mm] {};
\node at (1,1) [circle,draw=black, fill=black, inner sep=0.5mm] {};
\node at (1,2) [circle,draw=black, fill=black, inner sep=0.5mm] {};
\node at (2,0) [circle,draw=black, fill=white, inner sep=0.5mm] {};
\node at (2,1) [circle,draw=black, fill=white, inner sep=0.5mm] {};
\node at (2,2) [circle,draw=black, fill=white, inner sep=0.5mm] {};
\end{tikzpicture}
\begin{tikzpicture}[scale=1]
\draw [ultra thick] (0,0) -- (0,1) ;
\draw [thick,dotted] (0,1) -- (0,2) ;
\draw [thick,dotted] (1,0) -- (1,1) ;
\draw [thick,dotted] (1,1) -- (1,2) ;
\draw [thick,dotted] (0,1) -- (1,1) ;
\draw [ultra thick] (1,1) -- (2,1) ;
\draw [thick,dotted] (0,2) -- (1,2) ;
\draw [ultra thick] (1,2) -- (2,2) ;
\draw [thick, dashed] (2,0) -- (2,1) -- (2,2);
\draw [thick, dashed] (0,0) -- (1,0) -- (2,0);
\node at (0,0) [circle,draw=black,fill=white,inner sep=0.5mm] {};
\node at (0,1) [circle,draw=black, fill=black, inner sep=0.5mm] {};
\node at (0,2) [circle,draw=black, fill=black, inner sep=0.5mm] {};
\node at (1,0) [circle,draw=black, fill=white, inner sep=0.5mm] {};
\node at (1,1) [circle,draw=black, fill=black, inner sep=0.5mm] {};
\node at (1,2) [circle,draw=black, fill=black, inner sep=0.5mm] {};
\node at (2,0) [circle,draw=black, fill=white, inner sep=0.5mm] {};
\node at (2,1) [circle,draw=black, fill=white, inner sep=0.5mm] {};
\node at (2,2) [circle,draw=black, fill=white, inner sep=0.5mm] {};
\end{tikzpicture}
\caption{Three distinct 1-spanning trees of the torus, out of the 32 total.}
\end{subfigure}

\vspace{1em}
\begin{subfigure}{\textwidth}
  \centering
\begin{tikzpicture}[scale=1] 
\draw [ultra thick] (0,0) -- (0,1) ;
\draw [ultra thick] (0,1) -- (0,2) ;
\draw [thick,dotted] (1,0) -- (1,1) ;
\draw [thick,dotted] (1,1) -- (1,2) ;
\draw [thick,dotted] (0,1) -- (1,1) ;
\draw [ultra thick] (1,1) -- (2,1) ;
\draw [ultra thick] (0,2) -- (1,2) ;
\draw [ultra thick] (1,2) -- (2,2) ;
\draw [thick, dashed] (2,0) -- (2,1) -- (2,2);
\draw [thick, dashed] (0,0) -- (1,0) -- (2,0);
\node at (0,0) [circle,draw=black,fill=white,inner sep=0.5mm] {};
\node at (0,1) [circle,draw=black, fill=black, inner sep=0.5mm] {};
\node at (0,2) [circle,draw=black, fill=black, inner sep=0.5mm] {};
\node at (1,0) [circle,draw=black, fill=white, inner sep=0.5mm] {};
\node at (1,1) [circle,draw=black, fill=black, inner sep=0.5mm] {};
\node at (1,2) [circle,draw=black, fill=black, inner sep=0.5mm] {};
\node at (2,0) [circle,draw=black, fill=white, inner sep=0.5mm] {};
\node at (2,1) [circle,draw=black, fill=white, inner sep=0.5mm] {};
\node at (2,2) [circle,draw=black, fill=white, inner sep=0.5mm] {};
\end{tikzpicture}
\begin{tikzpicture}[scale=1] 
\draw [thick,dotted] (0,0) -- (0,1) ;
\draw [thick,dotted] (0,1) -- (0,2) ;
\draw [ultra thick] (1,0) -- (1,1) ;
\draw [ultra thick] (1,1) -- (1,2) ;
\draw [ultra thick] (0,1) -- (1,1) ;
\draw [thick,dotted] (1,1) -- (2,1) ;
\draw [ultra thick] (0,2) -- (1,2) ;
\draw [ultra thick] (1,2) -- (2,2) ;
\draw [thick, dashed] (2,0) -- (2,1) -- (2,2);
\draw [thick, dashed] (0,0) -- (1,0) -- (2,0);
\node at (0,0) [circle,draw=black,fill=white,inner sep=0.5mm] {};
\node at (0,1) [circle,draw=black, fill=black, inner sep=0.5mm] {};
\node at (0,2) [circle,draw=black, fill=black, inner sep=0.5mm] {};
\node at (1,0) [circle,draw=black, fill=white, inner sep=0.5mm] {};
\node at (1,1) [circle,draw=black, fill=black, inner sep=0.5mm] {};
\node at (1,2) [circle,draw=black, fill=black, inner sep=0.5mm] {};
\node at (2,0) [circle,draw=black, fill=white, inner sep=0.5mm] {};
\node at (2,1) [circle,draw=black, fill=white, inner sep=0.5mm] {};
\node at (2,2) [circle,draw=black, fill=white, inner sep=0.5mm] {};
\end{tikzpicture}
\begin{tikzpicture}[scale=1] 
\draw [thick,dotted] (0,0) -- (0,1) ;
\draw [ultra thick] (0,1) -- (0,2) ;
\draw [ultra thick] (1,0) -- (1,1) ;
\draw [ultra thick] (1,1) -- (1,2) ;
\draw [ultra thick] (0,1) -- (1,1) ;
\draw [ultra thick] (1,1) -- (2,1) ;
\draw [thick,dotted] (0,2) -- (1,2) ;
\draw [thick,dotted] (1,2) -- (2,2) ;
\draw [thick, dashed] (2,0) -- (2,1) -- (2,2);
\draw [thick, dashed] (0,0) -- (1,0) -- (2,0);
\node at (0,0) [circle,draw=black,fill=white,inner sep=0.5mm] {};
\node at (0,1) [circle,draw=black, fill=black, inner sep=0.5mm] {};
\node at (0,2) [circle,draw=black, fill=black, inner sep=0.5mm] {};
\node at (1,0) [circle,draw=black, fill=white, inner sep=0.5mm] {};
\node at (1,1) [circle,draw=black, fill=black, inner sep=0.5mm] {};
\node at (1,2) [circle,draw=black, fill=black, inner sep=0.5mm] {};
\node at (2,0) [circle,draw=black, fill=white, inner sep=0.5mm] {};
\node at (2,1) [circle,draw=black, fill=white, inner sep=0.5mm] {};
\node at (2,2) [circle,draw=black, fill=white, inner sep=0.5mm] {};
\end{tikzpicture}
\caption{Three distinct 1-spanning co-trees of the torus, out of the 32 total.}
\end{subfigure}
  \caption{}
  \label{fig:trees_cotrees}
\end{figure}

\section{The process \label{sec:process}}
In this section, we construct a Markov CW chain given a system of weights on $X$. 
We continue to assume $X$ is a finite connected CW complex of dimension $d \ge 1$.
Our recipe makes use of Hypothesis \ref{hypo:decomp} in the case $k=d$.

\subsection{The cycle-incidence graph}

\begin{defn} For an integer $(d-1)$-cycle $z_0\in Z_{d-1}(X;\Z)$, let
  \[
    Z_{d-1}^{z_0}(X;\Z) = z_0 + B_{d-1}(X;\Z)
  \]
  denote the coset consisting of the
   integral $(d-1)$-cycles that are homologous to $z_0$.
\end{defn}

\begin{defn}
  \label{defn:cycle_incidence} Consider the 
directed graph $G$ defined as follows. The vertices of $G$ are given by 
integer $(d-1)$-cycles $z$ homologous to $z_0$, i.e.,
\[
  z \in Z_{d-1}^{z_0}(X;\Z)\, .
\]
A directed edge of $G$ with source $z$ is specified by a 4-tuple
\[
 e:=  (\alpha,f,\varepsilon_{\alpha,f},z)
\]
with 
$\alpha\in X_d$, $f \in X_{d-1}$, and $\varepsilon_{\alpha,f} \in X(\alpha,f)$, 
satisfying the following:
\begin{itemize}
\item $\langle z,f \rangle \ne 0$,
\item $\langle \partial \alpha,f\rangle = \sum_{\varepsilon \in X(\alpha,f)} 
  (-1)^{\chi(\varepsilon)} 
  \neq 0$, and
\item $z' = z - (-1)^{\chi(\varepsilon_{\alpha,f})}
\langle z,f\rangle \partial \alpha$.
\end{itemize}
In the above, the target of the edge $e$ is defined to be $z'$. 
To indicate this, we sometimes write
\[
z = s(e) \, , \qquad z' = t(e)\, .
\]

The {\it cycle-incidence graph} 
\[
\Gamma := \Gamma_{X,z_0}
\] 
is the directed subgraph of $G$ given by the directed path component of $z_0$. 
That is, a {\it vertex} $z$ lies in $\Gamma$ if
there exists a finite sequence of directed edges $z_0 \to z_1 \to \cdots
\to z_k \to z$, i.e., there is a finite directed path from $x$ 
to $z$.  An {\it edge} belongs to $\Gamma$ if and only if it
occurs in such a path.
\end{defn}

The cycle-incidence graph is the state diagram of 
the Markov CW chain described in the introduction, in which the cycle $z_0$ represents
 an initial condition. For a
particular choice of $(d-1)$-cell $f$ incident to $z$ and $d$-cell $\alpha$
incident to $f$, the cycle $z$ can `hop' across the $d$-cell $\alpha$, to form a new
cycle $z':= z - (-1)^{\chi(\varepsilon_{\alpha,f})}
\langle z,f\rangle \partial \alpha$.
This type of jump is known as an elementary transition. Informally,
an elementary transition consists of the cycle $z$ completely `jumping off' of the cell $f$ across $\alpha$ to form the new cycle $z'$ (cf.~Figures \ref{fig:intro_example} and \ref{fig:transition}). 

\begin{rem}
Typically, the newly formed cycle $z'$ will still have non-zero incidence
with the $(d-1)$-cell $f$ that is used in defining the elementary transition. 
There is one notable exception to this: 
when $|X(\alpha,f)| = 1$ we have $b_{\alpha,f} = \pm 1$. Consequently,
\begin{align*}
  \langle z',f \rangle &= \langle z, f \rangle - (-1)^{\chi(\varepsilon_{\alpha,f})}
  \langle z, f \rangle 
  \sum_k b_{\alpha,k} \langle k, f \rangle \\
  &= \langle z, f \rangle - (-1)^{2 \cdot \chi(\varepsilon_{\alpha,f})} \langle z, f \rangle = 0 \, .
\end{align*}
Therefore, in this case $z'$ will have trivial incidence with  $f$. 
\end{rem}

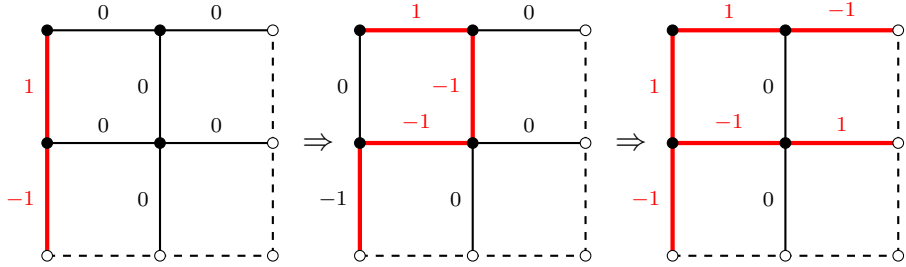
\begin{figure}
\centering
\begin{tikzpicture}[scale=1.5] 
\draw [ultra thick, red] (0,0) -- (0,1) node [midway,left] {\tiny $-1$};
\draw [ultra thick, red] (0,1) -- (0,2) node [midway,left] {\tiny $1$};
\draw [thick] (1,0) -- (1,1) node [midway, left] {\tiny $0$};
\draw [thick] (1,1) -- (1,2) node [midway, left] {\tiny $0$};
\draw [thick] (0,1) -- (1,1) node [midway, above] {\tiny $0$};
\draw [thick] (1,1) -- (2,1) node [midway,above] {\tiny $0$};
\draw [thick] (0,2) -- (1,2) node [midway, above] {\tiny $0$};
\draw [thick] (1,2) -- (2,2) node [midway, above] {\tiny $0$};
\draw [thick, dashed] (2,0) -- (2,1) -- (2,2);
\draw [thick, dashed] (0,0) -- (1,0) -- (2,0);
\node at (0,0) [circle,draw=black,fill=white,inner sep=0.5mm] {};
\node at (0,1) [circle,draw=black, fill=black, inner sep=0.5mm] {};
\node at (0,2) [circle,draw=black, fill=black, inner sep=0.5mm] {};
\node at (1,0) [circle,draw=black, fill=white, inner sep=0.5mm] {};
\node at (1,1) [circle,draw=black, fill=black, inner sep=0.5mm] {};
\node at (1,2) [circle,draw=black, fill=black, inner sep=0.5mm] {};
\node at (2,0) [circle,draw=black, fill=white, inner sep=0.5mm] {};
\node at (2,1) [circle,draw=black, fill=white, inner sep=0.5mm] {};
\node at (2,2) [circle,draw=black, fill=white, inner sep=0.5mm] {};
\end{tikzpicture}
\hspace{0.1em}
\raisebox{3.6em}{$\Rightarrow$} 
\hspace{-1.5em}
\begin{tikzpicture}[scale=1.5] 
\draw [ultra thick, red] (0,0) -- (0,1) node [midway,left,black] {\tiny $-1$};
\draw [thick] (0,1) -- (0,2) node [midway,left,black] {\tiny $0$};
\draw [thick] (1,0) -- (1,1) node [midway, left] {\tiny $0$};
\draw [ultra thick, red] (1,1) -- (1,2) node [midway, left] {\tiny $-1$};
\draw [ultra thick, red] (0,1) -- (1,1) node [midway, above] {\tiny $-1$};
\draw [thick] (1,1) -- (2,1) node [midway,above] {\tiny $0$};
\draw [ultra thick, red] (0,2) -- (1,2) node [midway, above] {\tiny $1$};
\draw [thick] (1,2) -- (2,2) node [midway, above] {\tiny $0$};
\draw [thick, dashed] (2,0) -- (2,1) -- (2,2);
\draw [thick, dashed] (0,0) -- (1,0) -- (2,0);
\node at (0,0) [circle,draw=black,fill=white,inner sep=0.5mm] {};
\node at (0,1) [circle,draw=black, fill=black, inner sep=0.5mm] {};
\node at (0,2) [circle,draw=black, fill=black, inner sep=0.5mm] {};
\node at (1,0) [circle,draw=black, fill=white, inner sep=0.5mm] {};
\node at (1,1) [circle,draw=black, fill=black, inner sep=0.5mm] {};
\node at (1,2) [circle,draw=black, fill=black, inner sep=0.5mm] {};
\node at (2,0) [circle,draw=black, fill=white, inner sep=0.5mm] {};
\node at (2,1) [circle,draw=black, fill=white, inner sep=0.5mm] {};
\node at (2,2) [circle,draw=black, fill=white, inner sep=0.5mm] {};
\end{tikzpicture}
\hspace{0.1em}
\raisebox{3.6em}{$\Rightarrow$}
\hspace{-1.5em}
\begin{tikzpicture}[scale=1.5] 
\draw [ultra thick, red] (0,0) -- (0,1) node [midway,left] {\tiny $-1$};
\draw [ultra thick, red] (0,1) -- (0,2) node [midway,left] {\tiny $1$};
\draw [thick] (1,0) -- (1,1) node [midway, left] {\tiny $0$};
\draw [thick] (1,1) -- (1,2) node [midway, left] {\tiny $0$};
\draw [ultra thick, red] (0,1) -- (1,1) node [midway, above] {\tiny $-1$};
\draw [ultra thick, red] (1,1) -- (2,1) node [midway,above] {\tiny $1$};
\draw [ultra thick, red] (0,2) -- (1,2) node [midway, above] {\tiny $1$};
\draw [ultra thick, red] (1,2) -- (2,2) node [midway, above] {\tiny $-1$};
\draw [thick, dashed] (2,0) -- (2,1) -- (2,2);
\draw [thick, dashed] (0,0) -- (1,0) -- (2,0);
\node at (0,0) [circle,draw=black,fill=white,inner sep=0.5mm] {};
\node at (0,1) [circle,draw=black, fill=black, inner sep=0.5mm] {};
\node at (0,2) [circle,draw=black, fill=black, inner sep=0.5mm] {};
\node at (1,0) [circle,draw=black, fill=white, inner sep=0.5mm] {};
\node at (1,1) [circle,draw=black, fill=black, inner sep=0.5mm] {};
\node at (1,2) [circle,draw=black, fill=black, inner sep=0.5mm] {};
\node at (2,0) [circle,draw=black, fill=white, inner sep=0.5mm] {};
\node at (2,1) [circle,draw=black, fill=white, inner sep=0.5mm] {};
\node at (2,2) [circle,draw=black, fill=white, inner sep=0.5mm] {};
\end{tikzpicture}
\caption{Two elementary transitions on the torus of Example~\ref{ex:torus}.
According to the orientations of Figure~\ref{fig:torus}, the initial cycle first
jumps across the 2-cell $A$ and then the 2-cell $B$, resulting in the 
displayed cycles.}
\label{fig:transition}
\end{figure}

When $\dim X =d=1$, it is not hard to identify the directed graph $\Gamma$ provided that the initial state is a vertex.
Define the {\it double} $DX$ of $X$ to be the directed graph 
with the same set of vertices, where a directed edge is specified by a pair
\[
(i,\alpha) \in X_0\times X_1
\]
such that $i$ is an endpoint $\alpha$. We also assume that
$\alpha$ has distinct endpoints.
We take the initial state $z_0$ to be any vertex of $X$. We
also remind the reader that $X$ is assumed to be finite and connected.

\begin{lem} \label{lem:cycle_double}
 With respect to the above assumptions, $\Gamma= DX$.
\end{lem}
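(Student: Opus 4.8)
The plan is to show that, in dimension one, an elementary transition out of a vertex again produces a vertex, to identify these transitions with the directed edges of $DX$, and then to invoke connectivity of $X$. First observe that since $d=1$ there are no $(-1)$-cells, so $Z_0(X;\Z)=C_0(X;\Z)$, and because $X$ is connected $B_0(X;\Z)$ is the kernel of the augmentation $C_0(X;\Z)\to\Z$. Hence the coset $Z_0^{z_0}(X;\Z)=z_0+B_0(X;\Z)$ is exactly the set of integral $0$-chains of augmentation $1$; in particular every vertex $i\in X_0$, regarded as $1\cdot i$, lies in this coset, so $X_0$ is naturally a subset of the vertex set of the ambient graph $G$ of Definition~\ref{defn:cycle_incidence}.

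Next I would compute the elementary transitions out of a vertex $z=1\cdot i$. Such a transition requires $f\in X_0$ with $\langle z,f\rangle\neq 0$, which forces $f=i$ and $\langle z,f\rangle=1$; it requires $\alpha\in X_1$ with $\langle\partial\alpha,i\rangle\neq 0$, i.e.\ $i$ an endpoint of $\alpha$; and, the decomposition of Hypothesis~\ref{hypo:decomp} for a graph being the trivial one with $|X(\alpha,f)|=1$, the datum $\varepsilon_{\alpha,i}$ is determined and $(-1)^{\chi(\varepsilon_{\alpha,i})}=b_{\alpha,i}$. Since $\alpha$ has distinct endpoints $i$ and $j$, one has $\partial\alpha=b_{\alpha,i}\,i+b_{\alpha,j}\,j$ with $b_{\alpha,i},b_{\alpha,j}\in\{\pm 1\}$ and $b_{\alpha,i}+b_{\alpha,j}=0$, so
\[
z'=z-(-1)^{\chi(\varepsilon_{\alpha,i})}\langle z,i\rangle\,\partial\alpha
=i-b_{\alpha,i}\bigl(b_{\alpha,i}\,i+b_{\alpha,j}\,j\bigr)
=-\,b_{\alpha,i}b_{\alpha,j}\,j=j\, .
\]
Thus every transition out of $i$ lands on the opposite endpoint of an edge incident to $i$, and conversely each pair $(i,\alpha)$ with $i$ an endpoint of $\alpha$ gives exactly one such transition; this is precisely the set of directed edges of $DX$ with source $i$.

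Finally, starting from the vertex $z_0$ and inducting on the length of a directed path, this shows that every vertex of $\Gamma$ is a single $0$-cell of $X$ and that the edges of $\Gamma$ out of any vertex are exactly those of $DX$. Hence $\Gamma$ is the full directed subgraph of $DX$ on the vertices reachable from $z_0$, together with all edges on directed paths from $z_0$. Since $X$ is connected, $DX$ contains $i\to j$ and $j\to i$ for every edge $\{i,j\}$ of $X$, so it is strongly connected; therefore every vertex of $DX$ is reachable from $z_0$ and every edge of $DX$ lies on a directed path from $z_0$, and $\Gamma=DX$. The only step needing genuine care is the sign cancellation in the displayed computation: one must use that $(-1)^{\chi(\varepsilon_{\alpha,i})}$ equals the incidence number $b_{\alpha,i}=\pm 1$ (which is where the graph form $|X(\alpha,f)|=1$ of Hypothesis~\ref{hypo:decomp} enters) so that the $i$-components cancel and only the vertex $j$ survives; the rest is bookkeeping with the coset and with the strong connectivity of $DX$.
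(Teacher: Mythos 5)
Your proof is correct and takes essentially the same route as the paper: you compute the elementary transitions out of a vertex state, identify them with the directed edges $(i,\alpha)$ of $DX$ (using $|X(\alpha,f)|=1$ so that $(-1)^{\chi(\varepsilon_{\alpha,i})}=b_{\alpha,i}$ and the $i$-components cancel, leaving the opposite endpoint $j$), and then invoke connectedness of $X$ to conclude $\Gamma=DX$. The additional remarks about the coset being the augmentation-one chains and the strong connectivity of $DX$ are correct but only make explicit what the paper's shorter argument uses implicitly.
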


\begin{rem}
 In this case, we are implicitly
taking the finite sets $X(\alpha,f)$ to be singletons since
$b_{\alpha,f} = \pm 1$ in the case of graphs.
\end{rem}

\begin{proof}[Proof of Lemma \ref{lem:cycle_double}]
Let $\alpha$ be an edge of $X$ and write $\partial \alpha = j-i$
for the value of the boundary operator at $\alpha$, where
$i$ and $j$ are distinct vertices given by the endpoints of $\alpha$. Then the directed edge $(i,\alpha)$ determines an elementary transition from $i$ to $j$ given by the equation
\[
j = i  + b_{\alpha,i} \partial \alpha 
\]
where in this case $b_{\alpha,i}  =-1$. Similarly $(j,\alpha)$ provides
 an elementary transition 
from $j$ to $i$ given by 
\[
i = j  + b_{\alpha,j}  \partial \alpha 
\]
where $b_{\alpha,j}  =+1$. It is straightforward to check that every
elementary transition with source/target $i$ is given by the above.  
Since the initial
state is a vertex, the above also shows 
every other state arising from a sequence
of elementary transitions is also a vertex.  Furthermore, as $X$ is connected, 
every vertex can be reached by such a sequence. It follows that $\Gamma =DX$. 
\end{proof}

\subsection{The rates} 
Let $(\tau_D,\gamma)$ be a driving protocol. Then
$\gamma(t) := (E(t),W(t))$ where $E\: X_{d-1} \to \Bbb R$ and
$W\: X_d \to \Bbb R$ are one-parameter families of weights. 
Let $\beta > 0$ be a real number representing inverse temperature.

Let $e = (\alpha,f,\epsilon_{\alpha,f},z)$ be a directed edge of $\Gamma$.  The number
\begin{equation} \label{eqn:transition-rate}
k_{\alpha,f}(t) := e^{\beta(E_f(t)-W_\alpha(t))}
\end{equation}
will be taken as  the transition
 rate along $e$ at time $t$. In what follows, we sometimes denote the pair $(\alpha,f)$ by  $(\alpha_e,f_e)$.
Let the collection of such rates be denoted by $k_\bullet$. Then, the pair
\[
(\Gamma,k_\bullet)
\]
completes the description of the Markov CW chain.

\subsection{The master equation} 
The rates give rise to a time-dependent evolution operator $\cal H$ 
operating on the vector space $C_0(\Gamma;\Bbb R)$ of 0-chains,  
where for $z\in \Gamma_0$ we have
\begin{equation} \label{eqn:formulaH-chain} 
\cal H(z) := 
\sum_{\substack{e\in \Gamma_1\\ s(e) = z}}  
k_{\alpha_e,f_e}  \cdot (z - t(e))\, .
\end{equation}
Note that the sum is finite
since the vertices of $\Gamma$ have finite valence.

Consider the obvious embedding $C_0(\Gamma;\Bbb R)\subset C^0(\Gamma;\Bbb R)$
from 0-chains to 0-cochains i.e., functions $\Gamma_0 \to \Bbb R$ which we regard as
``distributions.''
Extend $\cal H$ to act on $C^0(\Gamma;\Bbb R)$  as follows: given
a distribution $p\: \Gamma_0 \to \Bbb R$ define
\begin{equation} \label{eqn:formulaH} 
\cal H(p)(z) = \sum_{w\in \Gamma_0}\cal H_{z,w}p(w)\, ,
\end{equation}
where $\cal H_{z,w}$ denotes the $(z,w)$-matrix entry of $\cal H$.
Again, the sum is finite since the number of non-trivial entries in every row and column  is finite.

The evolution of the process is described by the
{\it master equation}
\begin{equation} \label{eqn:real_master}
\dot p = -\tau_D \cal H p\, ,\quad  p(0) = p_0 \, ,
\end{equation}
where $p(t)$ is a one-parameter family of 0-cochains. In what
follows, we choose the initial distribution $p_0$ to be:
\[
p_0(z) = \begin{cases} 1 \qquad & z=z_0\, ,\\
0 &\text{otherwise,}
\end{cases}
\]
where $z_0 \in \Gamma_0$ is a fixed vertex.
There are  
technical issues with equation \eqref{eqn:real_master}, since 
   $C^0(\Gamma;\Bbb R)$ is usually infinite dimensional.
Fortunately,  the formal solution to \eqref{eqn:real_master} can be described using perturbation theory. 



\begin{ex} \label{ex:dim-one-process} Assume $\dim X = d = 1$. Then by Lemma \ref{lem:cycle_double},  $\Gamma = DX$.
Hence $C^0(\Gamma;\Bbb R) \cong C_0(X;\Bbb R)$ canonically. We choose the initial
state $z_0$ to be any vertex of $X$. In this case $\cal H$ is identified with the biased Laplacian 
$\partial \partial_{E,W}$ acting on $C_0(X;\Bbb R)$ and the process coincides with
the one of  \cite{Chernyak:algtop}.
\end{ex}

\subsection{The trajectory space}
\label{sec:reduction}

Let $\Gamma$ and $k_\bullet$ be as above.
A {\it trajectory of length $n$} 
consists of a directed path 
\[
z_0 @> e_1 >> z_1 @> e_2 >> \cdots @>e_{n-1} >> z_{n-1} @>e_{n}>> z_n 
\]
together with jump times $0 = t_0 \le 
t_1 \leq t_1 \leq t_2 \leq \cdots \leq t_n$, where
$e_k = (\alpha_k,f_k, \varepsilon_k,z_{k})$ is a directed edge of $\Gamma$ from
$z_{k}$ to $z_{k+1}$. We use the notation
\[
(z_\bullet,e_\bullet,t_\bullet)
\]
to refer to this trajectory.

Define the 
 {\it escape rate} at a vertex 
 $z\in \Gamma_0$ over the interval $[t,t']$ by the expression
  \begin{gather}
    \label{eqn:escape_rate}
  u_{z}(t, t') = \exp \left(- \displaystyle \sum_{e \in
  X(z)}\int_{t}^{t'} 
  \tau_D \, k_{\alpha_e,f_e}(s) ds \right)  \, ,
\end{gather}
where $X(z)\subset X_1$ is the set of directed edges having terminus $z$,
and $k_{\alpha_e,f_e}$ is the transition rate 
 across the directed edge  $e $ (cf.\ Eq~\eqref{eqn:transition-rate}).

\begin{defn} With respect to the above,
the {\it probability density} of the trajectory 
$(z_\bullet,e_\bullet,t_\bullet)$ is 
\[
  f[z_\bullet,e_\bullet,t_\bullet] := 
  \prod_{m=1}^{n} u_{z_m}(t_{m-1},t_{m}) 
  \prod_{m=1}^{n} \tau_D \, k_{\alpha_{e_m},f_{e_m}}(t_m) \,.
\]
\end{defn}
Finally, given that the process is at state $z_0$ at time $0$,
 the probability that 
the process is in state $z$ at time $t$ is 
\begin{equation}
  \label{eqn:prob_trajectory}
  P[z;t] :=  \\
  \sum_{n=0}^{\infty} 
  \int_0^t \!\! \int_0^{t_n}\!\! \cdots\!\! \int_0^{t_2} \!\! dt_{1} \cdots dt_n
  \sum_{\substack{(z_\bullet,e_\bullet,t_\bullet)\\z_n = z}}  f[z_\bullet,e_\bullet,t_\bullet]  
\end{equation}
where the summation on the far right runs over the set of trajectories of length $n$ that 
begin in $z_1$ and terminate in 
$z$.

\begin{prop} \label{prop:prob-measure} For every $t \ge 0$,
  the function $z\mapsto P[z;t]$ is a probability distribution.
  \end{prop}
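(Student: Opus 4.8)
The plan is to show that $P[\,\cdot\,;t]$ is nonnegative and sums to one. Nonnegativity is immediate from the defining formula \eqref{eqn:prob_trajectory}: every factor $u_{z_m}(t_{m-1},t_m)$ is a positive exponential, every transition rate $\tau_D\, k_{\alpha_{e_m},f_{e_m}}(t_m)$ is positive, the integration is over a region of positive measure, and the outer sum over trajectory types has nonnegative terms, so $P[z;t]\ge 0$ for every $z$ and $t$. The substance is therefore total mass. I would first argue convergence: since $\Gamma$ is locally finite, the number of length-$n$ directed paths issuing from $z_0$ is bounded by $N^n$ where $N$ is the maximum out-valence encountered (or, if the valence is unbounded over all of $\Gamma$, one restricts attention to the finite subgraph reachable in $n$ steps and still gets a finite bound for each fixed $n$); combined with the standard $t^n/n!$ volume of the time simplex $0\le t_1\le\cdots\le t_n\le t$ and a uniform bound on the rates over the compact interval $[0,t]$ (the $E_f, W_\alpha$ are continuous in $t$, hence bounded), the series \eqref{eqn:prob_trajectory} is dominated by $\sum_n (CN t)^n/n!<\infty$. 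This also licenses all the interchanges of sum and integral below.

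The core computation is to evaluate $\sum_{z}P[z;t]$. Summing \eqref{eqn:prob_trajectory} over all terminal states $z$ replaces the constrained inner sum by a sum over \emph{all} trajectories of length $n$ emanating from $z_0$. The key algebraic identity I would exploit is the "escape/jump" telescoping: differentiating $u_z(t',t)$ in its upper argument gives $\partial_t u_z(t',t) = -\bigl(\sum_{e\in X(z)}\tau_D k_{\alpha_e,f_e}(t)\bigr)u_z(t',t)$, i.e.\ the escape factor exactly integrates the total outgoing rate. Concretely, set $S(t):=\sum_z P[z;t]$ and show $S(0)=1$ (only the $n=0$ term survives, contributing $u_{z_0}(0,0)=1$) and $\dot S(t)=0$. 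To get $\dot S\equiv 0$, differentiate \eqref{eqn:prob_trajectory} term by term: the derivative of the $n$-th term's last escape factor $u_{z_n}(t_n,t)$ produces $-(\text{total rate out of }z_n)$ times the $n$-th term, while the derivative hitting the upper limit $t$ of the outermost integral $\int_0^t dt_n$ produces a new jump at time $t$, which is precisely the $(n+1)$-st term with $t_{n+1}=t$, but \emph{summed over the edge taken}, i.e.\ $+(\text{total rate out of }z_n)$ times the $n$-th term. These cancel in pairs across consecutive $n$, so $\dot S(t)=0$. Hence $S(t)\equiv S(0)=1$.

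I expect the main obstacle to be bookkeeping in the telescoping step rather than any conceptual difficulty: one must be careful that when the outermost time-integral's derivative inserts a jump at time $t=t_{n+1}$, the resulting expression genuinely matches the $(n+1)$-term of $S$ after summing over which directed edge $e$ with $s(e)=z_n$ is traversed — this is where the sum over trajectories (not just over terminal states) is essential, and where the definition of $f[\,\cdot\,]$ as a product of escape factors and rates is used in exactly the form given. A clean way to package this, avoiding term-by-term differentiation subtleties, is to recognize \eqref{eqn:prob_trajectory} as the Dyson/Duhamel series solving the master equation \eqref{eqn:real_master} with the decomposition $\cal H = \cal D - \cal K$ into its diagonal (escape) part $\cal D$ and off-diagonal (jump) part $\cal K$, write $P[\,\cdot\,;t] = \sum_n \int e^{-\int\cal D}(\tau_D\cal K)e^{-\int\cal D}\cdots$, and then check that the constant cochain $\mathbf 1$ satisfies $\cal H^{\mathsf T}\mathbf 1 = 0$ (each column of $\cal H$ sums to zero, directly from \eqref{eqn:formulaH-chain}), so that $\langle \mathbf 1, p(t)\rangle$ is conserved; combined with convergence of the series and $\langle\mathbf 1,p_0\rangle=1$, this gives $\sum_z P[z;t]=1$. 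Either route completes the proof.
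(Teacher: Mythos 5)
Your proposal is correct and, in its second packaging, is essentially the paper's own argument: the paper likewise bounds the $n$-th term of \eqref{eqn:prob_trajectory} by $(xt)^n/n!$ to obtain convergence, identifies the series as the perturbative (Dyson) solution of the master equation via the diagonal/off-diagonal split of $-\tau_D\mathcal{H}$, and concludes $\sum_z P[z;t]=1$ from the vanishing of the column sums of $\mathcal{H}$, i.e.\ conservation of the all-ones functional applied to the initial condition. Your first, term-by-term telescoping route is only a hands-on variant of this same conservation argument, so nothing essentially different is involved.
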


\begin{proof} We first show that series \eqref{eqn:prob_trajectory} converges.
Note that the series consists of positive terms.
As there are finitely many rates, and the numbers $t_j$ are bounded, it follows that the $n^{\mathrm{th}}$
term of \eqref{eqn:prob_trajectory} is bounded by 
\[
x^n \int_0^t \!\! \int_0^{t_n}\!\! \cdots\!\! \int_0^{t_2} \!\! dt_{1} \cdots dt_n 
= \frac{(xt)^n}{n!}\, ,
\]
for a judicious choice of $x > 0$ (which depends on $\beta$ and $\tau_D$). The series $\sum \frac{(xt)^n}{n!}$ converges to $e^{xt}$.
Hence, by the comparison test, the series \eqref{eqn:prob_trajectory}  converges.

To conclude the proof, we need to explain why the series
\[
\sum_z P[z;t]
\]
converges to 1.  The expression \eqref{eqn:prob_trajectory} arises
from perturbation theory. The idea is to show that the formal solution $\varrho(t)$
to the master equation
is a probability distribution for each $t$. Set $A := -\tau_D \cal H$. With this notation the master equation becomes $\dot p = Ap$ with $p(0) = p_0$.

Rewrite $A$ as $A^0 +  A^1$, where $A^0$ is the diagonal matrix and 
$A^1$  equals 0 along the diagonal. Now set $A_\epsilon = A^0 + \epsilon A^1$.  
We then consider the equation 
\begin{equation}\label{eqn:eps-p1}
\dot p= A_\epsilon p,\quad p(0) = p_0\, ,
\end{equation}
where we assume the solution has the form
\begin{equation}\label{eqn:eps-p2}
p^0 +\epsilon p^1 + \epsilon^2 p^2 + \cdots \, ,
\end{equation}
where $p^0$ solves the equation  $\dot q = A_0 q$.  If we substitute the expression \eqref{eqn:eps-p2} into the equation \eqref{eqn:eps-p1}, expand both sides, equate the coefficients of $\epsilon^j$ for $j = 0,1,\dots$, and set $\epsilon = 1$, we tediously but
straightforwardly arrive
at the expression \eqref{eqn:prob_trajectory}.
Hence,  \eqref{eqn:prob_trajectory} is the formal solution to the master equation
\eqref{eqn:real_master}.

Let $u$ be the row vector whose value at every vertex of $\Gamma$ is $1$. Multiplying both sides of \eqref{eqn:real_master} by $u$ on the left, we obtain
\[
u\cdot \dot p = u\cdot (Ap) = (u A)\cdot p = 0 \, ,
\]
since the sum of the entries in any column of $A$ vanishes. 
As $u\cdot \dot p =0$, we infer that the formal solution $p(t)$ is such that
$u\cdot p(t) =c$ for some constant $c$. Since $u\cdot p(0) =1$, it follows that $c=1$. Consequently, the formal solution $\varrho(t) := P[z,t]$ 
is a probability distribution for all $t$.
\end{proof}

The proof of Proposition \ref{prop:prob-measure} also established the following result.

\begin{cor}\label{cor:prob-measure} The function $P[z,t]$ is the 
solution to the
master equation \eqref{eqn:real_master}.
\end{cor}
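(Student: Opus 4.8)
The statement is essentially a byproduct of the proof of Proposition~\ref{prop:prob-measure}, so the plan is to isolate that byproduct and phrase it cleanly. Recall that in that proof one writes $A := -\tau_D\mathcal H = A^0 + A^1$ with $A^0$ the diagonal part, introduces $A_\epsilon = A^0 + \epsilon A^1$, and seeks a solution of $\dot p = A_\epsilon p$, $p(0) = p_0$, of the form $p = \sum_{j\ge 0}\epsilon^j p^j$. Equating coefficients of $\epsilon^j$ gives $\dot p^0 = A^0 p^0$ with $p^0(0) = p_0$, and $\dot p^j = A^0 p^j + A^1 p^{j-1}$ with $p^j(0) = 0$ for $j \ge 1$. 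Solving these recursively by variation of parameters --- the diagonal propagator $e^{\int_s^t A^0}$ supplies the escape factors $u_z(\,\cdot\,,\,\cdot\,)$ and $A^1$ supplies the jump rates $\tau_D k_{\alpha_e,f_e}$ --- and then setting $\epsilon = 1$ produces exactly the time-ordered series \eqref{eqn:prob_trajectory} defining $P[z;t]$. Thus, by construction, $t \mapsto P[\,\cdot\,;t]$ is the perturbative (``formal'') solution of the master equation~\eqref{eqn:real_master}, and the $n = 0$ term gives the initial condition $P[z;0] = p_0(z)$.

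The remaining point is to upgrade ``formal solution'' to ``solution'': one must justify differentiating the series \eqref{eqn:prob_trajectory} term by term in $t$. For this I would reuse the estimate already established in Proposition~\ref{prop:prob-measure}, namely that the $n$-th summand of \eqref{eqn:prob_trajectory} is bounded by $(xt)^n/n!$ for a constant $x = x(\beta,\tau_D)$, uniformly in $z \in \Gamma_0$ and in $t$ on any compact interval. Because the matrix $A$ has uniformly bounded entries (finitely many distinct rates, and boundedly many nonzero entries per row and column), the partial sums of \eqref{eqn:prob_trajectory} and their $t$-derivatives are both dominated by the partial sums of a convergent exponential series, hence converge locally uniformly; this legitimizes termwise differentiation. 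Applying $\tfrac{d}{dt}$ to the series and invoking the recursion for the $p^j$ then collapses the telescoping sum to $\dot P = A P = -\tau_D\mathcal H P$, which is \eqref{eqn:real_master}.

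The only genuine obstacle is this interchange of differentiation with the infinite sum and the nested integrals, since $C^0(\Gamma;\R)$ is infinite dimensional; but the factorial decay $(xt)^n/n!$ coming from the time-ordered simplices of integration is exactly what makes it harmless. (One could alternatively bypass perturbation theory entirely and verify directly that \eqref{eqn:prob_trajectory} satisfies the integral version of \eqref{eqn:real_master} by splitting each trajectory at its final jump --- a Chapman--Kolmogorov-type argument --- but the perturbative route simply recycles the computation already present in the proof of Proposition~\ref{prop:prob-measure}.)
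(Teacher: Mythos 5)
Your proposal follows essentially the same route as the paper: Corollary~\ref{cor:prob-measure} is extracted from the perturbation-theory computation in the proof of Proposition~\ref{prop:prob-measure}, where the decomposition $A = A^0 + A^1$, the series in $\epsilon$, and setting $\epsilon = 1$ identify \eqref{eqn:prob_trajectory} as the (formal) solution of \eqref{eqn:real_master}. Your added justification of termwise differentiation via the $(xt)^n/n!$ bound is a correct and welcome tightening of a step the paper leaves implicit, but it does not change the argument's substance.
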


\subsection{Expectation}  The {\it expectation} of a $0$-cochain
$p\: \Gamma_0 \to \Bbb R$ is the formal sum
\[
  \mathbb{E}[p] := \sum_{z\in \Gamma_0} p(z) z\, .
\]
We will give criteria for deciding when such an expression exists as an element of $Z_{d-1}(X;\Bbb R)$.

Let $\Sigma = 2^{\Gamma_0}$ be the
 $\sigma$-algebra  of all subsets of $\Gamma_0$. For fixed $b\in X_{d-1}$,  
 the function
 \[
 \mu_b (A) := \sum_{z \in A} \langle z,b\rangle, \qquad A\in \Sigma
 \]
is a signed measure on $(\Gamma_0,\Sigma)$.

For a  function $p\: X_{d-1}\to \Bbb R$, i.e., a 0-cochain, we consider the series
\begin{align}\label{eqn:b-converge}
 \sum_{z\in \Gamma_0} p(z) \langle z,b\rangle  &:= \int\!\! p \, d\mu_b\, \\
 & := \int\!\! p \, d\mu^+_b - \int\!\! p \, d\mu^-_b\, , \nonumber
\end{align}
(i.e., the Lebesgue integral over a discrete measure space), where 
$
\mu_b = \mu_b^+ - \mu_b^-
$
is the Hahn-Jordan measure decomposition~\cite{Halmos:Measure} of $\mu_b$, in which  $\mu_b^\pm$
are the unsigned measures
\begin{align*}
\mu_b^+(A) \, &:= \,  \sup \{\mu(B)\, |\, B\subset A\} \, , \\
\mu_b^-(A) \, &:= \,  \sup \{-\mu(B)\, |\, B\subset A\}\, .
\end{align*} 

\begin{defn} A 0-cochain $p\in C^0(\Gamma;\Bbb R)$ is {\it good} if the integrals
\[
\int \! p\, d\mu^\pm_b \, ,
\]
are finite for all $b\in X_{d-1}$. In particular,
the series \eqref{eqn:b-converge} converges for good
0-cochains $p$ and for every $b\in X_{d-1}$.
\end{defn}

\begin{lem} \label{lem:expect-converge} If $p$ is good, 
  then its expectation $\Bbb E[p]$
defines an element of $Z_{d-1}(X;\Bbb R)$.
\end{lem}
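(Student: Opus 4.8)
The plan is to work in the finite-dimensional vector space $C_{d-1}(X;\Bbb R)$, equipped with the standard inner product, so that $X_{d-1}$ is an orthonormal basis. For $z \in \Gamma_0 \subset Z_{d-1}(X;\Bbb Z)$ one has $z = \sum_{b \in X_{d-1}} \langle z, b\rangle\, b$; hence the $b$-th coordinate of the formal sum $\Bbb E[p] = \sum_z p(z) z$ is the series $\sum_{z \in \Gamma_0} p(z)\langle z, b\rangle = \int p\, d\mu_b$. Since $p$ is good, $\int |p|\, d\mu_b^{\pm} < \infty$ for every $b \in X_{d-1}$, and because $|\mu_b| = \mu_b^+ + \mu_b^-$ with $|\mu_b|(\{z\}) = |\langle z,b\rangle|$, this is precisely the statement that $\sum_{z}|p(z)|\,|\langle z,b\rangle| < \infty$.

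From this I would deduce that the series $\sum_z p(z) z$ converges absolutely in the normed space $C_{d-1}(X;\Bbb R)$: indeed $\|p(z)z\| = |p(z)|\,\|z\| \le |p(z)| \sum_{b}|\langle z,b\rangle|$, so by nonnegativity one may interchange the sums and obtain $\sum_z \|p(z) z\| \le \sum_{b\in X_{d-1}}\big(\sum_z |p(z)|\,|\langle z,b\rangle|\big) < \infty$, a finite sum of finite quantities. A finite-dimensional normed space is complete, so the series converges (unconditionally) to a well-defined element $\Bbb E[p] \in C_{d-1}(X;\Bbb R)$, whose $b$-coordinate is $\int p\, d\mu_b$.

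It remains to check that $\Bbb E[p] \in Z_{d-1}(X;\Bbb R)$, i.e., $\partial\Bbb E[p] = 0$. The boundary map $\partial\colon C_{d-1}(X;\Bbb R) \to C_{d-2}(X;\Bbb R)$ is linear, hence continuous between finite-dimensional spaces, so it commutes with the convergent series; and every vertex $z \in \Gamma_0$ lies in $Z_{d-1}^{z_0}(X;\Bbb Z)$ by Definition~\ref{defn:cycle_incidence}, whence $\partial z = 0$. Therefore
\[
\partial\Bbb E[p] \;=\; \sum_{z \in \Gamma_0} p(z)\,\partial z \;=\; 0\, ,
\]
which completes the argument.

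I do not anticipate a genuine obstacle: the content of the lemma is that the bookkeeping built into the definition of a good $0$-cochain — finiteness of $\int p\, d\mu_b^{\pm}$ — upgrades the a priori only-formal sum $\sum_z p(z) z$ to an absolutely convergent series in a fixed finite-dimensional space, after which continuity of $\partial$ does the rest. The only point requiring slight care is to use ``good'' in the strong form controlling $\int |p|\, d\mu_b^{\pm}$ (equivalently, $p \in L^1(\mu_b^+) \cap L^1(\mu_b^-)$), so that the rearrangement of the double sum over $z$ and $b$, and the termwise application of $\partial$, are both legitimate.
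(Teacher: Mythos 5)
Your proposal is correct and follows essentially the same route as the paper: expand each state $z$ in the finite basis $X_{d-1}$, use goodness to get convergence of each coordinate series $\sum_z p(z)\langle z,b\rangle$, and conclude since the outer sum over $b$ is finite and each $z$ is a cycle. Your added details (absolute convergence in the finite-dimensional space and continuity of $\partial$) merely make explicit what the paper dispatches with ``clearly, this element is a cycle.''
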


\begin{proof}  By the identity $z = \sum_b \langle z,b \rangle\, b$, we infer
\[
 \Bbb E[p] = \sum_{z}  p(z)z = \sum_b \left(\sum_z   p(z)\langle z,b \rangle\right) b\, ,
\]
where the outer summation is finite.  By hypothesis, the inner summation converges.
It follows that $\Bbb E[p]$ defines an element of $C_{d-1}(X;\Bbb R)$. But clearly, this element is a cycle.
\end{proof}

\begin{ex} Assume $\dim X = 1$ and choose $z_0$ to be any vertex of $X$. 
Then $\Gamma = DX$ by Lemma \ref{lem:cycle_double}
and every $p \in C^0(\Gamma;\Bbb R) \cong C_0(X;\Bbb R)$ is good. With respect to this
identification, the expectation $\Bbb E\: C^0(\Gamma;\Bbb R) \to C_0(X;\Bbb R)$ is the identity homomorphism.
\end{ex}

We now return to $\varrho(t) := P[z,t]$, the formal solution of the master equation \eqref{eqn:real_master}. Set
\[
\rho(t) := \Bbb E[\varrho(t)] \, ,
\]
so that $\rho(t)$ is the expected value of $P[z,t]$ with respect to $z$.

\begin{rem} \label{rem:norms}
  With the identity $z = \sum_{b} \langle z,b \rangle\, b$, we put the following norm 
  on cycles:
  \[
    ||z|| = \sum_{b\in X_{d-1}} | \langle z , b \rangle | \, . 
  \]
\end{rem}

\begin{lem} \label{lem:rho-converge} 
The 1-parameter of family of 0-cochains $\varrho(t)$ is good.
In particular, the expected value $\rho(t) =E[\varrho(t)]$ defines a 1-parameter family of
elements of  $Z_{d-1}(X;\Bbb R)$.
\end{lem}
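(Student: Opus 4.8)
The plan is to control the tail of the series $\sum_z P[z;t]\,|\langle z,b\rangle|$ uniformly in $t$ on compact time intervals, using the trajectory expansion \eqref{eqn:prob_trajectory} together with the norm $||{-}||$ of Remark~\ref{rem:norms}. The essential point is that, along a single elementary transition $z\to z'=z-(-1)^{\chi}\langle z,f\rangle\partial\alpha$, the norm grows in a controlled way: one has $||z'||\le ||z||+|\langle z,f\rangle|\cdot||\partial\alpha||\le (1+c)||z||$, where $c:=\max_{\alpha\in X_d}||\partial\alpha||$ depends only on $X$. Hence after a trajectory of length $n$ the norm of the terminal state is at most $(1+c)^n||z_0||$. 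This is the crucial structural input that turns the analytic estimate from the proof of Proposition~\ref{prop:prob-measure} into a bound good enough for absolute convergence of the expectation.

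Concretely, first I would fix $b\in X_{d-1}$ and $T>0$ and estimate, for $t\in[0,T]$,
\[
\sum_{z\in\Gamma_0}P[z;t]\,|\langle z,b\rangle|
\le \sum_{z\in\Gamma_0}P[z;t]\,||z||
= \sum_{n=0}^{\infty}\int_0^t\!\!\cdots\!\!\int_0^{t_2}dt_1\cdots dt_n
\sum_{\substack{(z_\bullet,e_\bullet,t_\bullet)\\ \text{length }n}} f[z_\bullet,e_\bullet,t_\bullet]\,||z_n||\,,
\]
where the interchange of summation is legitimate because all terms are nonnegative. Now bound $||z_n||\le (1+c)^n||z_0||$ and, exactly as in the proof of Proposition~\ref{prop:prob-measure}, bound the product of escape rates by $1$ and each factor $\tau_D k_{\alpha_{e_m},f_{e_m}}(t_m)$ by some $x>0$ (depending on $\beta,\tau_D$), while the number of trajectories of length $n$ emanating from $z_0$ is at most $N^n$, where $N$ is the maximal out-valence of $\Gamma$ (finite, by local finiteness). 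The $n$-th term is then at most $||z_0||\,\big((1+c)Nx\big)^n \int_0^t\!\cdots\!\int_0^{t_2}dt_1\cdots dt_n = ||z_0||\,\dfrac{\big((1+c)Nxt\big)^n}{n!}$, and summing over $n$ gives the finite bound $||z_0||\,e^{(1+c)NxT}$. In particular each of $\int p\,d\mu_b^\pm$ is finite (they are each dominated by the same series since $\mu_b^\pm(A)\le \sum_{z\in A}|\langle z,b\rangle|$), so $\varrho(t)$ is good for every $t$; Lemma~\ref{lem:expect-converge} then yields $\rho(t)=\Bbb E[\varrho(t)]\in Z_{d-1}(X;\Bbb R)$.

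The main obstacle is not any single estimate but making sure the bookkeeping of the trajectory sum is honest: one must verify that the number of length-$n$ directed paths from $z_0$ is genuinely finite and grows at most geometrically (this uses that $\Gamma$ is locally finite, which was noted right after \eqref{eqn:formulaH-chain}), and that the Fubini/Tonelli interchanges used to pull $||z_n||$ inside and to split into $\mu_b^+$ and $\mu_b^-$ are justified purely by nonnegativity. A secondary point worth stating carefully is the geometric norm-growth lemma $||z'||\le(1+c)||z||$ under an elementary transition, since it is the one place where the combinatorics of $X$ (rather than soft analysis) enters; everything else then parallels the convergence argument already carried out for Proposition~\ref{prop:prob-measure}. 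No uniformity in $t$ beyond compact intervals is needed for the statement as phrased, so the exponential-in-$T$ bound is entirely adequate.
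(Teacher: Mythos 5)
Your proposal is correct and follows essentially the same route as the paper: both reduce to bounding $\sum_{z}\varrho_t(z)\,||z||$ by comparing the trajectory expansion from the proof of Proposition~\ref{prop:prob-measure} with the exponential series, using the uniform bound on the out-valence of $\Gamma$ together with the factorial decay of the iterated time integrals. The only cosmetic difference is that you sum over trajectories directly and state the norm-growth estimate $||z'||\le(1+c)\,||z||$ explicitly, whereas the paper filters states by graph distance $u(z)=n$ and absorbs that bound into its appeal to Taylor's remainder theorem.
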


\begin{proof} Note the inequality $  |\langle z,b \rangle| \le ||z||$ holds for every $b\in X_{d-1}$.
Set $\rho_t := \rho(t)$, and recall that we have a fixed a vertex $z_0 \in \Gamma_0$ in
defining $\varrho_t$ via~\eqref{eqn:real_master}. Then for each $t$, it will be  enough to prove that
the series
\[
\sum_{z \in \Gamma_0} \varrho_t(z) ||z||
\]
converges. We filter $z \in \Gamma_0$ by the number of edges in a minimal
path from $z$ to $z_0$; call this number $u(z)$. The previous
display can then be rewritten as
\[
\sum_{n=0}^\infty\sum_{u(z) = n} \varrho_t(z)||z|| \, .
\]

The graph $\Gamma$ possesses the following global finiteness property: there
is a number $c >0$ such that the valence of any vertex of $\Gamma$ is at most $c$.
In particular, the number of directed paths of length $n$ which start at a given vertex is at most $c^n$.  Using this observation, the proof of Proposition \ref{prop:prob-measure}, and Taylor's remainder theorem, there
is a $w>0$ (which depends on $t,c,\beta$ and $\tau_D$) such that
\[
\sum_{u(z) = n} \varrho_t(z) ||z||
   \,\, \le \,\,\frac{e^{w}w^n}{n!}\, .
\]
Consequently,
\[
\sum_{n=0}^\infty\sum_{u(z) = n}  \varrho_t(z)||z||  \,\, \le \,\,
e^{w}\sum_{n=0}^\infty 
\frac{w^n}{n!} = e^{2w}\, . \qedhere
\]
 \end{proof} 

\subsection{The dynamical equation}

\begin{defn} \label{defn} For a periodic driving protocol $(\tau_D,\gamma)$ with 
$\gamma(t) = (E(t),W(t))$, the {\it dynamical operator} 
\[
H(t)\: C_{d-1}(X;\Bbb R) \to C_{d-1}(X;\Bbb R) 
\]
is defined by
\[
H = \partial e^{-\beta W}\partial^\ast e^{\beta E}\, .
\]
\end{defn}

\begin{rem} If $f \in X_{d-1}$ then 
\begin{align} \label{eqn:equation-for-H}
H(f) \quad &= \quad
\sum_{\alpha\in X_d} k_{\alpha,f} \langle f,\partial \alpha\rangle\partial \alpha\\
& = \sum_{\substack{\alpha\in X_d\\\epsilon_{\alpha,f} \in X(\alpha,f)}} 
(-1)^{\chi(\epsilon_{\alpha,f})}k_{\alpha,f} \partial \alpha \nonumber \, ,
\end{align}
where $k_{\alpha,f} := e^{\beta (E_f -W_\alpha)}$.
\end{rem}

\begin{defn}  The {\it dynamical equation} is 
\begin{equation} \label{eqn:dynamical_eqn}
    \dot p = -\tau_D  H p \, . 
\end{equation}
\end{defn}

In the above, it is implicitly assumed that the initial value 
$p(0) \in C_{d-1}(X;\Bbb R)$ of a solution is a cycle representing a fixed homology class.

\begin{lem} \label{lem:H&H} If $p\in C^0(\Gamma;\Bbb R)$ is good, 
then $\Bbb E[\cal H(p)]$ defines an element of $Z_{d-1}(X;\Bbb R)$. Furthermore, the following identity holds formally:
\[
\Bbb E[\cal H(p)] = H(\Bbb E[p])\, .
\]
\end{lem}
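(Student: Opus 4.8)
The plan is to compute $\Bbb E[\cal H(p)]$ directly from the defining formulas and match it term-by-term against $H(\Bbb E[p])$. First I would recall that, for a good $0$-cochain $p$, the expectation $\Bbb E[p] = \sum_z p(z) z$ lies in $Z_{d-1}(X;\Bbb R)$ by Lemma~\ref{lem:expect-converge}, and that by the identity $z = \sum_b \langle z,b\rangle b$ we may work coefficient-wise: it suffices to check, for each fixed $(d-1)$-cell $b$, the scalar identity obtained by pairing both sides with $b$. Next I would expand the left-hand side using the matrix description~\eqref{eqn:formulaH} of $\cal H$, so that $\cal H(p)(z) = \sum_{w} \cal H_{z,w} p(w)$, where the nonzero entries come from the directed edges of $\Gamma$ according to~\eqref{eqn:formulaH-chain}: an edge $e = (\alpha,f,\varepsilon_{\alpha,f},z)$ with source $z$ contributes the rate $k_{\alpha,f}$ to the $(z,z)$-entry and $-k_{\alpha,f}$ to the $(t(e),z)$-entry. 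The key move is then a Fubini-type reindexing: instead of summing over target states $z$ and then over incoming edges, sum over source states $w$ and over the edges emanating from $w$. An edge out of $w$ with data $(\alpha,f,\varepsilon_{\alpha,f},w)$ requires $\langle w,f\rangle \neq 0$ and $\langle \partial\alpha, f\rangle \neq 0$, has target $w' = w - (-1)^{\chi(\varepsilon_{\alpha,f})}\langle w,f\rangle\,\partial\alpha$, and its net contribution to $\cal H(p)$ as a chain is $k_{\alpha,f}\,p(w)\,(w - w') = (-1)^{\chi(\varepsilon_{\alpha,f})} k_{\alpha,f}\, p(w)\,\langle w,f\rangle\,\partial\alpha$.

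Having done this reindexing, I would sum over $w$ first. For fixed $(\alpha,f,\varepsilon_{\alpha,f})$ the sum over all admissible $w$ of $p(w)\langle w,f\rangle$ is exactly $\langle \Bbb E[p], f\rangle$ (the restriction $\langle w,f\rangle\neq 0$ is harmless, since the vanishing terms contribute nothing), which converges because $p$ is good. Then summing over $\varepsilon_{\alpha,f} \in X(\alpha,f)$ collapses $\sum_{\varepsilon} (-1)^{\chi(\varepsilon)}$ to the incidence number $b_{\alpha,f} = \langle \partial\alpha, f\rangle$ by Hypothesis~\ref{hypo:decomp}, so the total becomes
\[
\Bbb E[\cal H(p)] \;=\; \sum_{\alpha\in X_d}\,\sum_{f\in X_{d-1}} k_{\alpha,f}\,\langle \partial\alpha,f\rangle\,\langle \Bbb E[p], f\rangle\,\partial\alpha\, .
\]
On the other side, applying $H = \partial\, e^{-\beta W}\partial^\ast e^{\beta E}$ to $\Bbb E[p] = \sum_f \langle\Bbb E[p],f\rangle\, f$ and using Remark~\eqref{eqn:equation-for-H}, which gives $H(f) = \sum_\alpha k_{\alpha,f}\langle f,\partial\alpha\rangle\,\partial\alpha$, produces precisely the same double sum. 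This yields $\Bbb E[\cal H(p)] = H(\Bbb E[p])$, and in particular the right-hand side is a real $(d-1)$-cycle (being in the image of $\partial$ composed with things, it is automatically a boundary added to nothing — more simply, $H$ maps $C_{d-1}$ into $\operatorname{im}\partial \subset C_{d-1}$, and we only need that it lands in $Z_{d-1}$, which it does since $Z_{d-1} = C_{d-1}$ is not true in general, but $\Bbb E[\cal H(p)]$ is manifestly a cycle as a difference of elements of $\Bbb E$ of good cochains — this is the content of the first sentence of the lemma, handled by Lemma~\ref{lem:expect-converge} once we know $\cal H(p)$ is good).

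The main obstacle is the bookkeeping of convergence and the legitimacy of the Fubini interchange: one is manipulating a doubly-infinite sum (over states $z$ and over $b$), and the identity is only asserted to hold \emph{formally}, so I would be careful to say that the rearrangement is justified coefficient-wise against each $b$, where at that level only finitely many edge-types $(\alpha,f,\varepsilon)$ occur and the remaining sum $\sum_w p(w)\langle w,f\rangle$ converges absolutely by goodness. A secondary point to verify is that $\cal H(p)$ is itself good whenever $p$ is; this follows because $\cal H(p)(z)$ is a finite combination of values $p(w)$ for $w$ in a bounded-distance neighborhood of $z$ with uniformly bounded coefficients (bounded valence $c$ and finitely many rates), so the tail estimates in the proof of Lemma~\ref{lem:rho-converge} apply verbatim with an adjusted constant, giving finiteness of $\int \cal H(p)\, d\mu_b^\pm$ for every $b$. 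With goodness of $\cal H(p)$ in hand, Lemma~\ref{lem:expect-converge} immediately gives $\Bbb E[\cal H(p)] \in Z_{d-1}(X;\Bbb R)$, completing the proof.
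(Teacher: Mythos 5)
Your proposal is correct and, at its core, is the same argument as the paper's: the paper's proof consists precisely of the observation that, expanding $\mathcal{H}$ via \eqref{eqn:formulaH} and $H$ via \eqref{eqn:equation-for-H}, both $\mathbb{E}[\mathcal{H}(p)]$ and $H(\mathbb{E}[p])$ are given by the double sum $\sum_{z\in\Gamma_0}\sum_{e\,:\,s(e)=z}(-1)^{\chi(\varepsilon_{\alpha_e,f_e})}k_{\alpha_e,f_e}\,p(z)\,\langle z,f_e\rangle\,\partial\alpha_e$; your Fubini-type reindexing over source states and outgoing edges, with Hypothesis~\ref{hypo:decomp} collapsing the $\varepsilon$-sum to $b_{\alpha,f}$, is exactly this identification, and your coefficient-wise absolute-convergence justification of the rearrangement (finitely many triples $(\alpha,f,\varepsilon)$, and $\sum_w|p(w)\langle w,f\rangle|<\infty$ by goodness) is a welcome sharpening of what the paper leaves under the word ``formally.'' The one place you diverge is the first assertion. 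The paper gets it with no extra work: $\mathbb{E}[p]$ converges by Lemma~\ref{lem:expect-converge}, and $H$ is an operator on the finite-dimensional space $C_{d-1}(X;\mathbb{R})$, so $H(\mathbb{E}[p])$ is a well-defined element lying in the image of $\partial\partial^*_{E,W}$, hence in $B_{d-1}(X;\mathbb{R})\subset Z_{d-1}(X;\mathbb{R})$ (your parenthetical gropes toward this but is muddled, though harmlessly so). Your alternative---showing $\mathcal{H}(p)$ is itself good and then citing Lemma~\ref{lem:expect-converge}---is true, but the justification you give is misattributed: the tail estimates in the proof of Lemma~\ref{lem:rho-converge} are specific to the perturbative solution $\varrho_t$ (the count of $c^n$ paths of length $n$ against the factor $w^n/n!$) and do not apply verbatim to an arbitrary good cochain. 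The correct, easy estimate is direct: each state meets boundedly many edges whose rates are uniformly bounded on $[0,1]$, and if $z=t(e)$ with $w=s(e)$ then $|\langle z,b\rangle|\le(1+\max_\alpha\|\partial\alpha\|)\,\|w\|$, while goodness of $p$, summed over the finitely many $b\in X_{d-1}$, gives $\sum_w|p(w)|\,\|w\|<\infty$; together these bound $\sum_z|\mathcal{H}(p)(z)|\,|\langle z,b\rangle|$ for every $b$. Either insert that estimate in place of the appeal to Lemma~\ref{lem:rho-converge}, or simply adopt the paper's continuity shortcut.
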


\begin{proof} If $p$ is good, then $\Bbb E[p]$ converges
to an element of $Z_{d-1}(\Gamma;\Bbb R)$ (cf.~Lemma \ref{lem:expect-converge}).
The linear transformation $H$ is continuous since it acts on a finite dimensional vector space. It follows that $H(\Bbb E[p])$ also converges.
By a straightforward calculation using \eqref{eqn:formulaH} and
\eqref{eqn:equation-for-H}, both $\Bbb E[\cal H(p)]$ and $H(\Bbb E[p])$ 
are  given by the expression
\[
\sum_{z\in \Gamma_0}\sum_{\substack{e\in \Gamma_1\\z = s(e)}} (-1)^{\chi(\epsilon_{\alpha_e,f_e})} k_{\alpha_e,f_e}p(z) \langle f_e,z\rangle
 \partial \alpha_e \, . 
\]
Hence, $\Bbb E[\cal H(p)] = H(\Bbb E[p])$ and $\Bbb E[\cal H(p)]$ is convergent. In particular, $\Bbb E[\cal H(p)]$ defines an element of $Z_{d-1}(X;\Bbb R)$.
\end{proof}

From this last result we readily deduce Theorem \ref{bigthm:expectation}:

\begin{cor}\label{thm:dynamical_eqn} The family of cycles 
$\rho(t) \in Z_{d-1}(X;\Bbb R)$ is the unique solution to the dynamical equation \eqref{eqn:dynamical_eqn} having initial value $z_0$.
\end{cor}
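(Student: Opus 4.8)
The plan is to apply the expectation operator $\Bbb E$ to the master equation and then invoke Lemma~\ref{lem:H&H}. Recall that $\varrho(t) = P[z,t]$ is the formal solution of \eqref{eqn:real_master} with $\varrho(0) = z_0$ (Corollary~\ref{cor:prob-measure}), and that $\varrho(t)$ is good for every $t$ (Lemma~\ref{lem:rho-converge}). Lemma~\ref{lem:H&H} then gives the identity
\[
\Bbb E[\cal H(\varrho(t))] = H(\Bbb E[\varrho(t)]) = H(\rho(t)) \in Z_{d-1}(X;\Bbb R)\, .
\]
Hence, \emph{provided} we may interchange $\Bbb E$ with the time derivative, applying $\Bbb E$ to both sides of $\dot\varrho = -\tau_D\cal H\varrho$ yields $\dot\rho = -\tau_D H\rho$, which is precisely the dynamical equation \eqref{eqn:dynamical_eqn}. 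The initial value is immediate: $\varrho(0)$ is the indicator $0$-cochain of the vertex $z_0\in\Gamma_0$, so $\Bbb E[\varrho(0)] = z_0$, regarded as an element of $Z_{d-1}(X;\Bbb R)$.

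The substantive point is to justify $\tfrac{d}{dt}\Bbb E[\varrho(t)] = \Bbb E[\tfrac{d}{dt}\varrho(t)]$. Since $\Bbb E[\varrho(t)] = \sum_{b\in X_{d-1}}\big(\sum_{z\in\Gamma_0}\varrho_t(z)\langle z,b\rangle\big)\, b$ is a \emph{finite} sum over $b$, it is enough to differentiate each inner series $\sum_z\varrho_t(z)\langle z,b\rangle$ term by term. For this I would return to the explicit perturbative expression \eqref{eqn:prob_trajectory} for $\varrho_t$, together with the estimates from the proofs of Proposition~\ref{prop:prob-measure} and Lemma~\ref{lem:rho-converge}: filtering the states $z$ by their combinatorial distance $u(z)$ from $z_0$ and using the uniform valence bound $c$ for $\Gamma$, both the partial sums $\sum_{u(z)=n}\varrho_t(z)\langle z,b\rangle$ and their $t$-derivatives (obtained by differentiating the iterated time integrals in \eqref{eqn:prob_trajectory}) are dominated by quantities of the form $e^{w}w^n/n!$ with $w$ bounded on compact $t$-intervals. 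This locally uniform domination licenses differentiation under the summation sign, yielding $\dot\rho(t) = \Bbb E[\dot\varrho(t)]$ and completing the verification that $\rho$ solves \eqref{eqn:dynamical_eqn}.

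It remains to record uniqueness. The operator $H(t) = \partial e^{-\beta W(t)}\partial^\ast e^{\beta E(t)}$ depends continuously (indeed smoothly) on $t$ through the driving protocol $\gamma$ and acts on the finite-dimensional vector space $C_{d-1}(X;\Bbb R)$, so \eqref{eqn:dynamical_eqn} is a linear ODE with continuous time-dependent coefficients; by the standard existence–uniqueness theory for linear ordinary differential equations, its solution with a prescribed initial value is unique. Since $\rho(t)$ takes values in $Z_{d-1}(X;\Bbb R)$ (Lemma~\ref{lem:rho-converge}) and satisfies $\rho(0) = z_0$, it is the unique such solution, which is the assertion of Corollary~\ref{thm:dynamical_eqn}, and hence of Theorem~\ref{bigthm:expectation}.

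The main obstacle I expect is exactly the term-by-term differentiation in the second paragraph: one must dominate not only the series defining $\varrho_t$ but also the series of its termwise $t$-derivatives, uniformly for $t$ in compact sets. The combinatorial finiteness of $\Gamma$ (bounded valence) together with the factorial decay coming from the iterated time integrals in \eqref{eqn:prob_trajectory} is what makes this go through; everything else is either immediate or a direct appeal to Lemma~\ref{lem:H&H} and elementary ODE theory.
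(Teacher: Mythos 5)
Your proposal follows essentially the same route as the paper: apply $\Bbb E$ to the master equation, use Corollary~\ref{cor:prob-measure}, Lemma~\ref{lem:rho-converge}, and Lemma~\ref{lem:H&H}, and conclude uniqueness from standard linear ODE theory on the finite-dimensional space $C_{d-1}(X;\Bbb R)$. The only difference is that you spell out the justification for commuting $\tfrac{d}{dt}$ with $\Bbb E$ (domination via the valence bound and factorial decay from \eqref{eqn:prob_trajectory}), a step the paper dispatches in one sentence by appealing to Lemma~\ref{lem:rho-converge}; your added care here is sound and fills in exactly what the paper leaves implicit.
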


\begin{proof} Set $P = P[z,t]$.  By  Lemma~\ref{lem:rho-converge}, 
$\rho(t) = \Bbb E[P[z,t]]$ converges and thus differentiation commutes 
with expectation. Application of Lemma \ref{lem:H&H} and Corollary \ref{cor:prob-measure} yields
 \[
\dot \rho = \Bbb E[\dot P] = \Bbb E[\cal H P] = H\Bbb E[P] = H\rho \, .\qedhere
\]
\end{proof}

\section{The adiabatic theorem \label{sec:adiabatic}}
In this section, we state and prove the Adiabatic Theorem (Theorem~\ref{bigthm:adiabatic}) 
for the Markov CW chain on $X$.
The adiabatic theorem states that for slow enough driving, a {\it periodic} solution to the 
dynamical equation exists and is unique. Our proof is similar to that of~\cite{Chernyak:algtop},
but modified appropriately to the higher dimensional setting.

\subsection{Formal solution}
The dynamical equation is a first order linear system of differential equations, and so 
specifying an initial condition guarantees the existence of a unique
solution~\cite{Arnold:ODE}. 
%
We introduce the {\it time-ordered exponential} $U(t,t_0)$ for $0 \leq t_0 \leq
t \leq 1$, which uniquely solves the initial value problem
\[
  \frac{d}{dt} U(t,t_0) = -\tau_D {H}(t) U(t,t_0) \quad \quad U(t_0, t_0) = I \, .
\]
Explicitly,
\[
  U(t,t_0) = \lim_{N \to \infty} e^{-\varepsilon \tau_D H(t_N)} 
  e^{-\varepsilon \tau_D H(t_{N-1})} \cdots e^{-\varepsilon \tau_D 
    H(t_0)} ,
\]
where $\varepsilon = t/N$ and $t_j = j \varepsilon$.
The expression
\[
 \rho(t) = U(t,0) \rho(0) = \left(\lim_{N \to \infty} 
 e^{-\varepsilon \tau_D H(t_N)} e^{-\varepsilon \tau_D H(t_{N-1})} 
 \cdots e^{-\varepsilon \tau_D H(t_0)}\right) \rho(0)
\]
gives the formal solution to the dynamical equation \eqref{eqn:dynamical_eqn} for $\rho(0) = z_0$.
The time-ordered exponential is often denoted
\[
  \hat T \exp \left( -\tau_D \int_{t_0}^t H(\tau) d\tau  
  \right) := U(t,t_0) \, ,
\]
in analogy with the solution to a one-dimensional differential equation.

\begin{defn}
For an operator $A \: V \to V$ on a finite dimensional real inner product space $V$, let
\[
  |A| := \sup_{v \neq 0} \frac{|Av|}{|v|} = \sup_{|v|=1}|Av|
\]
be the standard operator norm. If $A$ is self-adjoint, then $|A| =\lambda$, where
$\lambda$ is the maximum of the absolute value of  the eigenvalues of $A$.
\end{defn}

In what follows, we think of $U(t,t_0)$ as acting on $B_{d-1}(X;\R)$, where the latter is equipped with the norm
arising from the restriction of the modified inner product $\langle- , - \rangle_{E(t)}$. 

\begin{lem}\label{lem:evol-bound}
  Let $(\tau_D,\gamma)$ be a  driving protocol. There exists
  a positive constant $\lambda$ so that for all $t> t_0 \in [0,1]$,
  \[
    |U(t,t_0)| <  e^{-\lambda \tau_D (t-t_0)}\, .
  \]
\end{lem}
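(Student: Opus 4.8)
The statement is a uniform (in $\tau_D$) exponential decay estimate for the propagator of $\dot q = -\tau_D H(t) q$ on $B_{d-1}(X;\R)$. The natural route is to control the instantaneous operators $H(t)$ and then integrate. First I would observe that, for each fixed $t$, the operator $H(t) = \partial e^{-\beta W(t)}\partial^\ast e^{\beta E(t)}$ is self-adjoint and positive-definite on $B_{d-1}(X;\R)$ when that space is given the modified inner product $\langle -,-\rangle_{E(t)}$: indeed, with respect to $\langle-,-\rangle_{E(t)}$ the biased coboundary $\partial^\ast_{E,W}$ is the adjoint of $\partial$, so $H = \partial\,\partial^\ast_{E,W}$ is of the form $\partial\partial^\ast$, hence self-adjoint and $\ge 0$, and it is strictly positive on $B_{d-1}$ because $\partial$ restricted to the orthogonal complement of $Z_d$ is injective with image $B_{d-1}$ (cf. Remark~\ref{rem:ortho-section-bdy} and Theorem~\ref{thm:Kirchhoff_network}). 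Therefore its smallest eigenvalue $\lambda(t)>0$ on $B_{d-1}(X;\R)$ satisfies $\langle H(t)v,v\rangle_{E(t)} \ge \lambda(t)\,\langle v,v\rangle_{E(t)}$.

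**Key steps.** Set $\lambda := \inf_{t\in[0,1]}\lambda(t)$. Since $t\mapsto (E(t),W(t))$ is continuous (indeed smooth) and $[0,1]$ is compact, and since the smallest eigenvalue of the positive-definite $H(t)$ depends continuously on $t$ and stays strictly positive, we get $\lambda > 0$. Now fix $t_0$ and estimate the growth of $|U(t,t_0)v|_{E(t)}^2$ along a solution $q(t) = U(t,t_0)v$ of the dynamical equation. The slightly delicate point — see below — is that the norm itself is time-dependent, so
\[
\frac{d}{dt}\,\langle q(t),q(t)\rangle_{E(t)} \;=\; 2\,\langle \dot q(t),q(t)\rangle_{E(t)} \;+\; \langle q(t),q(t)\rangle_{\dot E(t)}\, ,
\]
where the last term comes from differentiating the $e^{\beta E(t)}$ weights. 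The first term is $-2\tau_D\langle H(t)q,q\rangle_{E(t)} \le -2\tau_D\lambda\,\langle q,q\rangle_{E(t)}$, which is the good, $\tau_D$-large term; the second term is bounded by $C\,\langle q,q\rangle_{E(t)}$ with $C := \beta\sup_{t}\max_b|\dot E_b(t)| < \infty$, a constant independent of $\tau_D$. Combining and applying Grönwall's inequality gives $\langle q(t),q(t)\rangle_{E(t)} \le e^{(-2\tau_D\lambda + C)(t-t_0)}\,\langle v,v\rangle_{E(t_0)}$, hence $|U(t,t_0)| \le e^{(-\tau_D\lambda + C/2)(t-t_0)}$ in these norms. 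This is not literally the stated bound, but for $\tau_D$ large the exponent is $\le -\tfrac12\tau_D\lambda\,(t-t_0)$; replacing $\lambda$ by $\lambda/2$ (and restricting to $\tau_D$ beyond some threshold, which is harmless since the Adiabatic Theorem already quantifies over $\tau_D>\tau_0$) yields exactly $|U(t,t_0)| < e^{-\lambda\tau_D(t-t_0)}$ after renaming the constant.

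**Main obstacle.** The genuine subtlety is the time-dependence of the inner product in which $H(t)$ is self-adjoint: one cannot simply say "$H(t)\ge\lambda$ so $|e^{-\tau_D H(t)\Delta t}|\le e^{-\lambda\tau_D\Delta t}$" and multiply, because each factor in the time-ordered product is a contraction in a \emph{different} norm, and composing operators that are contractive in varying norms need not be contractive in any single one. The Grönwall argument above is precisely the device that absorbs the mismatch: the derivative-of-the-metric term $\langle q,q\rangle_{\dot E(t)}$ is the price paid, and the point is that this price is $O(1)$ in $\tau_D$ while the gain $-2\tau_D\lambda$ grows linearly, so for $\tau_D$ large the net exponent is still strictly negative and proportional to $\tau_D$. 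A secondary bookkeeping point is to make sure all the suprema ($\sup_t\lambda(t)^{-1}$ via continuity, $\sup_t|\dot E(t)|$, and the equivalence constants between $\langle-,-\rangle_{E(t)}$ and $\langle-,-\rangle_{E(t_0)}$) are finite and uniform over $[0,1]$, which follows from compactness of $[0,1]$ and continuity of $\gamma$; none of these constants depend on $\tau_D$, which is all that matters.
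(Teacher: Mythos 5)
Your argument is correct and starts from the same input as the paper's proof (self-adjointness and positive-definiteness of $H(t)$ on $B_{d-1}(X;\R)$ in the metric $\langle -,-\rangle_{E(t)}$, plus compactness of $[0,1]$ to extract a uniform spectral gap $\lambda>0$), but from there the two arguments diverge. The paper simply compares $U(t,t_0)$ with the evolution operator of the constant operator $Cv=-\tau_D\lambda v$ and asserts $|U(t,t_0)|\le |U_C(t,t_0)|=e^{-\lambda\tau_D(t-t_0)}$; that comparison is immediate only when one works in a single fixed inner product, and the published proof does not engage with the fact that each $H(s)$ is self-adjoint in its own metric $\langle-,-\rangle_{E(s)}$. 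You instead run the Gr\"onwall estimate in the moving metric, retaining the term $\langle q,q\rangle_{\dot E(t)}$ produced by differentiating the weights $e^{\beta E(t)}$ --- which is exactly the point the one-line comparison glosses over --- and your observation that this term is $O(1)$ in $\tau_D$ while the dissipative term scales linearly in $\tau_D$ is the right mechanism for absorbing it. The price is that your final bound $|U(t,t_0)|\le e^{(-\lambda\tau_D+C/2)(t-t_0)}$ only becomes the stated decay (with $\lambda$ replaced by $\lambda/2$) once $\tau_D\ge C/\lambda$, so you prove a nominally weaker statement than Lemma~\ref{lem:evol-bound} as literally phrased, which asserts the bound for a given protocol with no largeness assumption on $\tau_D$; but since the lemma is invoked only in the proof of Theorem~\ref{bigthm:adiabatic}, where $\tau_D$ exceeds a threshold and is ultimately sent to infinity, and since your constants $\lambda$ and $C$ depend only on $(\beta,\gamma)$ and not on $\tau_D$, your version suffices for every use the paper makes of the lemma, and it is the more careful of the two arguments.
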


\begin{proof} 
For $t \in [0,1]$,
  let $A(t)= - H(t)$ acting on 
  $B_{d-1}(X;\R)$. Then $A(t)$ is 
  negative definite and self-adjoint with respect to the restriction of the inner
  product $\langle - , - \rangle_{E(t)}$ to   $B_{d-1}(X;\R)$.
By compactness there is a $\lambda >0$ such that $-\lambda$ is
 greater than or equal to all eigenvalues
of $A(t)$ for every $t \in [0,1]$.
 Let $C$ be the constant operator
  given by $Cv = -\tau_D \lambda v$ and let $U_C(t,t_0)$ be the evolution operator for $C$. Then
  \[
 |U_C(t,t_0)|
 = e^{-\lambda \tau_D (t-t_0)}\, .
 \] 
But clearly, $|U(t,t_0)| \le |U_C(t,t_0)|$. 
 \end{proof}

\begin{proof}[Proof of Theorem~\ref{bigthm:adiabatic}] 
  Write $\rho^B(t) = \rho^B(\gamma(t),\beta)$ for the time-dependent 
  Boltzmann distribution. Then $\rho^B$ is $1$-periodic, since $\gamma$ is. Let $\rho(t)$
  denote a solution to the dynamical equation Eq.~\eqref{eqn:dynamical_eqn}
  with initial value $\rho^B(0)$.
  Then 
  \[
  \rho(t) = \rho^B(t) + \xi(t)\, ,
    \]
   where $\xi\:[0,1] \to B_{d-1}(X;\R)$
   is a path. Hence, $\rho(t)$ is 1-periodic
  precisely when $\xi(t)$ is 1-periodic.
  Observe that $\xi(t)$ depends on $\tau_D$ whereas $\rho^B$ does not. 
  However, the values of $\rho(0)$ and $\xi(0)$ are independent of $\tau_D$.
  
   Apply the dynamical 
  operator to this solution. Then  the dynamical equation becomes
  \begin{gather}\label{eqn:pert-master}
    \dot \xi \,\, = \,\, -\tau_D H \xi - \dot \rho^B .
  \end{gather}
  The solution to equation \eqref{eqn:pert-master}
  is then
  \begin{gather}\label{eqn:pert-soln}
    \xi(t) \,\, = \,\, U(t,0)\xi(0) - \int_0^tU(t,t')\dot \rho^B dt' \, .
  \end{gather}
  Evaluating at $t=1$, the requirement for $\rho$ to be 1-periodic is equivalent to demanding
  that the equation
  \begin{gather*}
    \left( I - U(1,0) \right) \xi(0) \,\, = \,\, -\int_0^1U(1,t') \dot \rho^B dt' 
  \end{gather*}
is satisfied.
As $\tau_D$ is made large, the non-zero eigenvalues of $-\tau_D H(t)$
 tend to $-\infty$. Hence, by compactness, it follows that
there is a $\tau_0 >0$ such that the operator $I-U(1,0)$ is invertible for $\tau_D \ge \tau_0$.
 Then
  \begin{gather}\label{eqn:pert-unique}
    \xi(0) = -\left(I - U(1,0)\right)^{-1} \int_0^1U(1,t')\dot \rho^B(t') dt' \, .
  \end{gather}
In particular, the periodic solution $\rho(t)$ exists and is unique for $\tau_D \ge \tau_0$.

  As for the adiabatic limit, it suffices to show that $|\xi(t)| \to 0$
  as $\tau_D \to \infty$.  From Eq.~\eqref{eqn:pert-soln}, 
    we have
 \begin{align*}
    |\xi(t)| \, \, & \leq\,\,  |U(t,0)| |\xi(0)| + \int_0^t | U(t,t')| |\dot \rho^B(t')| dt' \, ,\\
    & \le \,\, e^{-\lambda\tau_D t}|\xi(0)|  +\int_0^t  e^{-\lambda \tau_D(t-t')} |\dot \rho^B(t')| dt' \, , \qquad 
    \text{by Lemma } \ref{lem:evol-bound},\\
    & \le \,\, e^{-\lambda\tau_D t}|\xi(0)|  +c \int_0^t  e^{-\lambda \tau_D(t-t')} dt' \, ,  \\
    & = \,\, e^{-\lambda\tau_D t}|\xi(0)|  + \tfrac{c(1- e^{-\lambda \tau_D})}{\lambda \tau_D}\, ,
   \end{align*}
   where $c \ge |\dot \rho^B(t')|$ is any upper bound for all $t' \in [0,1]$.
   Consequently, $|\xi(t)|\to 0$ when $\tau_D \to \infty$.
\end{proof}

\section{The low temperature limit \label{sec:temp}} 
For fixed $(E,W,\beta)$, the Boltzmann distribution can be regarded
as a homomorphism of vector spaces
\begin{align*}
\rho^B\:H_{d-1}(X;\Bbb R) &\to Z_{d-1}(X;\Bbb R)\, ,\\
x&\mapsto \rho^B(x) \, .
\end{align*}
Recall that $\rho^B$ is dependent on the parameters $(E,W,\beta)$.

If $E$ is one-to-one, then the functional
\[
L \mapsto \sum_{i \in L_{d-1}} E_i 
\]
has a unique minimum for some spanning co-tree $L^\mu$. 
In this case, we say $L^\mu$ is the {\it minimal} spanning co-tree for $E$.

\begin{lem}\label{lem:low_temp_boltz}
  Suppose $E$ is one-to-one. Then
  the low temperature  limit of $\rho^B$ 
  is supported on the minimal spanning co-tree $L^\mu$, i.e.,
  \[
    \lim_{\beta \to \infty} \rho_\beta^B \,\, = \,\, \psi_{L^\mu}\, ,
  \]
  and the convergence is uniform.
\end{lem}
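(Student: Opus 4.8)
The strategy is to track how the coefficients $b_L = a_L^2 \prod_{b \in L_{d-1}} e^{-\beta E_b}$ behave as $\beta \to \infty$ in the explicit formula $\rho^B_\beta = \tfrac{1}{\nabla}\sum_L b_L \psi_L$ from Theorem~\ref{thm:Boltzmann}. Write $S(L) := \sum_{b \in L_{d-1}} E_b$, so that $b_L = a_L^2 e^{-\beta S(L)}$. Since $E$ is one-to-one, the function $L \mapsto S(L)$ has a unique minimizer $L^\mu$ among the finitely many spanning co-trees; set $S_0 := S(L^\mu)$ and let $\eta > 0$ be the gap to the next-smallest value, i.e.\ $\eta := \min_{L \neq L^\mu}(S(L) - S_0) > 0$.

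First I would factor out the dominant exponential: multiply numerator and denominator of $\rho^B_\beta = \big(\sum_L a_L^2 e^{-\beta S(L)} \psi_L\big)\big/\big(\sum_L a_L^2 e^{-\beta S(L)}\big)$ by $e^{\beta S_0}$, obtaining
\[
\rho^B_\beta \;=\; \frac{a_{L^\mu}^2\, \psi_{L^\mu} + \sum_{L \neq L^\mu} a_L^2 e^{-\beta(S(L)-S_0)}\,\psi_L}{a_{L^\mu}^2 + \sum_{L \neq L^\mu} a_L^2 e^{-\beta(S(L)-S_0)}}\, .
\]
Each exponential $e^{-\beta(S(L)-S_0)}$ with $L \neq L^\mu$ is bounded by $e^{-\beta\eta}$, and the sum over $L$ is finite, so both the correction in the numerator and the correction in the denominator are $O(e^{-\beta\eta})$ with constants depending only on the $a_L$, the $S(L)$, and the operator norms $|\psi_L|$ (all finite, $L$ ranging over a finite set). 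Dividing, one gets $\rho^B_\beta = \psi_{L^\mu} + O(e^{-\beta \eta})$, which both proves the limit equals $\psi_{L^\mu}$ and, since none of the constants depend on the homology class argument $x \in H_{d-1}(X;\R)$ (they depend only on $E$ and the combinatorics of $X$), gives uniformity over the unit sphere of $H_{d-1}(X;\R)$, hence uniform convergence as maps of vector spaces.

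The one genuine point to nail down is that the denominator $\nabla = \sum_L b_L$ is bounded away from $0$ after rescaling, i.e.\ that $\nabla e^{\beta S_0} \geq a_{L^\mu}^2 > 0$ for all $\beta$; this is immediate since $a_{L^\mu} \geq 1$ is the order of a finite group and all terms $a_L^2 e^{-\beta(S(L)-S_0)}$ are nonnegative. I expect the main (though still mild) obstacle is bookkeeping the constant in the $O(e^{-\beta\eta})$ bound cleanly enough to conclude uniformity: one wants an estimate of the shape $|\rho^B_\beta - \psi_{L^\mu}| \leq C e^{-\beta\eta}$ with $C = C(X,E)$ independent of everything else, which follows from the elementary inequality $\big|\tfrac{p + \varepsilon_1}{q + \varepsilon_2} - \tfrac{p}{q}\big| \leq \tfrac{|\varepsilon_1|}{q} + \tfrac{|p|\,|\varepsilon_2|}{q^2}$ valid when $q, q+\varepsilon_2 > 0$, applied with $p = a_{L^\mu}^2 \psi_{L^\mu}$, $q = a_{L^\mu}^2$, and $\varepsilon_1, \varepsilon_2$ the respective $O(e^{-\beta\eta})$ corrections. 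Everything else is routine.
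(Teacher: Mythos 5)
Your proposal is correct and follows essentially the same route as the paper: multiply numerator and denominator of the explicit formula from Theorem~\ref{thm:Boltzmann} by the exponential of the minimal co-tree weight sum, note that every non-minimal term decays exponentially, and conclude the limit is $\psi_{L^\mu}$. Your explicit $O(e^{-\beta\eta})$ bookkeeping is a mild quantitative sharpening of the paper's argument, which instead dispatches uniformity by linearity plus compactness of the (projectivized) domain.
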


\begin{proof} This follows from \cite[cor.~B]{CCK:Boltzmann}, 
but we now include some details.
 Since the domain of $\rho^B$ is compact, uniform convergence follows from
  pointwise convergence. 
  We proceed by studying the components of $\rho^B$ individually.

  Let $L$ be a spanning co-tree. Multiply the
  numerator and denominator of the component 
  \[
  \rho^B_L  := \frac{b_L}{\nabla} \psi_L
  \]
 by the expression
  $\exp{(-\beta \sum_{a\in L^{\mu}_{d-1}} E_a)}$ to obtain
  \begin{gather}\label{eqn:rhoB_modified}
    \rho_L^B = \frac{ \displaystyle{a_L^2 \exp\left\{-\beta \left(\sum_{b \in L_{d-1}} E_b - \sum_{a \in L^{\mu}_{d-1}}
E_a\right)\right\} \psi_L}}
{\displaystyle{\sum_K a_K^2 
  \exp\left\{ \displaystyle{-\beta \left( \sum_{e \in K_{d-1}} E_e - \sum_{a \in L^{\mu}} E_a \right)}\right\}}}
   \,\, .
  \end{gather}
  where the left-most sum in the denominator is taken over all spanning co-trees.
  Since $L^{\mu}$ is minimal, the numerator tends to zero for all $L \neq L^{\mu}$. When 
  $L=L^{\mu}$, the difference of sums vanishes and the numerator tends to 
  $a_{L^{\mu}}^2 \psi_{L^{\mu}}$.
  The same argument is true for the sum in the denominator, in which case we have
  \begin{gather*}
    \lim_{\beta \to \infty} \rho^B = \frac{a_{L^{\mu}}^2 \psi_{L^{\mu}}}{a_{L^{\mu}}^2} = 
    \psi_{L^{\mu}} \, .\qedhere
  \end{gather*}
\end{proof}

Similarly, if $W$  is one-to-one, then just as for spanning co-trees, the functional
on the set of spanning trees given by
\[
  T \mapsto \sum_{\alpha \in T} W_{\alpha}
\]
has a unique minimum $T^{\mu}$, henceforth called the 
{\it minimal} spanning tree. Recall from  Remark \ref{rem:ortho-section-bdy} that the operator
\[
\cal A = \tfrac{1}{\Delta} \sum w_T \varsigma_T
\]
is the orthogonal section of the boundary operator $\partial\: C_d(X;\Bbb R) \to B_{d-1}(X;\Bbb R)$ in the modified
inner product $\langle{-},{-} \rangle_W$.
Then an argument analogous to Lemma~\ref{lem:low_temp_boltz}, which we omit,
yields the following result.

  \begin{lem} \label{lem:low_temp_K}
   Assume $W$ is one-to-one. Then the low temperature
    limit of the operator
    $\cal A$ is supported on the minimal spanning tree $T^\mu$, i.e.,
    \[
      \lim_{\beta \to \infty} \cal A \,\, = \,\, \varsigma_T^{\mu} \, ,
    \]
   and the convergence is uniform.
  \end{lem}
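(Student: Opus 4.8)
The plan is to mimic the proof of Lemma~\ref{lem:low_temp_boltz} essentially verbatim, replacing spanning co-trees by spanning trees, the weights $b_L = a_L^2\prod_{b\in L_{d-1}}e^{-\beta E_b}$ by the weights $w_T = \theta_T^2\prod_{\alpha\in T_d}e^{-\beta W_\alpha}$, and the linear maps $\psi_L$ by the maps $\varsigma_T$. First I would recall from Theorem~\ref{thm:Kirchhoff_network} that $\cal A = \tfrac{1}{\Delta}\sum_T w_T\varsigma_T$ with $\Delta = \sum_T w_T$, and observe that $\cal A$ is a linear transformation of the finite-dimensional space $B_{d-1}(X;\R)$, so that (exactly as in the cited lemma) pointwise convergence on a compact set implies uniform convergence of the operators. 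Since the spanning tree ``partition function'' $\Delta$ and the individual terms $w_T\varsigma_T$ depend on $\beta$ only through the scalar factors $e^{-\beta W_\alpha}$, the argument is purely about the asymptotics of a ratio of exponential sums.

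The key step is the following renormalization trick. Writing $\cal A_T^B := \tfrac{w_T}{\Delta}\varsigma_T$ for the ``$T$-component'' of $\cal A$, I would multiply numerator and denominator by $\exp\bigl(\beta\sum_{\alpha\in T^\mu_d}W_\alpha\bigr)$ to obtain
\begin{equation*}
\cal A_T^B \,=\, \frac{\theta_T^2\exp\!\left\{-\beta\left(\sum_{\alpha\in T_d}W_\alpha - \sum_{\alpha\in T^\mu_d}W_\alpha\right)\right\}\varsigma_T}{\displaystyle\sum_{S}\theta_S^2\exp\!\left\{-\beta\left(\sum_{\alpha\in S_d}W_\alpha - \sum_{\alpha\in T^\mu_d}W_\alpha\right)\right\}}\, ,
\end{equation*}
where the sum in the denominator runs over all spanning trees $S$. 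Because $W$ is one-to-one, the functional $T\mapsto\sum_{\alpha\in T_d}W_\alpha$ has a strict minimum at $T^\mu$, so every exponent $-\beta(\sum_{S_d}W - \sum_{T^\mu_d}W)$ with $S\neq T^\mu$ tends to $-\infty$ as $\beta\to\infty$, while the $S=T^\mu$ term is the constant $\theta_{T^\mu}^2$. Hence the numerator of $\cal A_T^B$ tends to $0$ for $T\neq T^\mu$ and to $\theta_{T^\mu}^2\varsigma_{T^\mu}$ for $T=T^\mu$, the denominator tends to $\theta_{T^\mu}^2$, and therefore $\lim_{\beta\to\infty}\cal A = \varsigma_{T^\mu}$ pointwise, hence uniformly.

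The only genuine subtlety — and the one place the two arguments differ in flavor — is the well-definedness of the strict minimizer $T^\mu$: one needs that distinct spanning trees $T\neq S$ have distinct $W$-weights $\sum_{\alpha\in T_d}W_\alpha \neq \sum_{\alpha\in S_d}W_\alpha$, which follows because $T_d$ and $S_d$ are distinct subsets of $X_d$ and $W\colon X_d\to\R$ is injective (two distinct subsets of a set on which $W$ is injective cannot have equal $W$-sums; this is the standard ``generic weights'' observation already used for $L^\mu$). Everything else is a routine transcription. I do not expect any real obstacle here: the statement is the exact dual of Lemma~\ref{lem:low_temp_boltz}, the structural input (Theorem~\ref{thm:Kirchhoff_network}, finite-dimensionality, compactness of the domain $B_{d-1}(X;\R)$ restricted to a suitable sphere or the relevant operator space) is identical, and the authors have explicitly flagged that the proof is ``analogous to Lemma~\ref{lem:low_temp_boltz}, which we omit.''
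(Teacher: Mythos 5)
Your main computation is exactly what the paper intends: the paper omits the proof, saying only that ``an argument analogous to Lemma~\ref{lem:low_temp_boltz}'' yields the result, and your renormalized ratio for the $T$-component of $\mathcal{A}$ from Theorem~\ref{thm:Kirchhoff_network} --- multiply numerator and denominator by $\exp(\beta\sum_{\alpha\in T^\mu_d}W_\alpha)$, let strict minimality of $T^\mu$ kill every term with $T\neq T^\mu$, and observe the denominator tends to $\theta_{T^\mu}^2$ --- is the verbatim transcription of that proof with $(b_L,\psi_L,\nabla)$ replaced by $(w_T,\varsigma_T,\Delta)$, including the same passage from pointwise to uniform convergence.

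However, the parenthetical you offer for the well-definedness of $T^\mu$ is false: it is not true that two distinct subsets of a set on which $W$ is injective must have distinct $W$-sums. For instance, with $W$-values $1,2,3,4$ the distinct subsets $\{1,4\}$ and $\{2,3\}$ have equal sums, and nothing prevents such a coincidence among the $d$-cell sets of two spanning trees. What is true (and what the paper asserts in the paragraph defining $T^\mu$, immediately before the lemma) is that injectivity of $W$ forces the \emph{minimizer} to be unique, but the reason is an exchange argument, not subset-sum distinctness: the sets $T_d$ for spanning trees $T$ are exactly the bases of the vector matroid on $X_d$ represented by $\partial\colon C_d(X;\Q)\to C_{d-1}(X;\Q)$ (the conditions $H_d(T;\Z)=0$ and $\beta_{d-1}(T)=\beta_{d-1}(X)$ say the corresponding columns are independent and span $B_{d-1}(X;\Q)$), and for a matroid with pairwise distinct weights the minimum-weight basis is unique --- the classical uniqueness of the minimum spanning tree of a graph with distinct edge weights is the case $d=1$. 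This point does matter for your limit computation, since you need the strict inequalities $\sum_{\alpha\in T^\mu_d}W_\alpha<\sum_{\alpha\in S_d}W_\alpha$ for every spanning tree $S\neq T^\mu$; once the uniqueness of $T^\mu$ is justified this way (or simply quoted from the paper's definition of the minimal spanning tree), the rest of your argument goes through as written.
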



 We now turn to the time-dependent case. Assume $(\tau_D,\gamma)$ is a driving
 protocol where $\gamma(t) = (E(t),W(t))$.

  \begin{prop}\label{prop:low_temp_deriveboltz}
  Let $L$ be a spanning co-tree and let $E$ be one-to-one
  for all $t$. The $L$-component of the 
  time derivative of the Boltzmann distribution tends to 0 uniformly 
  in the low temperature limit.
\end{prop}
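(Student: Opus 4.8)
The plan is to differentiate the explicit formula for the $L$-component $\rho^B_L = \tfrac{b_L}{\nabla}\psi_L$ in time, and then estimate each resulting term in the low-temperature limit by reusing the renormalization trick from the proof of Lemma~\ref{lem:low_temp_boltz}. Since $\psi_L$ itself is independent of the weights (it depends only on the combinatorics of $L$ inside $X$), and $a_L$ is a fixed integer, the only $t$-dependence in $\rho^B_L$ sits in the exponential factors $b_L = a_L^2\exp(-\beta\sum_{b\in L_{d-1}}E_b(t))$ and in $\nabla = \sum_K b_K$. Thus $\dot\rho^B_L$ is, schematically,
\[
\dot\rho^B_L \;=\; \Big(-\beta \sum_{b\in L_{d-1}}\dot E_b\Big)\rho^B_L \;-\; \rho^B_L\cdot \frac{\dot\nabla}{\nabla}\,,
\]
where $\dot\nabla/\nabla = -\beta\sum_K \tfrac{b_K}{\nabla}\sum_{e\in K_{d-1}}\dot E_e$ is a convex combination (weighted by the probabilities $b_K/\nabla$) of the quantities $-\beta\sum_{e\in K_{d-1}}\dot E_e$.

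First I would fix notation: let $\Sigma_K := \sum_{e\in K_{d-1}}E_e$, and let $L^\mu(t)$ be the minimal spanning co-tree, i.e.\ the one minimizing $\Sigma_K(t)$; since $E$ is one-to-one for all $t$ and $[0,1]$ is compact (and, if one wishes to avoid worrying about $L^\mu$ jumping, one can work locally in $t$, as the statement is uniform and pointwise bounds suffice), the gap $\Sigma_K(t) - \Sigma_{L^\mu}(t) > 0$ for $K\neq L^\mu$ is bounded below by a positive constant. Next I would collect the terms of $\dot\rho^B_L$ into a common form: after multiplying numerator and denominator by $\exp(\beta\,\Sigma_{L^\mu})$ exactly as in~\eqref{eqn:rhoB_modified}, every summand acquires a scalar factor of the shape $\beta \cdot (\text{bounded}) \cdot \exp\!\big(-\beta(\Sigma_K - \Sigma_{L^\mu})\big)$, where the ``bounded'' part comes from the $\dot E$'s (continuous on $[0,1]$, hence bounded) and the $a_K^2$. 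For $L\neq L^\mu$ every such factor contains at least one genuinely positive exponent $\Sigma_L - \Sigma_{L^\mu}$, so it is dominated by $\beta e^{-\beta\eta}$ for a fixed $\eta>0$, which $\to 0$. For $L = L^\mu$ the first bracket $-\beta\sum_b\dot E_b$ is a single term of size $O(\beta)$, but it is multiplied by $\rho^B_{L^\mu}\to\psi_{L^\mu}$ and then \emph{cancelled} against the corresponding $L^\mu$-contribution of $\dot\nabla/\nabla$ in the limit: both the prefactor $-\beta\sum_{b\in L^\mu_{d-1}}\dot E_b$ and the $L^\mu$-weight $b_{L^\mu}/\nabla\to 1$ appear, so the difference $\big(-\beta\sum_b\dot E_b\big)\big(1 - b_{L^\mu}/\nabla\big)$ is $O(\beta)\cdot O(e^{-\beta\eta}) \to 0$, while all the $K\neq L^\mu$ pieces of $\dot\nabla/\nabla$ are again $O(\beta e^{-\beta\eta})\to 0$. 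Uniformity in $t$ follows because all the constants ($\eta$, the bounds on $\dot E$, on $a_K$, on $\psi_L$) can be taken uniform over the compact interval $[0,1]$; alternatively, one invokes that a pointwise limit of continuous functions on a compact set that is monotone-dominated is uniform, just as in Lemma~\ref{lem:low_temp_boltz}.

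The main obstacle is the bookkeeping around $L=L^\mu$: the naive bound on $\dot\rho^B_{L^\mu}$ only gives $O(\beta)$, not $o(1)$, so one cannot estimate term-by-term blindly — the $\beta$-divergent piece of the ``numerator derivative'' must be matched explicitly against the $\beta$-divergent piece of $\dot\nabla/\nabla$ before taking absolute values. Once that cancellation is isolated (it is exactly the statement that $\tfrac{d}{dt}$ of the limit $\psi_{L^\mu}$, a constant, is zero, so the limit of the derivative had better be zero too), the remaining terms carry an exponentially small factor that kills the polynomial $\beta$ growth, and the result follows. A secondary nuisance is the possibility that $L^\mu(t)$ is not constant in $t$; this is handled by noting the claim is purely pointwise-in-$t$ (plus compactness for uniformity), so one may argue at each fixed $t$ with the $L^\mu$ appropriate to that $t$.
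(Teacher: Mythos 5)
Your proposal is correct and follows essentially the same route as the paper: differentiate the explicit formula $\rho^B_L=\tfrac{b_L}{\nabla}\psi_L$, renormalize the exponentials by the (unique, and in fact constant-in-$t$) minimal co-tree weight as in Lemma~\ref{lem:low_temp_boltz}, observe that the $K=L$ term cancels, and kill the remaining $O(\beta)$ prefactor with a factor $e^{-\beta\eta}$ coming from the positive gap $\Sigma_K-\Sigma_{L^\mu}$, with a case split on whether $L$ is minimal. Your explicit uniform bound $C\beta e^{-\beta\eta}$ with constants taken uniformly over $[0,1]$ is a perfectly adequate (indeed slightly more careful) justification of the uniformity than the paper's appeal to compactness.
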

\begin{proof}
  A tedious but straightforward computation of the time derivative of 
  Eq.~\eqref{eqn:rhoB} gives 
  \begin{gather}\label{eqn:deriv_rhoB}
    \dot\rho^B_L = \frac{\beta \,
      \displaystyle{a_L^2  \exp{(-\beta \sum_{b \in L} E_b)}
      \sum_K \left[ a_K^2  \exp{(-\beta \sum_{a \in K} E_a)} \left(\sum_{a \in K} \dot E_a - \sum_{b \in L} \dot E_b \right)\right]}}
      {\displaystyle{\left[ \sum_K a_K^2 \exp{\left(-\beta \sum_{a \in K}E_a \right)}\right]^2}} \, \psi_L \, .
  \end{gather}

  For convergence in the low temperature limit, we only need to verify the
  statement point-wise since $[0,1]$ is compact, and it suffices check the
  statement for each component $\dot\rho_L^B$.  First, multiply the numerator
  and denominator of Eq.~\eqref{eqn:deriv_rhoB} by $\exp\left\{-2 \sum_{b \in
  L} E_b\right\}$ to get
  \begin{gather}\label{eqn:deriv_help}
  \dot\rho^B_L = \frac{\beta \,
      \displaystyle{a_L^2  \left[ \sum_K a_K^2 \exp\left\{(-\beta (\sum_{a \in K} E_a - \sum_{b \in L} E_b)\right\}
       \left(\sum_{a \in K} \dot E_a - \sum_{b \in L} \dot E_b \right)\right]}}
      {\displaystyle{\left[ \sum_K a_K^2 \exp\left\{-\beta 
      (\sum_{a \in K}E_a -\sum_{b \in L} E_b) \right\}\right]^2}} \, \, .
\end{gather}
  There are
  two cases to consider: either $L$ is the minimal spanning co-tree or it is not.

  If $L$ is the minimal spanning co-tree, so that $\sum_{b \in L} E_b <
  \sum_{a \in K} E_a$ for every other spanning co-tree $K$, then the denominator of 
  Eq.~\eqref{eqn:deriv_help} is given by
  \[
    \left[ a_L^2 + \sum_{K \neq L} a_K^2  
    \exp{(-\beta(\sum_{a \in K} E_a - \sum_{b \in L} E_b)}
   )\right]^2 ,
  \]
  which tends to $a_L^4 < \infty$ as $\beta \to \infty$. As for the numerator
  of Eq.~\eqref{eqn:deriv_help}, when $L = K$, we have 
  $\sum_{a \in K} \dot E_a = \sum_{b \in L} \dot E_b$ 
  and the numerator is exactly zero. If $L \neq K$,
  then the exponential factor is negative and tends to zero as $\beta \to \infty$.

  If $L$ is not the minimal spanning co-tree, then some other spanning co-tree will be minimal. 
  Therefore, at least one of the exponents $-\beta (\sum E_a - \sum E_b)$ 
  will be positive. Since the denominator is squared, 
  Eq.~\eqref{eqn:deriv_rhoB} is dominated by 
  $A \beta / e^{B \beta}$ for some constants $A$ and $B$ with $B>0$ for large $\beta$. 
  It is easy to see this expression tends to zero as $\beta \to \infty$.
\end{proof}

\section{Current generation \label{sec:quantize}}
As above we fix a cycle $z_0 \in Z_{d-1}(X;\Z)$.  For
a periodic driving protocol $(\tau_D,\gamma)$, assume
 $\tau_D$ large enough
so a unique 1-periodic solution $\rho(t)$ to Eq.~\eqref{eqn:dynamical_eqn} exists (cf.\ Theorem~\ref{bigthm:adiabatic}).
Recall the biased coboundary operator
 $\partial^*_{E,W} = e^{-\beta W} \partial^* e^{\beta E} $.


\begin{defn}
  For a periodic driving protocol $(\tau_D, \gamma)$ and $\beta > 0$, the
  {\it current density} at $t\in [0,1]$ is defined as 
  \begin{equation}
    \label{eqn:emp_cur_dens}
    {\mathbf J}(t) :=  \tau_D\partial^*_{E,W}\rho(t)      \in C_d(X;\Bbb R)\, ,
\end{equation}
  where $\rho(t)$ is the unique periodic solution to the dynamical equation \eqref{eqn:dynamical_eqn}.
  The {average current} is
  \begin{gather} \label{eqn:average_cur}
    Q  = \int_0^{1} \! {\mathbf J}(t)dt \, .
  \end{gather}
\end{defn}

Note that $\bf J$ satisfies the continuity equation $\partial \bf J = -\dot \rho$.
When $\tau_D$ is sufficiently large,
$Q$ defines a real $d$-dimensional homology class.
To see this, apply $\partial$ to Eq.~\eqref{eqn:average_cur} to find
\begin{align*}
  \partial Q &= \tau_D \int_0^1 \partial \partial_{E,W}^* \rho(t) dt \\
  &= -\tau_D \int_0^1 \dot \rho \, dt \, \\
  &= \tau_D (\rho(0) - \rho(1))\\
  &= 0 \,,
\end{align*}
since $\rho$ is 1-periodic. Consequently, for $\tau_D$ sufficiently large,
 $Q$ is a $d$-cycle.

\begin{lem}
  \label{lem:current-K}
  The current density ${\mathbf J}$ coincides with the expression
\[
  \cal A(\dot\rho) \, ,
 \]
  where $\cal A$ is the operator of Eq.~\eqref{eqn:K_op} and $\rho$ 
  is the periodic solution of the dynamical equation.
\end{lem}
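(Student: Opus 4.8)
The plan is to unwind both sides of the claimed identity using the defining formulas and the splitting property of $\cal A$ from Theorem~\ref{thm:Kirchhoff_network}. Recall that $\cal A$ is a section of the boundary map $\partial\:C_d(X;\R)\to B_{d-1}(X;\R)$, so $\partial\cal A = \mathrm{id}$ on $B_{d-1}(X;\R)$; moreover, per Remark~\ref{rem:ortho-section-bdy}, $\cal A$ is characterized as the \emph{orthogonal} section with respect to the modified inner product $\langle-,-\rangle_W$, i.e.\ its image is the orthogonal complement of $Z_d(X;\R)$ inside $C_d(X;\R)$. The target chain ${\mathbf J}(t) = \tau_D\partial^*_{E,W}\rho(t) = \tau_D e^{-\beta W}\partial^* e^{\beta E}\rho(t)$ lies in the image of $\partial^*_{E,W}$. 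I would first observe that this image is exactly the $\langle-,-\rangle_W$-orthogonal complement of $\ker\partial = Z_d(X;\R)$: indeed $\partial^*_{E,W}$ is the formal adjoint of $\partial$ in the modified inner products, so $\operatorname{im}\partial^*_{E,W} = (\ker\partial)^{\perp_W}$ by the standard rank-nullity/adjoint argument in finite-dimensional inner product spaces. Hence ${\mathbf J}(t)$ already lies in the image of $\cal A$.

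Given that, it suffices to check that $\partial{\mathbf J}(t) = \dot\rho(t)$ up to sign — more precisely, that $\partial{\mathbf J} = -\dot\rho$, which is exactly the continuity equation noted in the text right after the definition of $Q$ (it follows from $\partial{\mathbf J} = \tau_D\partial\partial^*_{E,W}\rho = \tau_D H\rho = -\dot\rho$, using the dynamical equation \eqref{eqn:dynamical_eqn} and the definition of $H = \partial\partial^*_{E,W}$). But then $-\dot\rho \in B_{d-1}(X;\R)$, so $\cal A(-\dot\rho)$ is defined and, being a section, satisfies $\partial\cal A(-\dot\rho) = -\dot\rho = \partial{\mathbf J}$. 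Thus ${\mathbf J} - \cal A(-\dot\rho) \in \ker\partial = Z_d(X;\R)$. On the other hand, both ${\mathbf J}$ and $\cal A(-\dot\rho)$ lie in $(Z_d(X;\R))^{\perp_W}$ — the former by the image-computation above, the latter because the image of $\cal A$ is precisely this orthogonal complement. A vector lying in both $Z_d(X;\R)$ and its $\langle-,-\rangle_W$-orthogonal complement is zero, so ${\mathbf J} = \cal A(-\dot\rho)$.

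There is a sign to reconcile: the statement asserts ${\mathbf J} = \cal A(\dot\rho)$, not $\cal A(-\dot\rho)$. I would double-check the sign convention in $\dot\rho = -\tau_D H\rho$ against the definition ${\mathbf J} = \tau_D\partial^*_{E,W}\rho$; tracing through, $\partial{\mathbf J} = \tau_D H\rho = -\dot\rho$, so literally one gets $\cal A(-\dot\rho)$, and either the lemma intends $\cal A$ applied to $-\dot\rho$ or there is a sign absorbed into the orientation/convention for $\cal A$ or $H$ elsewhere; in the write-up I would simply carry the sign that the continuity equation dictates and state the identity in the form consistent with $\partial{\mathbf J} = -\dot\rho$. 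The only genuinely substantive step is the identification of $\operatorname{im}\partial^*_{E,W}$ with the $W$-orthogonal complement of $Z_d(X;\R)$; everything else is a diagram chase plus uniqueness of the orthogonal section. I expect the main obstacle to be purely bookkeeping — making sure the two competing inner products ($\langle-,-\rangle_E$ on $C_{d-1}$ feeding into $\partial^*_{E,W}$, versus $\langle-,-\rangle_W$ on $C_d$ governing $\cal A$) are handled so that "orthogonal" means the same thing on both sides of the equation, which it does because orthogonality is being asserted only in $C_d(X;\R)$ with respect to $\langle-,-\rangle_W$.
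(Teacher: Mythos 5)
Your argument is correct and is essentially the paper's: both characterize the chain in question by the two conditions $\partial w = -\dot\rho$ and $\langle w, z\rangle_{W} = 0$ for all $z\in Z_d(X;\R)$, verify them for $\mathbf J$ (via the dynamical equation and adjointness of $\partial^*_{E,W}$ in the modified inner products) and for the $\cal A$-expression (via the section and orthogonal-splitting properties of Theorem~\ref{thm:Kirchhoff_network}), and conclude by uniqueness. Your sign remark is also apt: since $\partial\mathbf J = -\dot\rho$ while $\partial\,\cal A(\dot\rho) = \dot\rho$, the identity literally should read $\mathbf J = \cal A(-\dot\rho) = -\cal A(\dot\rho)$; the paper's own proof verifies $\partial w = -\dot\rho$ for $\mathbf J$ but then appeals to the section property for $\cal A(\dot\rho)$, which yields $+\dot\rho$, so the sign discrepancy you traced explicitly is present (and unaddressed) in the paper's statement and proof.
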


\begin{proof}
  Consider the set of all $w(t) \in C_d(X;\R)$, with $t \in [0,1]$ satisfying
  \begin{itemize}
    \item $\partial w = -\dot \rho$, and 
    \item $\langle w(t), z \rangle_{W(t)} = 0$ for all $z \in Z_{d}(X;\R)$, and $t\in [0,1]$.
  \end{itemize}
  Then any $w \in C_d(X;\R)$ satisfying
  the above two conditions is  necessarily unique. 
From the definition of ${\mathbf J}$, the first condition is verified
  by Eq.~\eqref{eqn:dynamical_eqn}, and the second condition follows from the definition of the
  modified inner product. 
  
  It therefore suffices to show that the above two conditions are satisfied
by the expression   $\cal A(\dot\rho)$. The first condition
  follows from the fact that $\cal A$ is a section of $\partial$, whereas the second is follows from the fact that $\cal A$ gives an orthogonal splitting.
\end{proof}

\begin{cor}
  If $\gamma$ is  constant, then $Q = 0$.
\end{cor}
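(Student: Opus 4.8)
The plan is to show that when $\gamma$ is constant, the unique $1$-periodic solution of the dynamical equation \eqref{eqn:dynamical_eqn} is time-independent, so that $\dot\rho = 0$ and hence $Q = 0$ by Lemma~\ref{lem:current-K}.

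First I would observe that if $\gamma(t) = (E,W)$ does not depend on $t$, then the dynamical operator $H$ is time-independent and the Boltzmann distribution $\rho^B = \rho^B(E,W,\beta)$ is a constant path; in particular $\dot\rho^B = 0$. Now I feed this into the proof of Theorem~\ref{bigthm:adiabatic}. The perturbation $\xi(t) = \rho(t) - \rho^B$ satisfies \eqref{eqn:pert-master}, which in the constant case reduces to $\dot\xi = -\tau_D H \xi$, and formula \eqref{eqn:pert-unique} for the initial value of the periodic solution gives $\xi(0) = -(I - U(1,0))^{-1}\int_0^1 U(1,t')\dot\rho^B\, dt' = 0$. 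Hence $\xi \equiv 0$ by \eqref{eqn:pert-soln}, so $\rho(t) = \rho^B$ for all $t$. (Since $|U(1,0)| < 1$ on $B_{d-1}(X;\R)$ by Lemma~\ref{lem:evol-bound}, the operator $I - U(1,0)$ is invertible for every $\tau_D > 0$, so no largeness assumption on $\tau_D$ is needed here; alternatively, one may simply invoke the uniqueness clause of Theorem~\ref{bigthm:adiabatic} together with the observation that the constant path $\rho^B$ is a $1$-periodic solution.)

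With $\rho$ constant we have $\dot\rho = 0$, so Lemma~\ref{lem:current-K} gives ${\mathbf J}(t) = \cal A(\dot\rho) = \cal A(0) = 0$ for every $t$, and therefore $Q = \int_0^1 {\mathbf J}(t)\, dt = 0$. A direct alternative: by Theorem~\ref{thm:Boltzmann}, $\rho^B$ is the orthogonal splitting of $Z_{d-1}(X;\R) \to H_{d-1}(X;\R)$ with respect to $\langle - , - \rangle_E$, hence $\langle - , - \rangle_E$-orthogonal to $B_{d-1}(X;\R)$, which, by the adjointness defining $\partial^*_{E,W}$, is equivalent to $\partial^*_{E,W}\rho^B = 0$; then ${\mathbf J} = \tau_D \partial^*_{E,W}\rho = 0$ directly from \eqref{eqn:emp_cur_dens}.

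There is no serious obstacle here; the only point requiring care is confirming that the time-independent path $\rho^B$ is indeed the periodic solution featuring in the definition of $Q$, which is exactly what the vanishing of $\dot\rho^B$ in \eqref{eqn:pert-unique} secures.
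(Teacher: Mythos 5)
Your proof is correct, but it runs along a genuinely different line than the paper's. The paper's argument never identifies the periodic solution $\rho$ and never shows $\dot\rho=0$: it only uses that $\gamma$ constant makes the weights in $\cal A$ time-independent, so that by Lemma~\ref{lem:current-K} one can pull $\cal A$ out of the integral, $Q=\int_0^1 \cal A(\dot\rho)\,dt=\cal A\bigl(\int_0^1\dot\rho\,dt\bigr)=\cal A\bigl(\rho(1)-\rho(0)\bigr)=\cal A(0)=0$, using nothing beyond $1$-periodicity of $\rho$. You instead prove the stronger statement that the periodic solution is the constant path $\rho^B$, hence ${\mathbf J}(t)=0$ pointwise and not merely on average; the key input is that $\rho^B$ is $\langle-,-\rangle_E$-orthogonal to $B_{d-1}(X;\R)$ (Theorem~\ref{thm:Boltzmann}), which is indeed equivalent to $\partial^*_{E,W}\rho^B=0$ and hence $H\rho^B=0$, and then either the uniqueness clause of Theorem~\ref{bigthm:adiabatic} or the explicit formula \eqref{eqn:pert-unique} (with $\dot\rho^B=0$) forces $\xi\equiv 0$; your remark that $I-U(1,0)$ is invertible on $B_{d-1}(X;\R)$ for every $\tau_D>0$ by Lemma~\ref{lem:evol-bound} is also right, so no largeness hypothesis is needed in the constant case. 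What each approach buys: the paper's proof is a two-line formal manipulation that requires no knowledge of the solution and no appeal to the Boltzmann distribution, while yours yields the sharper facts that the stationary Boltzmann cycle is the periodic solution and that the current density itself vanishes identically, at the cost of the extra orthogonality verification and the bookkeeping from the adiabatic theorem.
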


\begin{proof}
  The weights appearing in $\cal A$ are time-independent since $\gamma$ is constant. 
  By Lemma~\ref{lem:current-K},
  \[
    Q = \int_0^1 \cal A( \dot \rho)\, dt 
    = \cal A( \int_0^1 \dot \rho\, dt) = \cal A(0) = 0 \, ,
  \]
  since $\rho$ is  1-periodic.
\end{proof}

\subsection{Quantization}
Current quantization occurs when the parameters are restricted to the generic
subspace of {\it good} parameters (compare \cite{Chernyak:algtop}). This space admits a decomposition
\[
\breve {\cal M}_X = U \cup V \, ,
\]
where $U$ denotes the subspace of parameters where $E$ is one-to-one,
and $V$ denotes the subspace where $W$ is one-to-one. Both $U$ and $V$ are
open subspaces. 

\begin{defn} Let $L \breve {\cal M}_X$ denote space of smooth unbased loops 
$\gamma\: [0,1]\to \breve {\cal M}_X$ in the Whitney $C^\infty$ topology. 
Such a $\gamma$ is called a {\it loop of good parameters}
and the pair $(\tau_D,\gamma)$ is called a {\it good driving protocol}.
\end{defn}

For a closed subinterval $I \subset [0,1]$,
the contribution along $I$ to the average current is given by the expression
\[
Q{\big|}_I = \int_{t\in I} {\mathbf J}(t)\, dt \, .
\]
We now choose a subdivision of $[0,1]$ such that the image of each segment under 
$\gamma$ lies in either $U$ or in $V$. More precisely, we choose
\[
0 = t_0 \le t_1 \le \cdots \le t_n = 1
\] 
a subdivision and set $I_j := [t_j,t_{j+1}]$. By taking the subdivision sufficiently fine and amalgamating
contiguous segments if necessary, we may assume that
\begin{enumerate}
  \item[(i)] $\gamma(I_j) \subset U$, or
  \item[(ii)] $\gamma(I_j) \subset V$ and $\gamma(\partial I_j) \subset U$,
\end{enumerate}
for every $j$.  The segments satisfying (i) are said to be of {\it type $U$}
and those satisfying (ii) are of {\it type $V$}. Then trivially
\begin{equation} \label{eqn:sum-decomp}
  Q \,\, =\, \, \sum_{k=0}^{n-1} \int_{I_k} {\mathbf J}(t)\, dt \,.
\end{equation}
Theorem~\ref{bigthm:adiabatic} implies 
$\lim_{\tau_D \to \infty} {\mathbf J}_{\tau_D} = \cal A( \dot \rho^B)$. 
Consequently, 
\begin{equation} \label{eqn:boltzmann-current-explicit}
  Q^B := \lim_{\tau_D \to \infty} Q(\tau_D,\beta) \,\, =\,\, 
  \int_0^1 \cal A( \dot \rho^B) \, dt\, .
\end{equation}


\begin{lem}
  Suppose that $I$ is of type $U$. In the low temperature limit,
  the contribution to $Q^B$ along $I$ is trivial.
\end{lem}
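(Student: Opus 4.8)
The plan is to combine the formula \eqref{eqn:boltzmann-current-explicit} for $Q^B$ with the low-temperature behaviour of the two operators appearing in the integrand. On a type $U$ segment $I$, the weight function $E$ is one-to-one for all $t \in I$, so Proposition~\ref{prop:low_temp_deriveboltz} applies and tells us that each component $\dot\rho^B_L$ of the time derivative of the Boltzmann distribution converges to $0$ uniformly in $t$ as $\beta \to \infty$. Hence $\dot\rho^B \to 0$ uniformly on $I$ in the low temperature limit.

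The key steps, in order, are as follows. First, I would write
\[
Q^B\big|_I \,\,=\,\, \int_I \cal A(\dot\rho^B)\, dt\, ,
\]
using \eqref{eqn:boltzmann-current-explicit} and Lemma~\ref{lem:current-K}. Second, I would bound the integrand pointwise: since $\cal A = \tfrac{1}{\Delta}\sum_T w_T \varsigma_T$ is, for each fixed $(E(t),W(t),\beta)$, a genuine linear operator on the finite-dimensional space $B_{d-1}(X;\R)$, its operator norm is some finite number $|\cal A(t,\beta)|$. The point is that this norm stays \emph{bounded} as $\beta \to \infty$: indeed $\cal A$ is the orthogonal splitting of $\partial$ with respect to $\langle-,-\rangle_{W(t)}$ (Theorem~\ref{thm:Kirchhoff_network}, Remark~\ref{rem:ortho-section-bdy}), and by Lemma~\ref{lem:low_temp_K} its low-temperature limit is $\varsigma_{T^\mu}$ when $W$ is one-to-one, so in any case $|\cal A|$ converges (or at least remains bounded on the compact parameter range traced out by $\gamma(I)$ together with all $\beta$ large). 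Third, combining this with the uniform bound $\sup_{t\in I}|\dot\rho^B(t)| \to 0$ from Proposition~\ref{prop:low_temp_deriveboltz}, I get
\[
\Big|\, Q^B\big|_I \,\Big| \,\,\le\,\, \int_I |\cal A(t,\beta)|\,|\dot\rho^B(t)|\, dt \,\,\le\,\, |I|\cdot \sup_{t\in I}|\cal A(t,\beta)|\cdot \sup_{t\in I}|\dot\rho^B(t)| \,\,\xrightarrow[\beta\to\infty]{}\,\, 0\, .
\]

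The main obstacle I anticipate is justifying that $|\cal A(t,\beta)|$ does not blow up as $\beta\to\infty$. The weights $w_T = \theta_T^2 \prod_{\alpha\in T_d} e^{-\beta W_\alpha}$ and the normalization $\Delta = \sum_T w_T$ both depend on $\beta$, and a priori individual ratios $w_T/\Delta$ could behave badly; one must check that the normalization tames this. The cleanest route is to invoke Lemma~\ref{lem:low_temp_K} directly on the sub-case of $I$ where $W$ is one-to-one and handle degenerate $W$ by a separate (easy) argument, or else observe that each ratio $w_T/\Delta$ lies in $[0,1]$ and $\|\varsigma_T\|$ is bounded independently of $\beta$ (since $\varsigma_T$ is defined purely combinatorially, independent of the weights), so that $|\cal A| \le \max_T \|\varsigma_T\|$ uniformly in $\beta$ and $t$. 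With that bound in hand, the estimate above closes the proof.
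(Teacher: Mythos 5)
Your proof is correct and follows essentially the same route as the paper: write $Q^B\big|_I = \int_I \cal A(\dot\rho^B)\, dt$ via Lemma~\ref{lem:current-K} and \eqref{eqn:boltzmann-current-explicit}, then use Proposition~\ref{prop:low_temp_deriveboltz} (valid since $E$ is one-to-one on a type $U$ segment) to get $\dot\rho^B \to 0$ uniformly. The paper simply asserts that $\cal A(\dot\rho^B)$ then tends to zero, whereas you also justify the uniform-in-$\beta$ bound $|\cal A| \le \max_T \|\varsigma_T\|$ (using $w_T/\Delta \in [0,1]$ and the fact that each $\varsigma_T$ is weight-independent), a detail the paper leaves implicit.
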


\begin{proof}
  By Lemma~\ref{lem:current-K} and \eqref{eqn:boltzmann-current-explicit}
  the average current along $I$ in the adiabatic limit is given 
  by 
  \[
   \int_I \cal A(\dot\rho^B) \, dt \,.
  \]
  Since $E$ is one-to-one on segments of type $U$, 
  Proposition~\ref{prop:low_temp_deriveboltz} implies that $\dot \rho^B \to 0$ uniformly
  in the low temperature limit. Consequently, $\cal A(\dot \rho)$ 
  also tends to zero.
\end{proof}

\begin{lem}\label{lem:current_V} 
  Suppose that $I=[u,v]$ is of type $V$. 
  In the low temperature limit, 
  the contribution to $Q^B$ along $I$ lies in 
  \[
    C_d(X; \Z[\tfrac{1}{\delta_I}]) \, ,
  \]
  where
  \[
  \delta_I\, :=\,  \theta_{T^\mu} a_{L^\mu(u)} a_{L^\mu(v)}\, ,
  \]
  in which
  \begin{itemize}
  \item $T^\mu$ is the unique minimal spanning tree on $I$\, ,
  \item $L^\mu(t)$ is the unique minimal 
spanning co-tree at $\gamma(t)$ for $t = u,v$, and
\item the integers $\theta_T$ and $a_L$ are defined in \S\ref{sec:combinatorial_struct}.
\end{itemize} 
\end{lem}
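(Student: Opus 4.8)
The plan is to evaluate the low‑temperature limit of the contribution $\int_I \mathcal{A}(\dot\rho^B)\,dt$ directly, exploiting that on a segment of type $V$ the only surviving term of $\mathcal{A}$ in the low‑temperature limit is the \emph{time‑independent} operator $\varsigma_{T^\mu}$, so that the integral telescopes to the difference of the values of $\rho^B$ at the two endpoints of $I$; at the endpoints $E$ is one‑to‑one, so Lemma~\ref{lem:low_temp_boltz} identifies those limiting values. A final bookkeeping of denominators then produces the ring $\mathbb{Z}[\tfrac{1}{\delta_I}]$.

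First I would set up the telescoping. By Lemma~\ref{lem:current-K} and \eqref{eqn:boltzmann-current-explicit} the contribution along $I$ is $\int_I \mathcal{A}_\beta(\dot\rho^B_\beta)\,dt$, where I write $\mathcal{A}_\beta(t)$ and $\rho^B_\beta(t)$ to record the dependence on $\beta$ and $t$. Note that $\mathcal{A}_\beta(t)$ depends on $t$ only through $W(t)$, and that $\dot\rho^B_\beta(t)\in B_{d-1}(X;\mathbb{R})$ since $\rho^B_\beta$ is a $1$‑parameter family of cycles in the fixed class $\bar z_0$. Because $W$ is one‑to‑one on all of the closed interval $I$ and $T^\mu$ is the minimal spanning tree at every $t\in I$, the proof of Lemma~\ref{lem:low_temp_K} yields a uniform estimate $\|\mathcal{A}_\beta(t)-\varsigma_{T^\mu}\|\le C_1 e^{-g_0\beta}$ on $I$, where $g_0:=\inf_{t\in I}\min_{T\ne T^\mu}\bigl(\sum_{\alpha\in T}W_\alpha(t)-\sum_{\alpha\in T^\mu}W_\alpha(t)\bigr)>0$ by compactness of $I$. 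Writing $\mathcal{A}_\beta=\varsigma_{T^\mu}+(\mathcal{A}_\beta-\varsigma_{T^\mu})$ and using that $\varsigma_{T^\mu}$ is $t$‑independent,
\[
  \int_I \mathcal{A}_\beta(\dot\rho^B_\beta)\,dt \;=\; \varsigma_{T^\mu}\bigl(\rho^B_\beta(v)-\rho^B_\beta(u)\bigr)\;+\;\int_I \bigl(\mathcal{A}_\beta-\varsigma_{T^\mu}\bigr)(\dot\rho^B_\beta)\,dt .
\]
Differentiating the formula of Theorem~\ref{thm:Boltzmann} (equivalently, from \eqref{eqn:deriv_rhoB}) and using that $\gamma$ is smooth on the compact interval $[0,1]$ gives $\|\dot\rho^B_\beta(t)\|\le C_2\beta$ uniformly in $t$, so the error term has norm at most $|I|\,C_1 C_2\,\beta e^{-g_0\beta}\to 0$. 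On the other hand $E$ is one‑to‑one at $u$ and $v$ (as $\gamma(\partial I)\subset U$), so Lemma~\ref{lem:low_temp_boltz} gives $\rho^B_\beta(u)\to\psi_{L^\mu(u)}(\bar z_0)$ and $\rho^B_\beta(v)\to\psi_{L^\mu(v)}(\bar z_0)$. By continuity of $\varsigma_{T^\mu}$ the limit exists and
\[
  \lim_{\beta\to\infty}\int_I \mathcal{A}_\beta(\dot\rho^B_\beta)\,dt \;=\; \varsigma_{T^\mu}(b), \qquad b:=\psi_{L^\mu(v)}(\bar z_0)-\psi_{L^\mu(u)}(\bar z_0)\in B_{d-1}(X;\mathbb{Q}),
\]
where $b$ is a boundary because each $\psi_L$ is a section of $Z_{d-1}(X;\mathbb{Q})\to H_{d-1}(X;\mathbb{Q})$.

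It remains to bound the denominators of $\varsigma_{T^\mu}(b)$. As in Remark~\ref{rem:D-coeffs} (the long exact sequence of the pair $(X,L)$), for $L=L^\mu(u)$ and $L=L^\mu(v)$ the chain $a_L\,\psi_L(\bar z_0)$ is an \emph{integral} cycle representing the integral class $a_L[z_0]$. Hence $a_{L^\mu(u)}a_{L^\mu(v)}\,b=a_{L^\mu(u)}\bigl(a_{L^\mu(v)}\psi_{L^\mu(v)}(\bar z_0)\bigr)-a_{L^\mu(v)}\bigl(a_{L^\mu(u)}\psi_{L^\mu(u)}(\bar z_0)\bigr)$ is an integral cycle whose integral homology class is $a_{L^\mu(u)}a_{L^\mu(v)}[z_0]-a_{L^\mu(v)}a_{L^\mu(u)}[z_0]=0$, so it is an integral boundary; equivalently $b\in B_{d-1}(X;\mathbb{Z}[\tfrac{1}{a_{L^\mu(u)}a_{L^\mu(v)}}])\subseteq B_{d-1}(X;\mathbb{Z}[\tfrac{1}{\delta_I}])$. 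Since $\theta_{T^\mu}$ is a unit in $\mathbb{Z}[\tfrac{1}{\delta_I}]$, Remark~\ref{rem:D-coeffs} applied to $\varsigma_{T^\mu}$ shows $\varsigma_{T^\mu}(b)\in C_d(T^\mu;\mathbb{Z}[\tfrac{1}{\delta_I}])\subseteq C_d(X;\mathbb{Z}[\tfrac{1}{\delta_I}])$, which is the assertion.

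The hard part is the denominator bookkeeping: one must check that no denominator beyond $\theta_{T^\mu}a_{L^\mu(u)}a_{L^\mu(v)}$ is introduced, and in particular that $b$ — a priori only a \emph{rational} boundary — is already a boundary over $\mathbb{Z}[\tfrac{1}{a_{L^\mu(u)}a_{L^\mu(v)}}]$. The apparent obstruction coming from torsion in $H_{d-1}(X;\mathbb{Z})$ disappears because $b$ is a difference of two integral cycles (after clearing $a_{L^\mu(u)}$ and $a_{L^\mu(v)}$) representing the \emph{same} integral class; concretely, $|H_{d-1}(X;\mathbb{Z})_{\mathrm{tors}}|$ divides every $a_L$, again by the long exact sequence of $(X,L)$. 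The analytic portion (the telescoping and the vanishing of the error term) is routine once one notices that $\mathcal{A}_\beta-\varsigma_{T^\mu}$ decays exponentially in $\beta$ while $\dot\rho^B_\beta$ grows at most linearly; one mild point to record is that the subdivision may have to be refined so that the minimizing spanning tree is genuinely constant on $I$.
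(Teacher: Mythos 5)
Your proof is correct and follows essentially the same route as the paper's: expressing the contribution as $\int_I \mathcal{A}(\dot\rho^B)\,dt$ via Lemma~\ref{lem:current-K}, using the uniform convergence $\mathcal{A}\to\varsigma_{T^\mu}$ from Lemma~\ref{lem:low_temp_K} to reduce to $\varsigma_{T^\mu}\bigl(\psi_{L^\mu(v)}-\psi_{L^\mu(u)}\bigr)[z_0]$ via Lemma~\ref{lem:low_temp_boltz} at the endpoints, and then invoking Remark~\ref{rem:D-coeffs} for the denominators. Your added quantitative error estimate (exponential decay of $\mathcal{A}_\beta-\varsigma_{T^\mu}$ against the $O(\beta)$ growth of $\dot\rho^B_\beta$) and the explicit argument that $a_{L^\mu(u)}a_{L^\mu(v)}$ clears the denominators of the boundary term only make explicit what the paper leaves implicit.
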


\begin{proof} 
By Lemma~\ref{lem:current-K}, the average current along $I$ in the adiabatic limit is given 
  by the expression  
   \[
 \int_I \cal A(\dot\rho^B) \, dt\, .
  \]
  Since $I$ is of type $V$, Lemma~\ref{lem:low_temp_K} implies that $\cal A \to \varsigma_T^\mu$ uniformly on $I$ as
  $\beta \to \infty$. 
  Therefore, the contribution to the low temperature limit of the Boltzmann current along $I$ is given by
  \begin{align*}
    \lim_{\beta \to \infty} Q^B_{\beta}{\big|}_I &=
     \lim_{\beta \to \infty} \int_u^v \cal A(\dot \rho^B)\, dt \, ,\\
    &= \varsigma_T^\mu \left( \lim_{\beta \to \infty} \int_u^v \dot \rho^B
   \, dt \right) \, ,\\
    &= \varsigma_T^\mu (\psi_{L^\mu(v)} - \psi_{L^\mu(u)} )[z_0] \,,\quad \text{by Lemma } \ref{lem:low_temp_boltz}. 
  \end{align*}
Note that the difference $\psi_{L^\mu(v)} - \psi_{L^\mu(u)}$
 takes image in  $B_{d-1}(X; \Z[\tfrac{1}{\delta_I}])$
  since its projection to $H_{d-1}(X; \Z[\tfrac{1}{\delta_I}])$ is trivial. Hence, the 
 displayed composition makes sense.
  
  By Remark~\ref{rem:D-coeffs}, we have a well-defined homomorphism
  \[
  \varsigma_{T^\mu} : B_{d-1}(X;\Z[\tfrac{1}{\delta_I}]) \to 
  C_d(X;\Z[\tfrac{1}{\delta_I}]) \, .
\]
Similarly, the same remark shows that the 
  difference 
  \[
    \psi_{L^\mu(v)} - \psi_{L^\mu(u)} \:   
    H_{d-1}(X; \Z[\tfrac{1}{\delta_I}]) \to B_{d-1}(X; \Z[\tfrac{1}{\delta_I}])  \, .
  \]
  is well-defined. Consequently, $\varsigma_T^\mu (\psi_{L^\mu(v)} - \psi_{L^\mu(u)} )$
is defined as a homomorphism  $H_{d-1}(X; \Z[\tfrac{1}{\delta_I}]) \to C_d(X;\Z[\tfrac{1}{\delta_I}])$. Applying this homomorphism to $[z_0]$ gives the conclusion. 
  \end{proof}

The following is now a straightforward consequence of the previous two lemmas
together with Remark~\ref{rem:D-coeffs}.

\begin{thm}[Quantization]\label{thm:quantization} Let $X$ be finite connected 
CW complex
$X$ of dimension $d$. Let $(\tau_D,\gamma)$ be a good driving protocol.

Then the low temperature, adiabatic limit of the average current $Q$ of 
$(\tau_D,\gamma)$ is well-defined and 
lies in  the fractional lattice 
\[
  H_d(X; \Z[\tfrac{1}{\delta}]) \subset H_d(X; \Bbb R)\, ,
\]
in which 
\[
  \delta :=   \prod_{L,T} a_L\theta_T  \, ,
  \]
 where $L$ ranges over all spanning co-trees in dimension $d-1$ and $T$ ranges
  over all spanning trees in dimension $d$.
\end{thm}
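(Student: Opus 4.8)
The plan is to assemble the statement from the two preceding lemmas after first passing to the adiabatic limit, and then to do a little bookkeeping with coefficient rings. First I would fix $\beta$ and let $\tau_D\to\infty$: by Theorem~\ref{bigthm:adiabatic} and the discussion around Eq.~\eqref{eqn:boltzmann-current-explicit}, the limit $Q^B=\lim_{\tau_D\to\infty}Q(\tau_D,\beta)$ exists and equals $\int_0^1\cal A(\dot\rho^B)\,dt$. Then, using the subdivision $0=t_0\le\cdots\le t_n=1$ into segments of type $U$ and type $V$ constructed just before \eqref{eqn:sum-decomp}, I would decompose $Q^B=\sum_{k}Q^B{\big|}_{I_k}$ into finitely many pieces, one per segment.

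Next I would take $\beta\to\infty$ piece by piece. For a segment of type $U$, the lemma on type-$U$ segments shows the contribution to $Q^B$ tends to $0$. For a segment $I=[u,v]$ of type $V$, Lemma~\ref{lem:current_V} shows the contribution tends to an element of $C_d(X;\Z[\tfrac1{\delta_I}])$, where $\delta_I=\theta_{T^\mu}\,a_{L^\mu(u)}\,a_{L^\mu(v)}$. Since $T^\mu$ is one of the spanning trees of $X$ and $L^\mu(u),L^\mu(v)$ are among its spanning co-trees, $\delta_I$ divides $\delta=\prod_{L,T}a_L\theta_T$, hence $\Z[\tfrac1{\delta_I}]\subset\Z[\tfrac1\delta]$ and every contribution lies in $C_d(X;\Z[\tfrac1\delta])$. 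As the sum over $k$ is finite, $\lim_{\beta\to\infty}Q^B_\beta$ exists and lies in $C_d(X;\Z[\tfrac1\delta])$.

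It remains to identify this with a class in $H_d(X;\Z[\tfrac1\delta])$. The element $\lim_{\beta\to\infty}Q^B_\beta$ is a real $d$-cycle, being a limit of the $d$-cycles $Q(\tau_D,\beta)$ from \eqref{eqn:average_cur}; since $\partial$ is defined over $\Z$, a chain in $C_d(X;\Z[\tfrac1\delta])$ that is a real cycle is already a $\Z[\tfrac1\delta]$-cycle, and because $\dim X=d$ there are no $d$-boundaries, so $Z_d(X;\Z[\tfrac1\delta])=H_d(X;\Z[\tfrac1\delta])$. As $\Z[\tfrac1\delta]$ is a PID, this is a free module and therefore injects into $H_d(X;\R)$, which realizes $\lim_{\beta\to\infty}Q^B_\beta$ as an element of $H_d(X;\Z[\tfrac1\delta])\subset H_d(X;\R)$, as desired.

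Essentially all the analytic work is already done in the earlier lemmas — in particular Lemmas~\ref{lem:low_temp_boltz}, \ref{lem:low_temp_K} and \ref{lem:current_V} — so the main obstacle remaining is purely combinatorial and organizational: checking the divisibility $\delta_I\mid\delta$ so that the coefficient rings nest correctly across all type-$V$ segments, and making sure the limit genuinely lands in $\Z[\tfrac1\delta]$-homology rather than merely converging somewhere in $H_d(X;\R)$.
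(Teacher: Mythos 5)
Your proposal is correct and follows essentially the same route as the paper: the paper obtains Theorem~\ref{thm:quantization} as a direct consequence of the decomposition \eqref{eqn:sum-decomp} into type-$U$ and type-$V$ segments, the two preceding lemmas, and Remark~\ref{rem:D-coeffs}, which is exactly your assembly (your explicit checks that the rings $\Z[\tfrac1{\delta_I}]$ nest inside $\Z[\tfrac1\delta]$ and that the limit is a $d$-cycle, hence a class in $H_d(X;\Z[\tfrac1\delta])$ since there are no $d$-boundaries, are just the details the paper leaves as ``straightforward''). The only nitpick is that one does not need $\delta_I\mid\delta$ literally, only that every factor of $\delta_I$ divides $\delta$, which suffices for $\Z[\tfrac1{\delta_I}]\subset\Z[\tfrac1\delta]$.
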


\begin{rem} As in \cite{CCK:Kirchhoff} and \cite{CCK:Boltzmann},
the factors appearing in $\delta$ have a
geometric and combinatorial significance. \end{rem}

\begin{rem}
  The space of good parameters can be extended to a space of {\it robust
  parameters} and one still obtains a quantization of the average current, as was
  done for graphs in~\cite{Chernyak:algtop}. This will be explained in future
  work~\cite{CCK:Hypercurrents}.
\end{rem}

\section{The example \label{sec:example}}
 Let $X$ and $\gamma = (E_\bullet,W_\bullet)$ be as in Theorem \ref{bigthm:ex}, where $\gamma$ is $1$-periodic. 
Figure \ref{fig:weights} indicates the ordering of the weights over the unit interval:

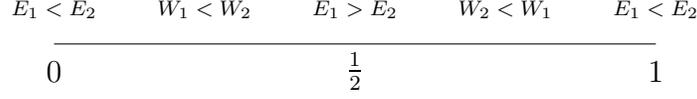
\begin{figure}[h]
  \centering
  \begin{tikzpicture}
    \draw (0,0) -- (8,0);
      \node [label={$0$}] at (0,-0.8) {};
    \node [label={$\tfrac{1}{2}$}] at (4,-0.9) {};
         \node [label={$1$}] at (8,-0.8) {};
        \node [label={\tiny $W_1 < W_2$}] at (2,0.05) {}; 
         \node [label={\tiny $W_2 < W_1$}] at (6,0.05) {}; 
         \node [label={\tiny $E_1 > E_2$}] at (4,0.05) {}; 
       \node [label={\tiny $E_1 < E_2$}] at (0,0.05) {}; 
              \node [label={\tiny $E_1 < E_2$}] at (8,0.05) {}; 
  \end{tikzpicture}
  \caption{The ordering of the weights over the unit interval}
  \label{fig:weights}
  \end{figure}
 Let $L_i\subset X$ be the $(d-1)$-sphere determined by 
 the $(d-1)$-cell $f_i$. Then $L_1,L_2$ are the spanning co-trees of $X$. Let $T_i \subset X$ be the spanning tree of $X$ given by attaching the $d$-cell $e_i$
 to the $(d-1)$-skeleton.
 
  The ordering $E_1 < E_2$ associates the spanning co-tree $L_1$ at $t=0,1$. The ordering $E_2 >E_1$ associates the spanning co-tree $L_2$ at $t=1/2$.
The ordering $W_1 < W_2$ associates the spanning tree $T_1$ on $(0,1/2)$ and the ordering $W_2 < W_1$ associates the spanning tree $T_2$ on $(1/2,1)$. Figure \ref{fig:co-tree-tree-over-time} gives the corresponding schematic with
spanning tree/co-tree labels replacing the inequalities of weights.

 \begin{figure}[h]
  \centering
  \begin{tikzpicture}
    \draw (0,0) -- (8,0);
      \node [label={$0$}] at (0,-0.8) {};
    \node [label={$\tfrac{1}{2}$}] at (4,-0.9) {};
         \node [label={$1$}] at (8,-0.8) {};
        \node [label={\tiny $T_1$}] at (2,0.05) {}; 
         \node [label={\tiny $T_2$}] at (6,0.05) {}; 
         \node [label={\tiny $L_2$}] at (4,0.05) {}; 
       \node [label={\tiny $L_1$}] at (0,0.05) {}; 
              \node [label={\tiny $L_1$}] at (8,0.05) {}; 
  \end{tikzpicture}
  \caption{The unit interval with spanning tree/co-tree labels}
  \label{fig:co-tree-tree-over-time}
  \end{figure}
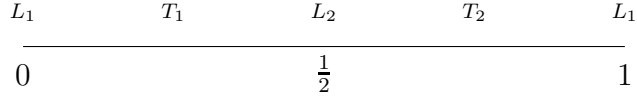

For the rest of the argument, we assume homology 
is taken with integer coefficients.
The generator of $H_{d-1}(X)$ is given by $f_i$ lying in $H_{d-1}(L_i) = Z_{d-1}(L_i)$, and by the definition of spanning co-tree these are uniquely defined.  
By the definition of spanning tree, there is a unique $d$-chain $e_1\in H_d(T_1) \subset C_d(T_1)$
 which bounds the difference $f_2-f_1  \in C_{d-1}(X)$ along $[0,1/2]$. Similarly,
 $-e_2\in H_d(T_2)$ uniquely bounds the difference $f_1-f_2$ 
 along $[1/2,1]$.  
 Then using equation \eqref{eqn:sum-decomp} and following the proof of Theorem \ref{bigthm:quant}, the average current in the low temperature, adiabatic limit is given by the sum of the two bounding $d$-chains, i.e.,
$e_1 + (-e_2) = e_1 - e_2$.  \qed

 \begin{rem}  In the case of the example,
 the space of parameters $\cal M_X$ is a real vector space of dimension
 four. The topological subspace of ``bad'' parameters has codimension two.  The low temperature adiabatic limit of the average current can be interpreted as the linking number of the good driving protocol $\gamma\: S^1 \to \cal M_X$ with the subspace of bad parameters.  
 \end{rem}

\end{document}